\newcommand*\rel@kern[1]{\kern#1\dimexpr\macc@kerna}
\newcommand*\widebar[1]{%
  \begingroup
  \def\mathaccent##1##2{%
    \rel@kern{0.8}%
    \overline{\rel@kern{-0.8}\macc@nucleus\rel@kern{0.2}}%
    \rel@kern{-0.2}%
  }%
  \macc@depth\@ne
  \let\math@bgroup\@empty \let\math@egroup\macc@set@skewchar
  \mathsurround\z@ \frozen@everymath{\mathgroup\macc@group\relax}%
  \macc@set@skewchar\relax
  \let\mathaccentV\macc@nested@a
  \macc@nested@a\relax111{#1}%
  \endgroup
}
\DeclareMathOperator*{\argmax}{argmax}
\DeclareMathOperator*{\argmin}{argmin}
\pgfplotsset{compat=1.9}
\renewcommand{\leq}{\leqslant}
\renewcommand{\geq}{\geqslant}
\pgfplotsset{compat=1.3}		
\newcommand{\RR}{{\mathbb{R}}}
\newcommand{\CC}{{\mathbb{C}}}
\newtheorem{Prop}[theorem]{Proposition}
\newtheorem{Cor}[theorem]{Corollary}
\newtheorem{num_example}{Example}
\newtheorem{Ass}[theorem]{Assumption}
\begin{document}

\title{The projected Newton-Kleinman method for the algebraic Riccati equation
\thanks{Version of \today.}
}


\author{Davide Palitta
}


\institute{ Davide Palitta \at Research Group Computational Methods in Systems
and Control Theory (CSC),
Max Planck Institute for Dynamics of Complex Technical Systems, Sandtorstra\ss{e} 1, 39106 Magdeburg, Germany\\ \email{palitta@mpi-magdeburg.mpg.de}            
}

\date{Received: date / Accepted: date}

\maketitle
\bibliographystyle{siam}

\begin{abstract}
The numerical solution of the algebraic Riccati equation is a challenging task especially for very large problem dimensions. In this paper we present a new algorithm that combines the very appealing computational features of projection methods with the convergence properties of the inexact Newton-Kleinman procedure equipped with a line search. In particular, the Newton scheme is completely merged in a projection framework with a single approximation space so that the Newton-Kleinman iteration is only implicitly performed. Moreover, the line search 
turns out to be exact in our setting, i.e., the existence of a local minimum of the Riccati residual norm along the current search direction is guaranteed and the corresponding minimizer is chosen as step-size.
This property determines a monotone decrease of the Riccati residual norm under some mild assumptions.
Several numerical results are reported to illustrate the potential of our novel approach.

\keywords{
Riccati equation \and Newton-Kleinman method \and projection methods \and large-scale matrix equations
}

\subclass{65F30 \and 15A24 \and 49M15 \and 39B42 \and 40C05}

\end{abstract}


\section{Introduction}
We are interested in the numerical solution of the algebraic Riccati equation\footnote{The Riccati equation
is usually reported as $A^TX+XA-XBB^TX+C^TC=0$. Here we prefer to use $A$ in place of $A^T$ for sake of simplicity in 
the presentation of the Krylov subspace approach.}
\begin{equation}\label{eq.Riccati}
 \mathcal{R}(X):=AX+XA^T-XBB^TX+C^TC=0,
\end{equation}
where $A\in\RR^{n\times n}$ is of very large dimension, and $B\in\RR^{n\times p}$ and $C\in\RR^{q\times n}$ are such that
$p+q\ll n$.
This equation is of great interest in many applications, such as linear-quadratic optimal control problems for parabolic PDEs and
balancing based model order reduction of large linear systems. See, e.g., \cite{Antoulas.05,Morris2005}.

The solution $X$ to \eqref{eq.Riccati} is usually dense and it cannot be stored in case of large scale problems.
Under certain assumptions on the coefficient matrices, the singular values of the solution present a very fast decay and the symmetric positive semidefinite matrix 
$X$ can thus be well-approximated by a low rank matrix $SS^T\approx X$, $S\in\RR^{n\times t}$, $t\ll n$, so that only the low-rank factor $S$ needs to be computed and stored.
See, e.g., \cite{Benner2016a}.

Many efficient numerical methods for the solution of \eqref{eq.Riccati} have been developed in the last decades. For instance, the Newton-Kleinman
method and many of its variants \cite{Kleinman1968, Feitzinger2009,Benner2016,Benner2010, Benner1998}, 
projection methods \cite{Simoncini2014,Simoncini2016a, Jbilou2003,Heyouni2008/09},
subspace iteration methods \cite{Lin2015, Amodei2010, Benner2016a} and the very recent RADI \cite{Benner2018}. See also the survey article \cite{Benner2018a}. 

In this paper we focus on the inexact version of the Newton-Kleinman method where, at each iteration, 
a Lyapunov matrix equation needs to be solved.
In standard implementations, these linear equations are tackled independently from each other so that
the solution of the $(k+1)$-th Lyapunov equation does not exploit the information generated for computing the solution
of the previous ones at all.
Here we show how the solution of the $(k+1)$-th equation can actually profit from the computational efforts made to solve the first $k$ ones. In particular, all the Lyapunov equations of the Newton-Kleinman scheme can be solved by employing the same 
approximation space and this observation leads to a remarkable speed-up of the entire algorithm as the Newton-Kleinman iteration is only implicitly performed, while
maintaining the convergence properties of the original method. 

The most common approximation spaces used in the solution of matrix equations by projection are the extended Krylov subspace
\begin{align}\label{def.extended}
\mathbf{EK}_m^\square(A,C^T):=&\mbox{Range}([C^T,A^{-1}C^T,AC^T,A^{-2}C^T,\ldots,A^{m-1}C^T,A^{-m}C^T])\notag\\
=& \mathbf{K}_m^\square(A,C^T)\cup\mathbf{K}_m^\square(A^{-1},A^{-1}C^T), 
\end{align}
where $\mathbf{K}_m^\square(A,C^T)=\mbox{Range}([C^T,AC^T,\ldots,A^{m-1}C^T])$ is the (standard) block Krylov subspace,
see, e.g., \cite{Simoncini2007,Knizhnerman2011}, and the more general rational Krylov subspace
\begin{equation}\label{def.rational}
\mathbf{K}_m^\square(A,C^T,\mathbf{s}):=\mbox{Range}([C^T,(A-s_2I)^{-1}C^T,\ldots,\prod_{i=2}^m(A-s_iI)^{-1}C^T]), 
\end{equation}
where $\mathbf{s}=[s_2,\ldots,s_m]^T\in\CC^{m-1}$. See, e.g., \cite{Druskin2011a,Druskin2011,Druskin2014}. We thus consider
only these spaces in our analysis.

The following is a synopsis of the paper. In section~\ref{The Newton-Kleinman method} we revisit the 
Newton-Kleinman method and its inexact variant presented in \cite{Benner2016}. In section~\ref{A new iterative framework}
we show that all the iterates computed by these algorithms lie on the same subspace whose definition depends on
the choice of the initial guess $X_0$ in the Newton sequence.
In particular, in section~\ref{The extended Krylov subspace} we present the main result of the paper 
and the complete implementation of the new iterative procedure is 
 illustrated in section~\ref{Implementation details}. 
For the sake of simplicity, only the extended Krylov subspace \eqref{def.extended}
is considered in the discussion presented in section~\ref{The extended Krylov subspace}-\ref{Implementation details} but in 
section~\ref{The Rational Krylov subspace} we show how to easily adapt our new strategy when the rational Krylov subspace 
\eqref{def.rational} is adopted as approximation space.
Some novel results about the stabilizing properties of the computed solution are presented in section~\ref{Convergence results}. In particular, we show that, under certain assumptions, the matrix $A-X_kBB^T$ is stable for all $k$, where $X_k$ denotes the $k$-th iterate in the Newton sequence.
In section~\ref{Numerical examples} several numerical examples illustrate the effectiveness of the novel framework
and our conclusions are given in section~\ref{Conclusions}.

Throughout the paper we adopt the following notation.
 The matrix inner product is
defined as $\langle X, Y \rangle_F ∶= \mbox{trace}(Y^T X)$ so that the induced norm is $\|X\|_F^2= \langle X, X\rangle_F$. The Kronecker product is denoted by $\otimes$
while $I_n$ and $O_{n\times m}$ denote the identity matrix of order $n$ and the $n\times m$ zero matrix  respectively. Only one subscript is used for a square zero matrix, i.e., $O_{n\times n}=O_n$, and the subscript
is omitted whenever the dimension of $I$ and $O$ is clear from the context. Moreover, $E_i$ will denote the $i$-th block of $\ell$ columns of an identity matrix whose dimension depends on the adopted approximation space. More precisely, when the extended Krylov subspace~\eqref{def.extended} is employed, $\ell=2q$ and $E_i\in\RR^{2qm\times 2q}$ while $\ell=q$, $E_i\in\RR^{qm\times q}$, when the rational Krylov subspace~\eqref{def.rational} is selected.   
The brackets $[\cdot]$ are used to concatenate matrices of conformal dimensions. In particular, a Matlab-like
notation is adopted and $[M,N]$ denotes the matrix obtained by putting $M$ on the left of $N$ whereas $[M;N]$ the one obtained by putting $M$ on top of $N$, i.e., $[M;N]=[M^T,N^T]^T$.
The notation $\text{diag}(M,N)$ is used to denote the block diagonal matrix with diagonal blocks $M$ and $N$ and we write $A<0$ if the matrix $A$ is negative definite, i.e., if its field of values $W(A):=\{\lambda\in\mathbb{C} \text{ s.t. }
z^*(A-\lambda I)z=0,\;z\in\mathbb{C}^n,\;\|z\|_F=1\},$ $z^*$ conjugate transpose of $z$,
is contained in the open left half plane $\mathbb{C}_-$.

Denoting by $\mathcal{K}_m$ a suitable space\footnote{$\mathcal{K}_m$ as in \eqref{def.extended} or \eqref{def.rational}.}, we will always assume that a matrix $V_m\in\RR^{n\times \ell}$, $\mbox{Range}(V_m)=\mathcal{K}_m$, has orthonormal columns and it is of full rank so that $\mbox{dim}(\mathcal{K}_m)=\ell$. Indeed, if this is not the case, deflation strategies to overcome the possible linear dependence of the spanning vectors can be adopted as it is customary in block Krylov methods. See, e.g., \cite[Section 8]{Gutknecht2006}.


\section{The (inexact) Newton-Kleinman method}\label{The Newton-Kleinman method}
In this section we recall the Newton-Kleinman method and its inexact counterpart for the solution of~\eqref{eq.Riccati}. 

\begin{definition} Let $A\in\RR^{n\times n}$, $B\in\RR^{n\times p}$, and $C\in\RR^{q\times n}$. The pair $(A,B)$ is called stabilizable
 if there exists a feedback matrix $K\in\RR^{n\times p}$ such that $A-KB^T$ is stable, i.e., all the eigenvalues of $A-KB^T$ lie on the open left half complex plane $\CC_-$. The pair $(C,A)$ is called detectable if $(A^T,C^T)$ is stabilizable\footnote{Due to a different formulation of equation~\eqref{eq.Riccati}, an alternative, though mathematically equivalent, form of Definition 1 is usually adopted. See, e.g., \cite{Lancaster1995}.}
.
\end{definition}
 What follows is an assumption that will hold throughout the paper.
\begin{Ass}\label{main_assumption}
 The coefficient matrices $A\in\RR^{n\times n}$, $B\in\RR^{n\times p}$, and $C\in\RR^{q\times n}$ in \eqref{eq.Riccati}
 are such that $(A,B)$ is stabilizable and $(C,A)$ detectable.
\end{Ass}
If Assumption~\ref{main_assumption} holds, there exists a unique symmetric positive semidefinite solution $X$ to 
\eqref{eq.Riccati} which is also the unique stabilizing solution, i.e., $X$ is such that the matrix $A-XBB^T$ is stable.
 See, e.g., \cite{Lancaster1995}.

For a given $X_0$ such that $A-X_0BB^T$ is stable\footnote{Such an $X_0$ exists thanks to Assumption~\ref{main_assumption}.}, the $(k+1)$-th iteration of the Newton method is defined as 
$$\mathcal{R}'[X](X_{k+1}-X_k)=-\mathcal{R}(X_k),\quad k\geq0,$$
where $\mathcal{R}'[X]$ denotes the Fr\'{e}chet derivative of $\mathcal{R}$ at a given symmetric $X$. For the 
Riccati operator we have
$$\mathcal{R}'[X](Y)=AY+YA^T-YBB^TX-XBB^TY=(A-XBB^T)Y+Y(A-XBB^T)^T,$$
and the $(k+1)$-th iterate of the Newton-Kleinman method is thus given by the solution of the Lyapunov equation
\begin{equation}\label{Newton_step}
(A-X_kBB^T)X_{k+1}+X_{k+1}(A-X_{k}BB^T)^T=-X_{k}BB^TX_{k}-C^TC. 
\end{equation}
If the Lyapunov equations~\eqref{Newton_step} are solved exactly, the Newton-Kleinman method computes a sequence of symmetric
positive semidefinite matrices $\{X_k\}_{k\geq0}$ such that $X_k\geq X_{k+1}$ for any $k\geq 1$. Moreover, $\{X_k\}_{k\geq0}$ converges quadratically to the stabilizing solution $X$. 
See, e.g., \cite{Kleinman1968}. Furthermore, Benner and Byers showed in \cite{Benner1998}
that a line search can significantly improve 
the performance of the Newton-Kleinman method during the first Newton steps.

In our setting, due to the large problem dimensions, equations~\eqref{Newton_step} have to be iteratively solved
by one of the many efficient methods for Lyapunov equations 
present in the literature like projection methods \cite{Druskin2011, Simoncini2007},
low-rank ADI \cite{benner2008numerical, Li2004} or low-rank sign-function 
method \cite{Baur2008, Baur2006}. See also \cite{Simoncini2016} and the references therein.

The iterative solution of \eqref{Newton_step} introduces some inexactness in the Newton scheme, 
 and this leads to the so-called inexact Newton-Kleinman method whose
convergence -- under some suitable assumptions -- has been proved in \cite{Feitzinger2009}. However, the conditions considered in \cite{Feitzinger2009} 
seem difficult to meet in practice and in \cite{Benner2016} the authors showed 
that a specific line search guarantees the convergence of the inexact Newton-Kleinman scheme.

 The inexact Newton-Kleinman method with line search reads as follows.
Given a symmetric $X_k$, $\alpha>0$, $\eta_k\in(0,1)$, we want to compute a matrix $Z_{k}$ such that
\begin{equation}\label{residual_step_line_search}
\|\mathcal{R}'[X_k](Z_k)+\mathcal{R}(X_k)\|_F\leq\eta_k\|\mathcal{R}(X_k)\|_F. 
\end{equation}
The $(k+1)$-th iterate is then defined as 
\begin{equation}\label{Newton_step_linesearch}
 X_{k+1}=X_k+\lambda_kZ_k,
\end{equation}
 where the step size $\lambda_k>0$ is such that 
\begin{equation}\label{decrease_cond}
\|\mathcal{R}(X_k+\lambda_kZ_k)\|_F\leq(1-\lambda_k\alpha)\|\mathcal{R}(X_k)\|_F, 
\end{equation}
without $\lambda_k$ being too small.

 We thus need a procedure to compute the update $Z_k$ and we can proceed as follows. If $L_{k+1}:=\mathcal{R}'[X_k](Z_k)+\mathcal{R}(X_k)$, equation \eqref{residual_step_line_search} is equivalent to $\|L_{k+1}\|_F\leq\eta_k\|\mathcal{R}(X_k)\|_F$ and we can write
$$L_{k+1}=(A-X_kBB^T)(X_k+Z_k)+(X_k+Z_k)(A-X_kBB^T)^T+X_kBB^TX_k+C^TC.$$
This means that the matrix $\widetilde X_{k+1}:=X_k+Z_k$ is the solution of the Lyapunov equation
\begin{equation}\label{Newton_step_inexact}
 (A-X_kBB^T)\widetilde X_{k+1}+\widetilde X_{k+1}(A-X_{k}BB^T)^T=-X_{k}BB^TX_{k}-C^TC+L_{k+1}. 
\end{equation}
Clearly the matrix $L_{k+1}$ is never computed explicitly and the notation in \eqref{Newton_step_inexact} is used only to indicate that $\widetilde X_{k+1}$ is an inexact solution to equation \eqref{Newton_step} such that the residual norm $\|L_{k+1}\|_F$ satisfies \eqref{residual_step_line_search}. Once $\widetilde X_{k+1}$ is computed, we recover $Z_k$ by $Z_k=\widetilde X_{k+1}-X_k$.
 
We now want to compute a step-size $\lambda_k>0$ such that \eqref{decrease_cond} holds and define $X_{k+1}$ as in \eqref{Newton_step_linesearch}. 
The Riccati residual at $X_{k+1}=X_k+\lambda_kZ_k$ can be expressed as 
$$\mathcal{R}(X_k+\lambda_kZ_k)=(1-\lambda_k)\mathcal{R}(X_k)+\lambda_kL_{k+1}-\lambda_k^2Z_kBB^TZ_k,$$
so that, if $\eta_k\leq\widebar \eta<1$ and $\alpha\in (0,1-\widebar \eta)$, the sufficient decrease condition \eqref{decrease_cond} is satisfied for every $\lambda_k\in(0,(1-\alpha-\widebar \eta)\cdot\frac{\|\mathcal{R}(X_k)\|_F}{\|Z_kBB^TZ_k\|_F}]$.

In \cite{Benner2016}, two choices for the forcing parameter $\eta_k$ and for the actual computation 
of the step size $\lambda_k$ are proposed and in \cite[Theorem 10]{Benner2016} the authors showed the convergence of the inexact iterative scheme.

The Newton-Kleinman method can be formulated in different ways. For instance,  in the exact setting, if $X_1$ satisfies the equation $(A-X_0BB^T) X_1+ X_1(A-X_0BB^T)^T+C^TC=0$, then a matrix $\delta X_k$ given by the solution of the Lyapunov equation
\begin{equation}\label{reformulation}
 (A-X_kBB^T)\delta X_k+ \delta X_k(A-X_kBB^T)^T=\delta X_{k-1}BB^T \delta X_{k-1},\quad k\geq 1,\quad \delta X_0=X_1-X_0,
\end{equation}
can be computed; the next iterate of the Newton-Kleinman scheme is then defined as $X_{k+1}=X_k+\delta X_k$.
See, e.g., \cite{Banks1991}. In the recent literature, this reformulation has been shown to be very appealing when $C^TC$ is supposed to be a \emph{hierarchical} matrix and not simply low-rank. See \cite{Kressner2019}.
In our problem setting, the solution of \eqref{reformulation} may be computationally advantageous if $p$ is significantly smaller than $q$. However, 
we prefer to deal with equations of the form \eqref{Newton_step} as suggested in \cite{Feitzinger2009,Benner2016}.

In Algorithm~\ref{iNK_algorithm} the inexact Newton-Kleinman method is summarized.
\begin{algorithm}
\caption{Inexact Newton-Kleinman method with line search.\label{iNK_algorithm}}
\SetKwInOut{Input}{input}\SetKwInOut{Output}{output}
\Input{$A\in\mathbb{R}^{n\times n},$ $B\in\RR^{n\times p}$, $C\in\RR^{q\times n}$, 
$X_0\in\mathbb{R}^{n\times n}$, s.t. $A-X_0BB^T$ is stable, $\epsilon>0$, $\widebar \eta\in(0,1)$, $\alpha\in(0,1-\widebar \eta).$}
\Output{$X_k\in\mathbb{R}^{n\times n}$ approximate solution to \eqref{eq.Riccati}.}
\BlankLine
\For{$k = 0,1,\dots,$ till convergence}{
 \If{$\|\mathcal{R}(X_k)\|_F<\epsilon\cdot\|\mathcal{R}(X_0)\|_F$}{ 
\nl \textbf{Stop} and return $X_k$}
\nl Select $\eta_k\in(0,\widebar \eta)$\\
\nl Compute $\widetilde X_{k+1}$ s.t. 
$$(A-X_kBB^T)\widetilde X_{k+1}+\widetilde X_{k+1}(A-X_{k}BB^T)^T=-X_{k}BB^TX_{k}-C^TC+L_{k+1}$$ 
where $\|L_{k+1}\|_F\leq\eta_k\|\mathcal{R}(X_k)\|_F$ \label{Alg.Lyap.solves}\\
\nl Set $Z_k=\widetilde X_{k+1}-X_k$\\
\nl Compute $\lambda_k>0$ s.t. $\|\mathcal{R}(X_k+\lambda_kZ_k)\|_F\leq(1-\lambda_k\alpha)\|\mathcal{R}(X_k)\|_F$\\
\nl Set $X_{k+1}=X_k+\lambda_kZ_k$
}
\end{algorithm}

The effectiveness of Algorithm~\ref{iNK_algorithm} is strictly related to the efficiency of the Lyapunov solves 
in line~\ref{Alg.Lyap.solves}. In \cite{Benner2016}, the low-rank ADI method is employed for solving the equations in
\eqref{Newton_step} whereas in \cite{Simoncini2014} some numerical results are reported where the extended Krylov subspace method
(EKSM) is used as inner solver. 
See \cite{Simoncini2007} for an implementation of EKSM called K-PIK.
In both cases the inexact Newton-Kleinman method is not 
competitive when compared to other methods as the Newton-Kleinman method with Galerkin acceleration \cite{Benner2010}, 
projection methods \cite{Simoncini2014} and the very recent RADI \cite{Benner2018}. See, e.g., \cite{Benner2018a}.
As already mentioned, the main disadvantage of the inexact Newton-Kleinman scheme is that, at each iteration $k+1$, equation
\eqref{Newton_step} is solved independently from the previous ones. For instance, 
if a projection method like EKSM is used as Lyapunov solver,
a new subspace has to be computed from scratch at each Newton iteration. 
However, in the next section, we show that 
all the iterates computed by the Newton-Kleinman scheme \eqref{Newton_step} lie on the same space. This means 
that only one space needs to be constructed and the outer iteration, i.e., the Newton-Kleinman iteration, can be performed implicitly leading to remarkable reductions in the computational efforts.


\section{A new iterative framework}\label{A new iterative framework}

In this section we show that all the iterates computed by the Newton-Kleinman method lie in the same subspace if 
a projection method is employed in the solution of the Lyapunov equations \eqref{Newton_step}. 

Given a Lyapunov equation $AW+WA^T+C^TC=0$ where $C\in\RR^{q\times n}$ is low-rank,
projection methods compute an approximate solution of the form $W_m=V_mY_mV_m^T$ where the orthonormal 
columns of $V_m$ are a basis of a suitable subspace $\mathcal{K}_m$, i.e., 
$\mathcal{K}_m=\mbox{Range}(V_m)$, and $Y_m$ is a small square matrix computed, e.g., by imposing a Galerkin condition 
on the residual matrix $AW_m+W_mA^T+C^TC$. In Algorithm~\ref{Alg:proj} the general framework of projection methods for Lyapunov 
equations is reported. See also \cite{Simoncini2016} for more details.
\setcounter{AlgoLine}{0}
\begin{algorithm}
\caption{Galerkin projection method for the Lyapunov matrix equation.\label{Alg:proj}}
\SetKwInOut{Input}{input}\SetKwInOut{Output}{output}
\Input{$A\in\mathbb{R}^{n\times n},$ $A$ negative definite, $C\in\mathbb{R}^{q\times n}$, $\epsilon>0$}
\Output{$S_m\in\mathbb{R}^{n\times t}$, $t\leq \mbox{dim}(\mathcal{K}_m)$, $S_mS_m^T=W_m\approx W$}
\BlankLine 
\nl Set $\beta=\|CC^T\|_F$ \\
\nl Perform economy-size QR of $C^T$, $C^T=V_1 {\pmb \gamma}$\\ 
\For{$m=1, 2,\dots,$ till convergence,}{
\nl Compute next basis block $\mathcal{V}_{m+1}$ and set $V_{m+1}=[V_{m},\mathcal{V}_{m+1}]$ \\ \label{Alg.line:basis}
\nl Update $T_m=V_m^TAV_m$ \\
\nl Solve $T_mY_m+Y_mT_m^T+E_1\pmb{\gamma\gamma}^TE_1^T=0$, $\quad E_1\in\mathbb{R}^{\text{dim}(\mathcal{K}_m)\times \ell}$\\ \label{Alg.line:solve}
\nl Compute $\|R_m\|_F=\|A(V_mY_mV_m^T)+(V_mY_mV_m^T)A^T+C^TC\|_F$\\ \label{Alg.line:res}
\nl \If{$\|R_m\|_F/\beta<\epsilon$\label{Alg.line:stop}}{
\nl \textbf{Break} and go to \textbf{9} } 
  }
  \nl Compute the eigendecomposition of $Y_m$ and retain $\widehat Y\in\mathbb{R}^{\text{dim}(\mathcal{K}_m)\times t}$, $t\leq \mbox{dim}(\mathcal{K}_m)$ 
  \label{Alg.line:yhat}\\
  \nl Set $S_m=V_m\widehat Y$ \label{Alg.line:last}
\end{algorithm}

To ensure the solvability of the projected problems in line~\ref{Alg.line:solve} of Algorithm~\ref{Alg:proj}, the matrix 
$A$ is usually supposed to be negative definite as this is a sufficient condition for having a stable $T_m=V_m^TAV_m$.
However, also the stability of $T_m$ is only a sufficient condition for the well-posedness of the projected equation
in line~\ref{Alg.line:solve} and projection methods work in practice even with a coefficient matrix $A$ that is stable but not 
necessarily negative definite. See, e.g., \cite[Section 5.2.1]{Simoncini2016}.
For the Lyapunov equations~\eqref{Newton_step}, it is easy to show that the coefficient matrices $A-X_kBB^T$ are stable for all $k\geq 0$ when the exact Newton-Kleinman method is employed. See, e.g., \cite{Kleinman1968}. 
However, this is no longer straightforward in the inexact setting and in \cite[Theorem 10]{Benner2016} the authors have to assume that $A-X_{k}BB^T$ is stable for $k>k_0$. Nevertheless, in section~\ref{Convergence results} we show that the $k$-th iterate $X_k$ produced by our scheme is such that the matrix $A-X_kBB^T$ is stable if $A<0$, $\|L_{k+1}\|_F$ is sufficiently small, and the matrices $BB^T$, $C^TC$ fulfill certain conditions. Moreover, the sequence
$\{X_k\}_{k\geq 0}$ computed by our novel scheme is well-defined.


In line~\ref{Alg.line:yhat}, the matrix $\widehat Y$ denotes a 
low-rank factor of $Y_m$. If $Y_m$ is not numerically low-rank, $\widehat Y$ amounts to its Cholesky factor and $t=\text{dim}(\mathcal{K}_m)$.

The performance of projection methods mainly depends on the quality of the approximation space $\mathcal{K}_m$ employed.
As already mentioned, one of the most popular choices is the block extended Krylov subspace~\eqref{def.extended}
which leads to EKSM presented in \cite{Simoncini2007} for the solution of large-scale Lyapunov equations.

If EKSM is employed as inner solver in the Newton-Kleinman method, at each Newton step \eqref{Newton_step} 
we have to build 
a new extended Krylov subspace $\mathbf{EK}_m^\square(A-X_kBB^T,[C^T,X_kB])$ and this is not feasible in practice.
However, in the next section we show that all the iterates $X_{k+1}$ in \eqref{Newton_step} belong to the same
extended Krylov subspace whose definition depends on the choice of the initial guess $X_0$. 
This means that only one approximation space needs to be constructed to compute all the necessary iterates of the Newton-Kleinman
method.
We first show this result supposing that the $(k+1)$-th iterate $X_{k+1}$ is defined as the solution of equation~\eqref{Newton_step}, i.e., we do not perform any line search.
Then we generalize the result to the case of the solves~\eqref{Newton_step_inexact} equipped with a line search. 

Notice that the level of accuracy in the Lyapunov solves, namely $\|L_{k+1}\|_F$, does not play a role in what follows and, in principle, it could even fall short of fulfilling the condition in~\eqref{residual_step_line_search}. However, to have a meaningful sequence $\{X_k\}_{k\geq 0}$ that converges to the unique stabilizing solution $X$ to \eqref{eq.Riccati}, both $\|L_{k+1}\|_F$ and the line search must meet certain conditions. See, e.g., \cite{Benner2016} and section 4.



\subsection{The extended Krylov subspace}\label{The extended Krylov subspace}
If $A$ in \eqref{eq.Riccati} is negative definite, we can choose the initial guess of the Newton-Kleinman method to 
be zero\footnote{In general, to have a well-defined Newton sequence $\{X_k\}_{k\geq0}$ for $X_0=O$ is sufficient to have a stable $A$.
Here we suppose $A<0$ in order to apply a projection method in the solution of the first Lyapunov equation.}.
This means that the first equation to be solved in \eqref{Newton_step} is 
$$
AX_1+X_1A^T=-C^TC,
$$
and the extended Krylov subspace $\mathbf{EK}_{m_1}^\square(A,C^T)$ is constructed. 

The solution computed by EKSM is of the form $X_1=V_{m_1}Y_{m_1}V_{m_1}^T,$ $V_{m_1}\in\RR^{n\times 2qm_1}$, $\mbox{Range}(V_{m_1})=\mathbf{EK}_{m_1}^\square(A,C^T)$, $Y_{m_1}\in\RR^{2qm_1\times 2qm_1}$.
The second equation 
$(A-X_1BB^T)X_2+X_2(A-X_1BB^T)^T=-X_1BB^TX_1-C^TC$ can thus be rewritten as
$$(A-V_{m_1}\Theta_1B^T)X_2+X_2(A-V_{m_1}\Theta_1B^T)^T
=-(V_{m_1}\Theta_1)(V_{m_1}\Theta_1)^T-C^TC,\quad \Theta_1:=Y_{m_1}V_{m_1}^TB\in\RR^{2qm_1\times p}.$$
Therefore, the second space to be constructed is
$\mathbf{EK}_{m_2}^\square(A-V_{m_1}\Theta_1B^T,[C^T,V_{m_1}\Theta_1])$.

In the following theorem we show that $\mathbf{EK}_{m_2}^\square(A-V_{m_1}\Theta_1B^T,[C^T,V_{m_1}\Theta_1])$
is a subspace of $\mathbf{EK}_{\widebar m_2}^\square(A,C^T)$ for a sufficiently large $\widebar m_2$ and this happens also for
the spaces related to all the other equations of the Newton-Kleinman scheme.
\begin{theorem}\label{Theorem1}
 Let $X_{k+1}=S_{m_{k+1}}S_{m_{k+1}}^T$ be the 
 $(k+1)$-th iterate of the Newton-Kleinman scheme computed by solving \eqref{Newton_step} by EKSM.
 Suppose that also all the 
 previous Lyapunov equations of the Newton-Kleinman scheme have been solved by means of EKSM as well.
 Then 
 $$\mbox{Range}(S_{m_{k+1}})\subseteq\mathbf{EK}_{\widebar m_{k+1}}^\square(A,C^T),$$
 for a sufficiently large $\widebar m_{k+1}$ and $\widebar m_{k+1}\leq \sum_{j=1}^{k+1}m_j+2$.
\end{theorem}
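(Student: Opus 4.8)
The plan is to argue by induction on $k$, the crucial elementary fact being that both $A$ and $A^{-1}$ map $\mathbf{EK}_m^\square(A,C^T)$ into $\mathbf{EK}_{m+1}^\square(A,C^T)$. This is immediate from the definition, since $\mathbf{EK}_m^\square(A,C^T)=\mbox{Range}([A^jC^T\,:\,-m\leq j\leq m-1])$, so multiplying by $A^{\pm1}$ merely shifts the admissible exponent range by one, and a single additional extended step absorbs the new power $A^mC^T$ (resp.\ $A^{-(m+1)}C^T$). For the base case $k=0$, the first Newton equation $AX_1+X_1A^T=-C^TC$ is solved by EKSM on $\mathbf{EK}_{m_1}^\square(A,C^T)$, hence $\mbox{Range}(S_{m_1})\subseteq\mbox{Range}(V_{m_1})=\mathbf{EK}_{m_1}^\square(A,C^T)$ and we may take $\widebar m_1=m_1$.

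For the inductive step I would assume $\mbox{Range}(S_{m_j})\subseteq\mathbf{EK}_{\widebar m_j}^\square(A,C^T)$ with $\widebar m_j\leq\sum_{i=1}^{j}m_i+2$ for all $j\leq k$. As recalled before the statement, EKSM solves the $(k+1)$-th Lyapunov equation on $\mathbf{EK}_{m_{k+1}}^\square(A-X_kBB^T,[C^T,X_kB])$, where $X_kB=S_{m_k}(S_{m_k}^TB)$. Since $X_k=S_{m_k}S_{m_k}^T$, the inductive hypothesis gives $\mbox{Range}([C^T,X_kB])\subseteq\mathbf{EK}_{\widebar m_k}^\square(A,C^T)$ and, for every $v$, $\mbox{Range}(X_kBB^Tv)\subseteq\mbox{Range}(S_{m_k})\subseteq\mathbf{EK}_{\widebar m_k}^\square(A,C^T)$. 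Combining this with the key fact, $A-X_kBB^T$ maps any subspace of $\mathbf{EK}_{j}^\square(A,C^T)$ with $j\geq\widebar m_k$ into $\mathbf{EK}_{j+1}^\square(A,C^T)$; iterating from the starting block then puts the positive powers $(A-X_kBB^T)^i[C^T,X_kB]$, $i=0,\dots,m_{k+1}-1$, inside $\mathbf{EK}_{\widebar m_k+m_{k+1}-1}^\square(A,C^T)$.

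The more delicate part is the ``inverse'' half of the extended Krylov subspace, for which I would use the Sherman--Morrison--Woodbury identity
$$(A-X_kBB^T)^{-1}=A^{-1}+A^{-1}(X_kB)\,(I_p-B^TA^{-1}X_kB)^{-1}\,B^TA^{-1},$$
valid because $A$ and $A-X_kBB^T$ are invertible (the latter being necessary for the Newton iterate to exist at all; cf.\ section~\ref{Convergence results}), the $p\times p$ capacitance matrix being nonsingular exactly when $A-X_kBB^T$ is. Both summands send a subspace of $\mathbf{EK}_j^\square(A,C^T)$, $j\geq\widebar m_k$, into $\mathbf{EK}_{j+1}^\square(A,C^T)$: the first because $A^{-1}$ does, and the second because its range lies in $\mbox{Range}(A^{-1}X_kB)\subseteq A^{-1}\mathbf{EK}_{\widebar m_k}^\square(A,C^T)\subseteq\mathbf{EK}_{\widebar m_k+1}^\square(A,C^T)$. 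Applying this $i$ times to $[C^T,X_kB]$ places the negative powers $(A-X_kBB^T)^{-i}[C^T,X_kB]$, $i=1,\dots,m_{k+1}$, inside $\mathbf{EK}_{\widebar m_k+m_{k+1}}^\square(A,C^T)$. Hence $\mathbf{EK}_{m_{k+1}}^\square(A-X_kBB^T,[C^T,X_kB])\subseteq\mathbf{EK}_{\widebar m_k+m_{k+1}}^\square(A,C^T)$, so $\mbox{Range}(S_{m_{k+1}})\subseteq\mathbf{EK}_{\widebar m_{k+1}}^\square(A,C^T)$ with $\widebar m_{k+1}:=\widebar m_k+m_{k+1}$; unrolling the recursion from $\widebar m_1=m_1$ gives $\widebar m_{k+1}=\sum_{j=1}^{k+1}m_j$, well within the claimed bound $\sum_{j=1}^{k+1}m_j+2$.

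I expect the main obstacle to be precisely this inverse part: one must recognize that, although $(A-X_kBB^T)^{-1}$ is a dense matrix, on the relevant subspaces it acts like $A^{-1}$ up to a low-rank correction whose range is already contained in $A^{-1}\mathbf{EK}_{\widebar m_k}^\square(A,C^T)$, and then one must track the single-step growth of the subspace index carefully through the recursion so that the condition $j\geq\widebar m_k$ needed to apply the key fact is maintained at every step. The positive-power part and the base case are comparatively routine.
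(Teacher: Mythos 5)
Your proof is correct and follows essentially the same route as the paper's: induction on $k$, with the positive powers handled by observing that the low-rank correction $X_kBB^T$ maps everything back into $\mathbf{EK}_{\widebar m_k}^\square(A,C^T)$, and the negative powers handled via the Sherman--Morrison--Woodbury identity. Your bookkeeping of the index growth is in fact slightly tighter ($\widebar m_{k+1}=\widebar m_k+m_{k+1}$ versus the paper's $\widebar m_{k+1}=\widebar m_k+m_{k+1}+2$), which comfortably stays within the stated bound.
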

\begin{proof}
 We are going to prove the statement by induction on $k$.

 For $k=0$, the equation $AX_1+X_1A^T=-C^TC$ needs to be solved. By applying EKSM
 we obtain a solution of the form $X_1=S_1S_1^T$ such that 
 $\mbox{Range}(S_1)\subseteq\mathbf{EK}_{m_1}^\square(A,C^T)$, and we can set $\widebar m_1=m_1\leq m_1+2$.
 
 We now assume that $\mbox{Range}(S_{k})\subseteq\mathbf{EK}_{\widebar m_{k}}^\square(A,C^T)$, $\widebar m_{k}\leq \sum_{j=1}^{k}m_j+2$, for a certain $k>0$ 
 and we show the statement for $k+1$. If $X_k=S_kS_k^T=V_{\widebar m_k}Y_{\widebar m_k}V_{\widebar m_k}^T$
 where $V_{\widebar m_k}\in\RR^{n\times 2q\widebar m_k}$ denotes the orthonormal basis of 
 $\mathbf{EK}_{\widebar m_{k}}^\square(A,C^T)$, the Lyapunov equation which defines $X_{k+1}$ in \eqref{Newton_step} can be 
 written as
 \begin{equation}\label{eq1_proof}
(A-V_{\widebar m_{k}}\Theta_kB^T)X_{k+1}+X_{k+1}(A-V_{\widebar m_{k}}\Theta_kB^T)^T
=-(V_{\widebar m_{k}}\Theta_k)(V_{\widebar m_{k}}\Theta_k)^T-C^TC,  
 \end{equation}
where $\Theta_k:=Y_{\widebar m_{k}}V_{\widebar m_{k}}^TB\in\RR^{2q\widebar m_{k}\times p}$. Then, the extended Krylov 
subspace to solve \eqref{eq1_proof} has the form 
$\mathbf{EK}_{m_{k+1}}^\square(A-V_{\widebar m_{k}}\Theta_kB^T,[C^T,V_{\widebar m_{k}}\Theta_1])$ and we show that this is 
a subspace of $\mathbf{EK}_{\widebar m_{k+1}}^\square(A,C^T)$ for a sufficiently large $\widebar m_{k+1}$ with $\widebar m_{k+1}\leq \sum_{j=1}^{k+1}m_j+2$.

A matrix $H\in\mathbb{R}^{n\times (p+q)}$ such that $\mbox{Range}(H)\subseteq\mathbf{EK}_{m_{k+1}}^\square(A-V_{\widebar m_{k}}\Theta_kB^T,[C^T,V_{\widebar m_{k}}\Theta_k])$ can 
be written as
$$H=\sum_{j=0}^{m_{k+1}-1}(A-V_{\widebar m_{k}}\Theta_kB^T)^j[C^T,V_{\widebar m_{k}}\Theta_k]\xi_j+
\sum_{j=1}^{m_{k+1}}(A-V_{\widebar m_{k}}\Theta_kB^T)^{-j}[C^T,V_{\widebar m_{k}}\Theta_k]\nu_j,
$$
where $\xi_j,\nu_j\in\RR^{(q+p)\times(q+p)}$.

We first focus on the polynomial part and show that 
$$\mbox{Range}\left(\sum_{j=0}^{m_{k+1}-1}(A-V_{\widebar m_{k}}\Theta_kB^T)^j[C^T,V_{\widebar m_{k}}\Theta_k]\xi_j\right)\subseteq
\mathbf{EK}_{\widebar m_{k+1}}^\square(A,C^T),$$
by induction on $j$.
For $j=0$ we have $[C^T,V_{\widebar m_{k}}\Theta_k]\xi_0$ whose range is clearly a subset of 
$\mathbf{EK}_{\widebar m_{k}}^\square(A,C^T)$ as $\mbox{Range}(C^T)\subseteq\mathbf{EK}_{\widebar m_{k}}^\square(A,C^T)$
by definition and $V_{\widebar m_{k}}$ is a basis of $\mathbf{EK}_{\widebar m_{k}}^\square(A,C^T)$.

We now assume that 
\begin{equation}\label{eq1a_proof}
\mbox{Range}((A-V_{\widebar m_{k}}\Theta_kB^T)^j[C^T,V_{\widebar m_{k}}\Theta_k]\xi_j)\subseteq 
\mathbf{EK}_{\widebar m_{k}+j}^\square(A,C^T),\;\mbox{for a certain $j>0$}, 
\end{equation}
 and we show that 
$\mbox{Range}((A-V_{\widebar m_{k}}\Theta_kB^T)^{j+1}[C^T,V_{\widebar m_{k}}\Theta_k]\xi_{j+1})\subseteq 
\mathbf{EK}_{\widebar m_{k}+j+1}^\square(A,C^T)$. We have
\begin{equation}\label{eq2_proof}
\begin{split}
(A-V_{\widebar m_{k}}\Theta_kB^T)^{j+1}[C^T,V_{\widebar m_{k}}\Theta_k]\xi_{j+1}&=
   (A-V_{\widebar m_{k}}\Theta_kB^T)(A-V_{\widebar m_{k}}\Theta_kB^T)^{j}[C^T,V_{\widebar m_{k}}\Theta_k]\xi_{j+1}\\
  &= A(A-V_{\widebar m_{k}}\Theta_kB^T)^{j}[C^T,V_{\widebar m_{k}}\Theta_k]\xi_{j+1}\\
  &\quad- V_{\widebar m_{k}}\left(\Theta_kB^T(A-V_{\widebar m_{k}}\Theta_kB^T)^{j}[C^T,V_{\widebar m_{k}}\Theta_k]\xi_{j+1}\right).
\end{split} 
\end{equation}
Since \eqref{eq1a_proof} holds,
$$\mbox{Range}(A(A-V_{\widebar m_{k}}\Theta_kB^T)^j[C^T,V_{\widebar m_{k}}\Theta_k]\xi_j)\subseteq 
A\cdot\mathbf{EK}_{\widebar m_{k}+j}^\square(A,C^T)\subseteq\mathbf{EK}_{\widebar m_{k}+j+1}^\square(A,C^T).$$
Moreover,
the second term in the right-hand side of \eqref{eq2_proof} is just a linear combination of the columns of 
$V_{\widebar m_{k}}$ and its range is thus contained in $\mathbf{EK}_{\widebar m_{k}+j}^\square(A,C^T)$.
Therefore, 
$$\mbox{Range}\left(\sum_{j=0}^{m_{k+1}-1}(A-V_{\widebar m_{k}}\Theta_kB^T)^j[C^T,V_{\widebar m_{k}}\Theta_k]\xi_j\right)\subseteq
\mathbf{EK}_{\widebar m_{k}+m_{k+1}-1}^\square(A,C^T).$$

The exact same arguments together with the 
Sherman-Morrison-Woodbury (SMW) formula \cite[Equation (2.1.4)]{Golub2013},
\begin{equation}\label{SMW}
(A-V_{\widebar m_{k}}\Theta_kB^T)^{-1}=
A^{-1}+A^{-1}V_{\widebar m_k}\Theta_k(I-B^TA^{-1}V_{\widebar m_k}\Theta_k)^{-1}B^TA^{-1}
=A^{-1}(A+V_{\widebar m_k}\Upsilon_k)A^{-1},
\end{equation}
where $\Upsilon_k:=\Theta_k(I-B^TA^{-1}V_{\widebar m_k}\Theta_k)^{-1}B^T\in\RR^{2q\widebar m_k\times n}$, show that 
$$\mbox{Range}\left(\sum_{j=1}^{m_{k+1}}(A+V_{\widebar m_k}\Theta_kB^T)^{-j}[C^T,V_{\widebar m_{k}}\Theta_k]\nu_j\right)
\subseteq\mathbf{EK}_{\widebar m_{k}+m_{k+1}+2}^\square(A,C^T).$$

Since $\mathbf{EK}_{\widebar m_{k}+m_{k+1}-1}^\square(A,C^T)\subseteq\mathbf{EK}_{\widebar m_{k}+m_{k+1}+2}^\square(A,C^T),$
we can define $\widebar m_{k+1}=\widebar m_k+m_{k+1}+2$ and get the result.
\begin{flushright}
 $\square$
\end{flushright}

\end{proof}

Theorem~\ref{Theorem1} says that we can use the same space $\mathbf{EK}_{ m}^\square(A,C^T)$
for solving all the Lyapunov equations of the Newton-Kleinman scheme \eqref{Newton_step}. Moreover, for 
 the $(k+1)$-th Lyapunov equation we do not have to recompute the space from scratch but we can 
reuse the space already computed for the previous equations and just keep expanding it.

The idea of embedding approximation spaces related to different problems in one single, possibly larger, space is not new. For instance, in the context of the solution of shifted linear systems, it has been shown in \cite{Simoncini2003} how the total number of iteration of (restarted) FOM applied to a sequence of shifted linear systems simultaneously is equal to the one achieved by applying (restarted) FOM to the single linear system with the slowest convergence rate. Similarly, in our context, the
dimension of the space constructed to solve all the Lyapunov equations of the Newton-Kleinman scheme is equal to the 
dimension of $\mathbf{EK}_m^\square(A,C^T)$ when this is adopted as approximation space for the solution of the last equation in the sequence. The \emph{feedback invariant} property of $\mathbf{EK}_m^\square(A,C^T)$ may be reminiscent of the shifted invariant feature of the Krylov subspace exploited in the solution of sequences of shifted linear systems. However, all the linear systems are solved at the same time in the latter problem setting while we have to solve the Lyapunov equations in \eqref{Newton_step} serially as the $(k+1)$-th equation depends on the solution of the previous one.

A result similar to the one stated in Theorem~\ref{Theorem1} can be shown also in the case of the solves~\eqref{Newton_step_inexact} equipped with the line search
\eqref{Newton_step_linesearch}.
\begin{Cor} \label{Cor1}
 Let $\widetilde X_{k+1}=\widetilde S_{m_{k+1}}\widetilde S_{m_{k+1}}^T$ be the solution 
 to \eqref{Newton_step_inexact} computed by EKSM.
 Suppose that also all the 
 previous Lyapunov equations of the Newton-Kleinman scheme have been solved by means of EKSM as well.
 Then $X_{k+1}=X_k+\lambda_kZ_k=P_{k+1}P_{k+1}^T$, $P_{k+1}$ low-rank, is such that
 $$\mbox{Range}(P_{k+1})\subseteq\mathbf{EK}_{\widebar m_{k+1}}^\square(A,C^T),$$
 for a sufficiently large $\widebar m_{k+1}$, 
 $\widebar m_{k+1}\leq \sum_{j=1}^{k+1}m_j+2$.
\end{Cor}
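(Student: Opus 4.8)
The plan is to run the same induction on $k$ as in the proof of Theorem~\ref{Theorem1}, the only genuinely new ingredients being the residual term $L_{k+1}$ in the right-hand side of \eqref{Newton_step_inexact} and the affine update \eqref{Newton_step_linesearch}. I would first argue that $L_{k+1}$ is harmless: solving \eqref{Newton_step_inexact} by EKSM amounts to running EKSM on the exact Newton equation \eqref{Newton_step} and stopping as soon as the residual $L_{k+1}$ is small enough, so that $L_{k+1}$ never enters the construction of the basis and the space underlying $\widetilde S_{m_{k+1}}$ is exactly $\mathbf{EK}_{m_{k+1}}^\square(A-X_kBB^T,[C^T,X_kB])$, possibly with a smaller $m_{k+1}$ than in the exact case; a subspace of the space considered in Theorem~\ref{Theorem1} is of course still contained in $\mathbf{EK}_{\widebar m_{k+1}}^\square(A,C^T)$.

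For the base case $k=0$ (recall that $X_0=O$ here, since $A<0$), EKSM applied to $A\widetilde X_1+\widetilde X_1 A^T=-C^TC+L_1$ returns $\widetilde X_1=\widetilde S_1\widetilde S_1^T$ with $\mbox{Range}(\widetilde S_1)\subseteq\mathbf{EK}_{m_1}^\square(A,C^T)$; since $Z_0=\widetilde X_1-X_0=\widetilde X_1$ and $X_1=\lambda_0\widetilde X_1$, this gives $\mbox{Range}(P_1)\subseteq\mathbf{EK}_{m_1}^\square(A,C^T)$ with $\widebar m_1=m_1$. For the induction step I would assume $X_k=P_kP_k^T=V_{\widebar m_k}Y_{\widebar m_k}V_{\widebar m_k}^T$ with $\mbox{Range}(V_{\widebar m_k})=\mathbf{EK}_{\widebar m_k}^\square(A,C^T)$ and $\widebar m_k\leq\sum_{j=1}^k m_j+2$. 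Then \eqref{Newton_step_inexact} has coefficient matrix $A-V_{\widebar m_k}\Theta_kB^T$ with $\Theta_k:=Y_{\widebar m_k}V_{\widebar m_k}^TB$ and low-rank right-hand side factor $[C^T,V_{\widebar m_k}\Theta_k]$, exactly as in \eqref{eq1_proof}, and the argument of Theorem~\ref{Theorem1} applies word for word — splitting a generic element of $\mathbf{EK}_{m_{k+1}}^\square(A-V_{\widebar m_k}\Theta_kB^T,[C^T,V_{\widebar m_k}\Theta_k])$ into its polynomial and rational parts and using the inner induction on the power $j$ together with the SMW identity \eqref{SMW} — to give
$$\mbox{Range}(\widetilde S_{m_{k+1}})\subseteq\mathbf{EK}_{m_{k+1}}^\square(A-V_{\widebar m_k}\Theta_kB^T,[C^T,V_{\widebar m_k}\Theta_k])\subseteq\mathbf{EK}_{\widebar m_k+m_{k+1}+2}^\square(A,C^T).$$

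Next I would bring in the line search. Since $Z_k=\widetilde X_{k+1}-X_k$ and $X_{k+1}=X_k+\lambda_kZ_k=(1-\lambda_k)X_k+\lambda_k\widetilde X_{k+1}$, and using $\mbox{Range}(M+N)\subseteq\mbox{Range}(M)+\mbox{Range}(N)$ together with $\mathbf{EK}_{\widebar m_k}^\square(A,C^T)\subseteq\mathbf{EK}_{\widebar m_k+m_{k+1}+2}^\square(A,C^T)$, one gets $\mbox{Range}(X_{k+1})\subseteq\mathbf{EK}_{\widebar m_k+m_{k+1}+2}^\square(A,C^T)$. I would then set $\widebar m_{k+1}=\widebar m_k+m_{k+1}+2$, so that the bound on $\widebar m_{k+1}$ follows by the same counting as in Theorem~\ref{Theorem1}; and since $X_{k+1}$ is symmetric positive semidefinite — being, for $\lambda_k\in(0,1]$, a convex combination of the positive semidefinite matrices $X_k$ and $\widetilde X_{k+1}$ — it admits a factorization $X_{k+1}=P_{k+1}P_{k+1}^T$ with $\mbox{Range}(P_{k+1})=\mbox{Range}(X_{k+1})$, which closes the induction.

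The step I expect to be the main obstacle is the first remark, namely the verification that EKSM applied to the inexact equation \eqref{Newton_step_inexact} builds exactly the same sequence of extended Krylov spaces as EKSM applied to the exact Newton equation \eqref{Newton_step} — it merely terminates earlier — so that Theorem~\ref{Theorem1} can be invoked unchanged. Once this is granted, everything else is bookkeeping: the range inclusions are literally those of Theorem~\ref{Theorem1}, and the line search contributes only the elementary sum-of-ranges argument above. A minor, essentially cosmetic, point to check is that $X_{k+1}$ stays positive semidefinite so that $P_{k+1}$ can be taken with $X_{k+1}=P_{k+1}P_{k+1}^T$; this holds whenever the computed step size satisfies $\lambda_k\le1$, as is the case for the line-search strategies of \cite{Benner2016}.
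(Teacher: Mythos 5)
Your proposal is correct and follows essentially the same route as the paper: induction on $k$ with $X_0=O$, invoking the argument of Theorem~\ref{Theorem1} for $\widetilde S_{m_{k+1}}$ (the inexactness $L_{k+1}$ indeed plays no role in the space construction), and then absorbing the affine update $X_{k+1}=(1-\lambda_k)X_k+\lambda_k\widetilde X_{k+1}$ into the same subspace. The only cosmetic difference is that the paper exhibits the factor explicitly as $P_{k+1}=[\sqrt{1-\lambda_k}\,P_k,\sqrt{\lambda_k}\,\widetilde S_{m_{k+1}}]$ rather than appealing to positive semidefiniteness of $X_{k+1}$, and your remark that this requires $\lambda_k\leq 1$ is a fair point that the paper leaves implicit.
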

\begin{proof}
 We again prove the statement by induction on $k$. 
 For $k=1$, we compute the matrix $\widetilde X_1=\widetilde S_{m_1}\widetilde S_{m_1}^T$,
 $\mbox{Range}(\widetilde S_{m_1})\subseteq\mathbf{EK}_{m_1}^\square(A,C^T)$, which is an approximate solution of the equation 
 $AX+XA^T=-C^TC$. Then, since $X_0=O$, we define the first iterate of the Newton sequence as $X_1=\lambda_1\widetilde X_1$
 so that $X_1$ can be written as $X_1=P_1P_1^T$, $P_1=\sqrt{\lambda_1}\widetilde S_{m_1}$ and 
 $\mbox{Range}(P_{1})\subseteq\mathbf{EK}_{m_{1}}^\square(A,C^T);$ $\widebar m_1 =m_1\leq m_1+2$.
 
 We now suppose that the statement has been proven for a certain $k>1$ and we show it for $k+1$.
 Let $\widetilde X_{k+1}=\widetilde S_{m_{k+1}}\widetilde S_{m_{k+1}}^T$ be the approximate solution of the equation
 $(A-X_kBB^T)X+X(A-X_kBB^T)^T=-X_kBB^TX_k-C^TC$ computed by the EKSM. Since $X_k=P_kP_k^T$ is such that 
 $\mbox{Range}(P_k)\subseteq \mathbf{EK}_{\widebar m_{k}}^\square(A,C^T)$, $\widebar m_{k}\leq \sum_{j=1}^{k}m_j+2$, by inductive hypothesis,
 with the same argument of Theorem~\ref{Theorem1} we can show that 
 $\mbox{Range}(\widetilde S_{m_{k+1}})\subseteq \mathbf{EK}_{\widebar m_{k+1}}^\square(A,C^T)$ for a sufficiently 
 large $\widebar m_{k+1}$, $\widebar m_{k+1}\leq \sum_{j=1}^{k+1}m_j+2$. 
 Then, following section~\ref{The Newton-Kleinman method} we define $Z_k=\widetilde X_{k+1}-X_k$, 
 so that the $(k+1)$-th iterate of the Newton sequence is
 \begin{equation*}
 \begin{split}
 X_{k+1}&=X_k+\lambda_kZ_k=(1-\lambda_k)X_k+\lambda_k\widetilde X_{k+1}=
 (1-\lambda_k)P_kP_k^T+\lambda_k\widetilde S_{m_{k+1}}\widetilde S_{m_{k+1}}^T \\
 &=  [(1-\lambda_k)P_k,\lambda_k \widetilde S_{m_{k+1}}][P_k,\widetilde S_{m_{k+1}}]^T
 =P_{k+1}P_{k+1}^T,\\
 \end{split}  
 \end{equation*}
 where $P_{k+1}:=[\sqrt{1-\lambda_k}P_k,\sqrt{\lambda_k} \widetilde S_{m_{k+1}}]$ is such that
 $\mbox{Range}(P_{k+1})\subseteq \mathbf{EK}_{\widebar m_{k+1}}^\square(A,C^T)$, $\widebar m_{k+1}\leq \sum_{j=1}^{k+1}m_j+2$, as
 $\mbox{Range}(P_k)\subseteq \mathbf{EK}_{\widebar m_{k}}^\square(A,C^T)\subseteq \mathbf{EK}_{\widebar m_{k+1}}^\square(A,C^T)$.
 \begin{flushright}
 $\square$
\end{flushright}

\end{proof}

The estimate on the number of iterations $\widebar m_{k+1}$
given in Theorem~\ref{Theorem1} and Corollary~\ref{Cor1} is very rough and it is provided only for showing that the dimension of $\mathbf{EK}_{\widebar m_{k+1}}^\square(A,C^T)$ is bounded by a constant which is smaller than $n$ if $m_j$ is moderate for $j=1,\ldots,k+1$.

For a given tolerance $\epsilon$, the actual dimension of $\mathbf{EK}_{\widebar m_{k+1}}^\square(A,C^T)$ to achieve $\|(A-X_kBB^T)X_{k+1}+X_{k+1}(A-X_kBB^T)^T+X_kBB^TX_k+C^TC \|_F\leq\epsilon$ is in general much smaller than
$2q\left(\sum_{j=1}^{k+1}m_j+2\right)$. See, e.g., Example~\ref{Ex.1}.

\subsection{Implementation details}\label{Implementation details}
In this section we present how to fully exploit Theorem~\ref{Theorem1} and Corollary~\ref{Cor1} by merging the (inexact)
Newton-Kleinman method in a projection procedure.
Also here we present our strategy by first assuming 
that the line search is not performed  
and then we generalize the approach.

We start by solving the equation $AX_1+X_1A^T=-C^TC$ by projection onto the extended Krylov subspace 
$\mathbf{EK}_{m_1}^\square(A,C^T)=\mbox{Range}(V_{m_1})$.
As outlined in Algorithm~\ref{Alg:proj}, if $T_{m_1}:=V_{m_1}^TAV_{m_1}$, at each iteration of EKSM we have to solve
the projected equation
\begin{equation}\label{projected_eq1}
T_{m_1}Y+YT_{m_1}^T+E_1\pmb{\gamma\gamma}^TE_1^T=0, 
\end{equation}
where $E_1\in\mathbb{R}^{2qm_1\times 2q}$ corresponds to the first $2q$ columns of $I_{2qm_1}$,  and $\pmb{\gamma}\in\RR^{2q \times q}$, $C^T=V_1\pmb{\gamma}$. 
Since equation \eqref{projected_eq1} is of small dimension, decomposition based methods as the Bartels-Stewart 
method \cite{Bartels1972} or the Hammarling method \cite{Hammarling1982} can be employed for its solution.

If at iteration $\widebar m_1$ the Lyapunov residual norm 
$\|R_{\widebar m_1}\|_F=\|A(V_{\widebar m_1}Y_{\widebar m_1}V_{\widebar m_1}^T)+(V_{\widebar m_1}Y_{\widebar m_1}V_{\widebar m_1}^T)A^T+C^TC \|_F$ is sufficiently small, we define
$Y_{\widebar m_1}:=Y$ and check the residual norm of the Riccati equation, namely
$\|\mathcal{R}(V_{\widebar m_1}Y_{\widebar m_1}V_{\widebar m_1}^T)\|_F$. If this is sufficiently small we have completed
the procedure and $V_{\widebar m_1}Y_{\widebar m_1}V_{\widebar m_1}^T$ is the sought approximated solution to 
\eqref{eq.Riccati}, otherwise we pass to solve the second equation of the Newton scheme.
We can write 
\begin{equation}\label{eq.2}
(A-V_{\widebar m_1}Y_{\widebar m_1}V_{\widebar m_1}^TBB^T)X_2+X_2(A-V_{\widebar m_1}Y_{\widebar m_1}V_{\widebar m_1}^TBB^T)^T
= -V_{\widebar m_1}Y_{\widebar m_1}V_{\widebar m_1}^TBB^TV_{\widebar m_1}Y_{\widebar m_1}V_{\widebar m_1}^T
-C^TC,
\end{equation}
and Theorem~\ref{Theorem1} says that $\mathbf{EK}_{\widebar m_2}^\square(A,C^T)$, $\widebar m_2\geq \widebar m_1$,
is still a good approximation space for
solving it. We thus start by projecting \eqref{eq.2} onto the already computed space $\mathbf{EK}_{\widebar m_1}^\square(A,C^T)$
getting
\begin{equation}\label{eq.2projected}
(T_{\widebar m_1}-Y_{\widebar m_1}B_{\widebar m_1}B_{\widebar m_1}^T)Y+ 
Y(T_{\widebar m_1}-Y_{\widebar m_1}B_{\widebar m_1}B_{\widebar m_1}^T)^T=-Y_{\widebar m_1}B_{\widebar m_1}B_{\widebar m_1}^TY_{\widebar m_1}
-E_1\pmb{\gamma\gamma}^TE_1^T,
\end{equation}
where $B_{\widebar m_1}=V_{\widebar m_1}^TB$. Notice that $B_{\widebar m_1}$ 
can be computed on the fly performing $2q$ inner products per iteration. 

It may happen that $\mathbf{EK}_{\widebar m_1}^\square(A,C^T)$ is already a good approximation space for 
equation \eqref{eq.2}, that is, the Lyapunov residual norm 
$\|R_{\widebar m_2}\|_F=\|(A-V_{\widebar m_1}Y_{\widebar m_1}V_{\widebar m_1}^TBB^T)V_{\widebar m_1}YV_{\widebar m_1}^T+
V_{\widebar m_1}YV_{\widebar m_1}^T(A-V_{\widebar m_1}Y_{\widebar m_1}V_{\widebar m_1}^TBB^T)^T
+V_{\widebar m_1}Y_{\widebar m_1}V_{\widebar m_1}^TBB^TV_{\widebar m_1}Y_{\widebar m_1}V_{\widebar m_1}^T
+C^TC\|_F$,
where $Y$ is the solution to \eqref{eq.2projected},
is sufficiently small. If this is the case we set $\widebar  m_2=\widebar m_1$, $Y_{\widebar m_2}:=Y$ and check 
$\|\mathcal{R}(V_{\widebar m_2}Y_{\widebar m_2}V_{\widebar m_2}^T)\|_F$. Otherwise, we expand the space by computing the next basis block $\mathcal{V}_{\widebar m_1+1}\in\RR^{n\times 2q}$
such that $V_{\widebar m_1+1}:=[V_{\widebar m_1},\mathcal{V}_{\widebar m_1+1}]$ has orthonormal columns and
$\mbox{Range}(V_{\widebar m_1+1})=\mathbf{EK}_{\widebar m_1+1}^\square(A,C^T)$.

In the next proposition we show how to easily compute the projection of the current Lyapunov equation once the  subspace has been expanded and a cheap computation of the residual norm.
%
\begin{Prop}\label{Prop_projectedeqs}
 Let $X_k=V_{\widebar m_k}Y_{\widebar m_k}V_{\widebar m_k}^T$, 
 $\mbox{Range}(V_{\widebar m_k})=\mathbf{EK}_{\widebar m_k}^\square(A,C^T)$ be the $k$-th iterate of the Newton-Kleinman scheme.
 Then, the projection of the $(k+1)$-th equation \eqref{Newton_step}
 onto $\mathbf{EK}_{\widebar m_{k+1}}^\square(A,C^T)=\mbox{Range}(V_{\widebar m_{k+1}})$,
 $\widebar m_{k+1}\geq \widebar m_{k}$, is given by
{\small
\begin{equation}\label{eq.kprojected}
Q_{\widebar m_{k+1}}Y+ 
YQ_{\widebar m_{k+1}}^T=-\mbox{diag}(Y_{\widebar m_{k}},O_{2q(\widebar m_{k+1}-\widebar m_{k})})B_{\widebar m_{k+1}}B_{\widebar m_{k+1}}^T
\mbox{diag}(Y_{\widebar m_{k}},O_{2q(\widebar m_{k+1}-\widebar m_{k})})-E_1\pmb{\gamma\gamma}^TE_1^T,
\end{equation}
}
 where $Q_{\widebar m_{k+1}}=T_{\widebar m_{k+1}}-\mbox{diag}(Y_{\widebar m_{k}},O_{2q(\widebar m_{k+1}-\widebar m_{k})})B_{\widebar m_{k+1}}
B_{\widebar m_{k+1}}^T$, 
 $T_{\widebar m_{k+1}}=V_{\widebar m_{k+1}}^TAV_{\widebar m_{k+1}}$, $B_{\widebar m_{k+1}}=V_{\widebar m_{k+1}}^TB$
 and $C^T=V_1\pmb{\gamma}$.
  Moreover,
the solution 
$Y_{\widebar m_{k+1}}$ to \eqref{eq.kprojected} is such that 
\begin{multline}\label{res.projected}
\|(A-X_{k}BB^T)(V_{\widebar m_{k+1}}Y_{\widebar m_{k+1}}V_{\widebar m_{k+1}}^T)+
(V_{\widebar m_{k+1}}Y_{\widebar m_{k+1}}V_{\widebar m_{k+1}}^T)(A-X_{k}BB^T)^T+X_{k}BB^TX_{k}+C^TC\|_F=\\
=\sqrt{2}\|Y_{\widebar m_{k+1}}\underline{T}_{\widebar m_{k+1}}^T
E_{\widebar m_{k+1}+1}\|_F,
\end{multline}
where $\underline{T}_{\widebar m_{k+1}}=V_{\widebar m_{k+1}+1}^TAV_{\widebar m_{k+1}}$.
\end{Prop}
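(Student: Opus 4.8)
The plan is to rewrite the $(k+1)$-th Lyapunov equation~\eqref{Newton_step} in the orthonormal basis $V_{\widebar m_{k+1}}$, impose the Galerkin condition that defines $Y_{\widebar m_{k+1}}$, and then read off the residual norm from an Arnoldi-type relation for the extended Krylov subspace. The starting point is the nestedness $\mathbf{EK}_{\widebar m_k}^\square(A,C^T)\subseteq\mathbf{EK}_{\widebar m_{k+1}}^\square(A,C^T)$, which ensures that $V_{\widebar m_k}$ coincides with the leading $2q\widebar m_k$ columns of $V_{\widebar m_{k+1}}$; hence
$$X_k=V_{\widebar m_k}Y_{\widebar m_k}V_{\widebar m_k}^T=V_{\widebar m_{k+1}}\,\mbox{diag}(Y_{\widebar m_k},O_{2q(\widebar m_{k+1}-\widebar m_k)})\,V_{\widebar m_{k+1}}^T,\qquad V_{\widebar m_{k+1}}^TX_k=\mbox{diag}(Y_{\widebar m_k},O)\,V_{\widebar m_{k+1}}^T.$$

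For the first claim I would substitute $X_{k+1}=V_{\widebar m_{k+1}}YV_{\widebar m_{k+1}}^T$ in~\eqref{Newton_step} and multiply on the left by $V_{\widebar m_{k+1}}^T$ and on the right by $V_{\widebar m_{k+1}}$, using $V_{\widebar m_{k+1}}^TV_{\widebar m_{k+1}}=I$ together with $V_{\widebar m_{k+1}}^TAV_{\widebar m_{k+1}}=T_{\widebar m_{k+1}}$, $V_{\widebar m_{k+1}}^TB=B_{\widebar m_{k+1}}$ and $C^T=V_1\,{\pmb\gamma}=V_{\widebar m_{k+1}}E_1\,{\pmb\gamma}$. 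The coefficient of $Y$ then collapses to $V_{\widebar m_{k+1}}^T(A-X_kBB^T)V_{\widebar m_{k+1}}=T_{\widebar m_{k+1}}-\mbox{diag}(Y_{\widebar m_k},O)B_{\widebar m_{k+1}}B_{\widebar m_{k+1}}^T=Q_{\widebar m_{k+1}}$, the term $V_{\widebar m_{k+1}}^TX_kBB^TX_kV_{\widebar m_{k+1}}$ becomes $\mbox{diag}(Y_{\widebar m_k},O)B_{\widebar m_{k+1}}B_{\widebar m_{k+1}}^T\mbox{diag}(Y_{\widebar m_k},O)$, and $V_{\widebar m_{k+1}}^TC^TCV_{\widebar m_{k+1}}=E_1{\pmb{\gamma\gamma}}^TE_1^T$; this is precisely~\eqref{eq.kprojected}. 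At this step I only use that~\eqref{eq.kprojected} admits a unique, hence symmetric, solution $Y_{\widebar m_{k+1}}$, which follows from the stability of $Q_{\widebar m_{k+1}}$.

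For the residual identity~\eqref{res.projected} I would invoke the extended-Krylov analogue of the Arnoldi relation: since $A\,\mathbf{EK}_m^\square(A,C^T)\subseteq\mathbf{EK}_{m+1}^\square(A,C^T)$ — immediate from the definition~\eqref{def.extended} — one has $AV_{\widebar m_{k+1}}=V_{\widebar m_{k+1}+1}\underline T_{\widebar m_{k+1}}$ with $\underline T_{\widebar m_{k+1}}=V_{\widebar m_{k+1}+1}^TAV_{\widebar m_{k+1}}$, so that, writing $\mathcal V_{\widebar m_{k+1}+1}$ for the last basis block and $P:=V_{\widebar m_{k+1}}V_{\widebar m_{k+1}}^T$, one obtains $(I-P)AV_{\widebar m_{k+1}}=\mathcal V_{\widebar m_{k+1}+1}E_{\widebar m_{k+1}+1}^T\underline T_{\widebar m_{k+1}}$. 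Denote by $\mathcal R_{\widebar m_{k+1}}$ the symmetric residual whose Frobenius norm appears on the left of~\eqref{res.projected} and set $\mathcal A_k:=A-X_kBB^T$. Because $X_kBB^TV_{\widebar m_{k+1}}$, $X_kBB^TX_k$ and $C^TC$ all have column range inside $\mbox{Range}(V_{\widebar m_{k+1}})$, one gets $P\mathcal R_{\widebar m_{k+1}}P=O$ (the Galerkin condition) and $(I-P)\mathcal R_{\widebar m_{k+1}}(I-P)=O$, so only the two mixed blocks survive and, using $(I-P)X_kBB^TV_{\widebar m_{k+1}}=O$,
$$\mathcal R_{\widebar m_{k+1}}=(I-P)\mathcal A_kV_{\widebar m_{k+1}}\,Y_{\widebar m_{k+1}}V_{\widebar m_{k+1}}^T+V_{\widebar m_{k+1}}Y_{\widebar m_{k+1}}\big((I-P)AV_{\widebar m_{k+1}}\big)^T=\mathcal V_{\widebar m_{k+1}+1}WV_{\widebar m_{k+1}}^T+V_{\widebar m_{k+1}}W^T\mathcal V_{\widebar m_{k+1}+1}^T,$$
with $W:=E_{\widebar m_{k+1}+1}^T\underline T_{\widebar m_{k+1}}Y_{\widebar m_{k+1}}$. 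Since $\mathcal V_{\widebar m_{k+1}+1}^TV_{\widebar m_{k+1}}=O$ and both $\mathcal V_{\widebar m_{k+1}+1}$ and $V_{\widebar m_{k+1}}$ have orthonormal columns, expanding $\|\mathcal R_{\widebar m_{k+1}}\|_F^2$ annihilates the cross term and leaves $2\|W\|_F^2$; hence $\|\mathcal R_{\widebar m_{k+1}}\|_F=\sqrt2\,\|W\|_F=\sqrt2\,\|Y_{\widebar m_{k+1}}\underline T_{\widebar m_{k+1}}^TE_{\widebar m_{k+1}+1}\|_F$, where the symmetry of $Y_{\widebar m_{k+1}}$ is used to transpose $W$. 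This is exactly~\eqref{res.projected}.

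The argument is essentially bookkeeping, and the two places that need care are: checking that $V_{\widebar m_k}$ is genuinely a leading block of $V_{\widebar m_{k+1}}$, so that padding $Y_{\widebar m_k}$ with a zero block gives the correct representation of $X_k$ in the larger basis; and verifying that the feedback correction $X_kBB^T$ is annihilated by $I-P$, so that the out-of-space part of $\mathcal A_kV_{\widebar m_{k+1}}$ comes only from $A$ and the residual depends solely on the last basis block, exactly as in the pure Lyapunov case treated in~\cite{Simoncini2007}.
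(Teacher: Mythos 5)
Your proof is correct and follows essentially the same route as the paper's: represent $X_k$ in the enlarged basis via the zero-padded $\mbox{diag}(Y_{\widebar m_k},O)$, project the Lyapunov equation with $V_{\widebar m_{k+1}}^T(\cdot)V_{\widebar m_{k+1}}$, and use the extended-Krylov Arnoldi relation to reduce the residual to the last basis block. The only difference is cosmetic: the paper wraps the argument in an induction on $k$ and delegates the residual identity to citations of \cite{Jaimoukha1994} and \cite[Proposition 3.3]{Simoncini2007}, whereas you carry out the $P$/$(I-P)$ splitting explicitly, which is a perfectly valid (and more self-contained) presentation of the same computation.
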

\begin{proof}
 We show the statements by induction on $k$.
 Since we suppose $A$ negative definite, for $k=0$ we have $X_0=O$ and denoting by 
 $Y_{\widebar m_{0}}:=V_1^TX_0V_1=O_{2q}$, $\mbox{Range}(V_1)=\mathbf{EK}_{1}^\square(A,C^T)$, we can write
 the projection of $AX+XA^T=-C^TC$ onto $\mathbf{EK}_{\widebar m_1}^\square(A,C^T)$, $m_1\geq 1$, as
\begin{equation}\label{eq1projected_proof}
Q_{\widebar m_{1}}Y+ 
YQ_{\widebar m_{1}}^T=\\
=-\mbox{diag}(Y_{\widebar m_{0}},O_{2q(\widebar m_{1}-1)})B_{\widebar m_{1}}B_{\widebar m_{1}}^T
\mbox{diag}(Y_{\widebar m_{0}},O_{2q(\widebar m_{1}-1)})-E_1\pmb{\gamma\gamma}^TE_1^T,
\end{equation}
 with $Q_{\widebar m_{1}}=T_{\widebar m_{1}}-\mbox{diag}(Y_{\widebar m_{0}},O_{2q(\widebar m_{1}-1)})B_{\widebar m_{1}}
B_{\widebar m_{1}}^T$.
Moreover,
$$
\|A(V_{\widebar m_{1}}Y_{\widebar m_{1}}V_{\widebar m_{1}}^T)+
(V_{\widebar m_{1}}Y_{\widebar m_{1}}V_{\widebar m_{1}}^T)A^T+C^TC\|_F
=\sqrt{2}\|Y_{\widebar m_{1}}\underline{T}_{\widebar m_{1}}^T
E_{\widebar m_{1}+1}\|_F,
$$
 where $Y_{\widebar m_{1}}$ denotes the solution to \eqref{eq1projected_proof}. See, e.g., \cite{Jaimoukha1994}.
 
 We now suppose that the statements hold for a certain $k>0$ and we show them for $k+1$.
 If $X_k=V_{\widebar m_k}Y_{\widebar m_k}V_{\widebar m_k}^T$, 
 $\mbox{Range}(V_{\widebar m_k})=\mathbf{EK}_{\widebar m_k}^\square(A,C^T)$, 
 then we can write the $(k+1)$-th equation of the Newton scheme as 
 \begin{equation}\label{eqk_proof}  
(A-V_{\widebar m_k}Y_{\widebar m_k}V_{\widebar m_k}^TBB^T)X+X(A-V_{\widebar m_k}Y_{\widebar m_k}V_{\widebar m_k}^TBB^T)^T
 =-V_{\widebar m_k}Y_{\widebar m_k}V_{\widebar m_k}^TBB^TV_{\widebar m_k}Y_{\widebar m_k}V_{\widebar m_k}^T-C^TC.
 \end{equation}
 If $\text{Range}(V_{\widebar m_{k+1}})=\mathbf{EK}_{\widebar m_{k+1}}^\square(A,C^T)$  for 
 $\widebar m_{k+1}\geq \widebar m_{k}$, then 
 $V_{\widebar m_{k+1}}^TV_{\widebar m_{k}}=[I_{ 2q\widebar m_{k}};O_{2q({\widebar m_{k+1}}-\widebar m_{k})\times 2qm_{k}}]$.
 Therefore, pre and post multiplying equation \eqref{eqk_proof} by 
 $V_{\widebar m_{k+1}}^T$ and $V_{\widebar m_{k+1}}$ respectively, we get 
\begin{multline*}
(T_{\widebar m_{k+1}}-[Y_{\widebar m_{k}};O_{2q({\widebar m_{k+1}}-\widebar m_{k})\times 2qm_{k}}]V_{\widebar m_k}^TB
B_{\widebar m_{k+1}}^T)Y+ 
Y(T_{\widebar m_{k+1}}-[Y_{\widebar m_{k}};O_{2q({\widebar m_{k+1}}-\widebar m_{k})\times 2qm_{k}}]V_{\widebar m_k}^TB
B_{\widebar m_{k+1}}^T)^T=\\
=-[Y_{\widebar m_{k}};O_{2q({\widebar m_{k+1}}-\widebar m_{k})\times 2qm_{k}}]V_{\widebar m_k}^TBB^TV_{\widebar m_k}
[Y_{\widebar m_{k}};O_{2q({\widebar m_{k+1}}-\widebar m_{k})\times 2qm_{k}}]^T-E_1\pmb{\gamma\gamma}^TE_1^T,
\end{multline*}
 and noticing that 
 $[Y_{\widebar m_{k}};O_{(2q{\widebar m_{k+1}}-\widebar m_{k})\times 2qm_{k}}]V_{\widebar m_k}^T= \mbox{diag}(Y_{\widebar m_{k}},O_{2q(\widebar m_{k+1}-\widebar m_{k})})V_{\widebar m_{k+1}}^T$ we have the result.
 

In conclusion, if $Y_{\widebar m_{k+1}}$ denotes the solution to \eqref{eq.kprojected},
it is easy to show that the residual norm of the Lyapunov equation
$(A-X_{k}BB^T)(V_{\widebar m_{k+1}}Y_{\widebar m_{k+1}}V_{\widebar m_{k+1}}^T)+
(V_{\widebar m_{k+1}}Y_{\widebar m_{k+1}}V_{\widebar m_{k+1}}^T)(A-X_{k}BB^T)^T=-X_{k}BB^TX_{k}-C^TC$ can be computed 
very cheaply as in \eqref{res.projected}.
 The proof follows the same line of the proof in \cite[Proposition 3.3]{Simoncini2007}  
recalling that $X_{k}=V_{\widebar m_{k}}Y_{\widebar m_{k}}V_{\widebar m_{k}}^T=
 V_{\widebar m_{k+1}}\mbox{diag}(Y_{\widebar m_{k}},O_{2q(\widebar m_{k+1}-\widebar m_{k})})V_{\widebar m_{k+1}}^T$.
 \begin{flushright}
 $\square$
\end{flushright}

 \end{proof}

A similar result can be shown also in case of the Newton-Kleinman method equipped with a line search. 
Indeed, at the $(k+1)$-th iteration, we define $X_{k+1}$ as in \eqref{Newton_step_linesearch} where
$Z_k=\widetilde X_{k+1}-X_k$  and $\widetilde X_{k+1}$ is the solution to~\eqref{Newton_step_inexact}.
Assuming $X_k=V_{\widebar m_k}Y_{\widebar m_k}V_{\widebar m_k}^T$, 
 $\mbox{Range}(V_{\widebar m_k})=\mathbf{EK}_{\widebar m_k}^\square(A,C^T)$,
  the projection of 
 $$(A-X_{k}BB^T)X+X(A-X_{k}BB^T)^T=-X_{k}BB^TX_{k}-C^TC,$$
 onto $\mathbf{EK}_{\widebar m_{k+1}}^\square(A,C^T)=\mbox{Range}(V_{\widebar m_{k+1}})$,
 $\widebar m_{k+1}\geq \widebar m_{k}$, is still of the form \eqref{eq.kprojected}.
 This means that the residual norm can be still computed as in \eqref{res.projected}.
 Once $\|L_{k+1}\|_F=\sqrt{2}\|\widetilde Y_{\widebar m_{k+1}}\underline{T}_{\widebar m_{k+1}}^T
E_{\widebar m_{k+1}+1}\|_F\leq \eta_k\|\mathcal{R}(X_k)\|_F$, where $\widetilde Y_{\widebar m_{k+1}}$ is the solution 
to \eqref{eq.kprojected},
we define $\widetilde X_{k+1}:=V_{\widebar m_{k+1}}
\widetilde Y_{\widebar m_{k+1}} V_{\widebar m_{k+1}}^T$, and we have
$$\begin{array}{rll}
X_{k+1}&=&X_k+\lambda_kZ_k=(1-\lambda_k)X_k+\lambda_k\widetilde X_{k+1}\\
&=&(1-\lambda_k)V_{\widebar m_k}Y_{\widebar m_k}V_{\widebar m_k}^T+\lambda_kV_{\widebar m_{k+1}}
\widetilde Y_{\widebar m_{k+1}} V_{\widebar m_{k+1}}^T\\
&=&(1-\lambda_k)V_{\widebar m_{k+1}}\mbox{diag}(Y_{\widebar m_k},O_{2q(\widebar m_{k+1}-\widebar m_{k})})V_{\widebar m_{k+1}}^T+\lambda_kV_{\widebar m_{k+1}}
\widetilde Y_{\widebar m_{k+1}} V_{\widebar m_{k+1}}^T \\
&=&V_{\widebar m_{k+1}}\left((1-\lambda_k)\cdot\mbox{diag}(Y_{\widebar m_k},O_{2q(\widebar m_{k+1}-\widebar m_{k})})
+\lambda_k\widetilde Y_{\widebar m_{k+1}} \right)V_{\widebar m_{k+1}}^T\\
&=&V_{\widebar m_{k+1}}Y_{\widebar m_{k+1}} V_{\widebar m_{k+1}}^T, 
\end{array}
$$
where $Y_{\widebar m_{k+1}}:=(1-\lambda_k)\cdot\mbox{diag}(Y_{\widebar m_k},O_{2q(\widebar m_{k+1}-\widebar m_{k})})
+\lambda_k\widetilde Y_{\widebar m_{k+1}}$.

We are thus left with showing that the line search, i.e., the computation of the $\lambda_k$'s, can be cheaply
carried out on the current subspace with no need to go back to $\RR^n$.
In \cite{Benner2016}, the authors show that $\|\mathcal{R}(X_k+\lambda Z_k)\|_F^2$ is a quartic polynomial in $\lambda$ of the form
\begin{equation}\label{polynomial_linesearch}
 p_k(\lambda)=\|\mathcal{R}(X_k+\lambda Z_k)\|_F^2=
(1-\lambda)^2\alpha_k+\lambda^2\beta_k+\lambda^4\delta_k+2\lambda(1-\lambda)\gamma_k-2\lambda^2(1-\lambda)\epsilon_k
-2\lambda^3\zeta_k,
\end{equation}
where
\begin{equation}\label{polynomial_coefficients}
 \begin{array}{ll}
  \alpha_k=\|\mathcal{R}(X_k)\|_F^2, & \delta_k=\|Z_kBB^TZ_k\|_F^2, \\
  \beta_k=\|L_{k+1}\|_F^2,&  \epsilon_k=\langle \mathcal{R}(X_k),Z_kBB^TZ_k\rangle_F,\\
  \gamma_k=\langle \mathcal{R}(X_k),L_{k+1}\rangle_F, & \zeta_k=\langle L_{k+1},Z_kBB^TZ_k\rangle_F.
 \end{array}
\end{equation}
If $\|L_{k+1}\|_F=0$, the polynomial in \eqref{polynomial_linesearch} has a local minimizer $\lambda_k\in(0,2]$ such that,
if $A-X_kBB^T$ is stable and $X_{k+1}$ is computed by using such $\lambda_k$, also $A-X_{k+1}BB^T$ is stable.
See \cite{Benner1998}. However, in \cite{Benner2016} the authors state that, in general, this no longer holds if $\|L_{k+1}\|_F\neq0$. 
 Nevertheless, we show that, in our particular framework, $p_k(\lambda)$ still has a local minimizer
 in $(0,2]$ and we can thus compute the step-size as
%
\begin{equation}\label{stepsizeselection}
\lambda_k=\argmin_{(0,2]}p_k(\lambda). 
\end{equation}
We first derive new expressions for the coefficients \eqref{polynomial_coefficients} that will help us to prove the existence of a local minimizer of $p_k(\lambda)$ in $(0,2]$.

\begin{Prop}\label{polynomial_coefficients_computation}
 The coefficients in \eqref{polynomial_coefficients} are such that
 
 \begin{align*}
  \alpha_k
    =&\left\|T_{\widebar m_k}Y_{\widebar m_k}+Y_{\widebar m_k}T_{\widebar m_k}^T-Y_{\widebar m_k}B_{\widebar m_k}B_{\widebar m_k}^TY_{\widebar m_k}+E_1\pmb{\gamma\gamma}^TE_1^T\right\|_F^2+2\left\|Y_{\widebar m_k}\underline{T}_{\widebar m_k}^TE_{\widebar m_k+1}\right\|_F^2, \\
    \\
 \beta_k=&2\left\|\widetilde Y_{\widebar m_{k+1}}\underline{T}_{\widebar m_{k+1}}E_{\widebar m_{k+1}+1}\right\|_F^2,\\
 \\
 \gamma_k=&\left\{ \begin{array}{l}
 2 \langle E_{\widebar m_k+1}^T\underline{T}_{\widebar m_k}Y_{\widebar m_k}, 
 E_{\widebar m_{k+1}+1}^T\underline{T}_{\widebar m_{k+1}}
 \widetilde Y_{\widebar m_{k+1}}
\rangle_F, \quad \mbox{if } \widebar m_{k+1}=\widebar m_{k},\\
 0, \quad \mbox{if } \widebar m_{k+1}>\widebar m_{k},
\end{array}\right.\\
\\
 \delta_k=&
\left\|\left(\widetilde Y_{\widebar m_{k+1}}-\mbox{diag}(Y_{\widebar m_{k}},O_{2q(\widebar m_{k+1}-\widebar m_{k})})\right)B_{\widebar m_{k+1}}B_{\widebar m_{k+1}}^T
\left(\widetilde Y_{\widebar m_{k+1}}
-\mbox{diag}(Y_{\widebar m_{k}},O_{2q(\widebar m_{k+1}-\widebar m_{k})})\right)\right\|^2_F,
 \end{align*}

$$
 \epsilon_k
 =\left\{
\begin{array}{l}
\begin{array}{l}
\left\langle T_{\widebar m_{k}}Y_{\widebar m_{k}}+Y_{\widebar m_{k}}T_{\widebar m_{k}}^T-Y_{\widebar m_{k}}B_{\widebar m_{k}}B_{\widebar m_{k}}^TY_{\widebar m_{k}}+E_1\pmb{\gamma\gamma}^TE_1^T,\right.\\
\left.\left(\widetilde Y_{\widebar m_{k+1}}-Y_{\widebar m_{k}}\right)B_{\widebar m_{k+1}}B_{\widebar m_{k+1}}^T\left(\widetilde Y_{\widebar m_{k+1}} - Y_{\widebar m_{k}}\right)\right\rangle_F,
\end{array} \quad \mbox{if } \widebar m_{k+1}= \widebar m_{k},\\
\\
\begin{array}{l}
\left\langle T_{\widebar m_{k}}Y_{\widebar m_{k}}
  +Y_{\widebar m_{k}}T_{\widebar m_{k}}^T
 -Y_{\widebar m_{k}}B_{\widebar m_{k}}B_{\widebar m_{k}}^TY_{\widebar m_{k}}
   +E_1\pmb{\gamma\gamma}^TE_1^T,\right.\\
   
[I_{2q\widebar m_k},O_{2q\widebar m_k\times 2q(\widebar m_{k+1}-\widebar m_k)}] \left(\widetilde Y_{\widebar m_{k+1}}-\mbox{diag}(Y_{\widebar m_{k}},O_{2q(\widebar m_{k+1}-\widebar m_{k})})\right)B_{\widebar m_{k+1}}B_{\widebar m_{k+1}}^T\left(\widetilde Y_{\widebar m_{k+1}} \right. \\
 \left.\left. -\mbox{diag}(Y_{\widebar m_{k}},O_{2q(\widebar m_{k+1}-\widebar m_{k})})\right)[I_{2q\widebar m_k};O_{2q(\widebar m_{k+1}-\widebar m_k)\times 2q\widebar m_k}]\right\rangle_F\\
 +\left\langle E_{\widebar m_k+1}E_{\widebar m_k+1}^T\underline{T}_{\widebar m_k}[Y_{\widebar m_k},O_{2q\widebar m_k\times 2q}]+[Y_{\widebar m_k};O_{2q\times 2q\widebar m_k}]\underline{T}_{\widebar m_k}^TE_{\widebar m_k+1}E_{\widebar m_k+1}^T,\right. \\

 [I_{2q(\widebar m_k+1)},O_{2q(\widebar m_k+1)\times 2q(\widebar m_{k+1}-\widebar m_k-1)}]\left(\widetilde Y_{\widebar m_{k+1}}-\mbox{diag}(Y_{\widebar m_{k}},O_{2q(\widebar m_{k+1}-\widebar m_{k})})\right)B_{\widebar m_{k+1}}
    B_{\widebar m_{k+1}}^T\left(\widetilde Y_{\widebar m_{k+1}}\right.\\
    
    \left.\left.- \mbox{diag}(Y_{\widebar m_{k}},O_{2q(\widebar m_{k+1}-\widebar m_{k})})\right)[I_{2q(\widebar m_k+1)};O_{2q(\widebar m_{k+1}-\widebar m_k-1)\times 2q(\widebar m_k+1)}]\right\rangle_F.
\end{array}\hspace{-1.2cm}
 \mbox{if } \widebar m_{k+1}> \widebar m_{k},

\end{array}\right.
$$
and
$$\zeta_k=0.$$
\end{Prop}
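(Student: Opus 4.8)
The plan is to rewrite each of the six quantities in \eqref{polynomial_coefficients} in terms of the small matrices carried by the extended Krylov subspace, in the spirit of the proof of Proposition~\ref{Prop_projectedeqs}. Throughout I keep the representations guaranteed by Corollary~\ref{Cor1} and by the discussion preceding the statement: $X_k=V_{\widebar m_k}Y_{\widebar m_k}V_{\widebar m_k}^T$, $\widetilde X_{k+1}=V_{\widebar m_{k+1}}\widetilde Y_{\widebar m_{k+1}}V_{\widebar m_{k+1}}^T$ with $\mbox{Range}(V_{\widebar m_{k+1}})=\mathbf{EK}_{\widebar m_{k+1}}^\square(A,C^T)$ and $\widebar m_{k+1}\geq\widebar m_k$, so that $Z_k=\widetilde X_{k+1}-X_k=V_{\widebar m_{k+1}}(\widetilde Y_{\widebar m_{k+1}}-\mbox{diag}(Y_{\widebar m_k},O))V_{\widebar m_{k+1}}^T$. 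The only extra ingredient is the Arnoldi-type relation $AV_{\widebar m_j}=V_{\widebar m_j+1}\underline T_{\widebar m_j}$, $\underline T_{\widebar m_j}=V_{\widebar m_j+1}^TAV_{\widebar m_j}$, which holds since $A$ maps $\mathbf{EK}_{\widebar m_j}^\square(A,C^T)$ into $\mathbf{EK}_{\widebar m_j+1}^\square(A,C^T)$; its block form stacks $T_{\widebar m_j}=V_{\widebar m_j}^TAV_{\widebar m_j}$ on top of $E_{\widebar m_j+1}^T\underline T_{\widebar m_j}=\mathcal V_{\widebar m_j+1}^TAV_{\widebar m_j}$. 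I also use $C^T=V_1\pmb\gamma$ and $B_{\widebar m_j}=V_{\widebar m_j}^TB$.

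The first step is to put $\mathcal{R}(X_k)$, $L_{k+1}$ and $Z_kBB^TZ_k$ into structured-residual form. Substituting the representations into $\mathcal{R}(X_k)=AX_k+X_kA^T-X_kBB^TX_k+C^TC$ and using $AV_{\widebar m_k}=V_{\widebar m_k+1}\underline T_{\widebar m_k}$ gives
\[
\mathcal{R}(X_k)=V_{\widebar m_k}N_kV_{\widebar m_k}^T+\mathcal V_{\widebar m_k+1}\big(E_{\widebar m_k+1}^T\underline T_{\widebar m_k}\big)Y_{\widebar m_k}V_{\widebar m_k}^T+V_{\widebar m_k}Y_{\widebar m_k}\underline T_{\widebar m_k}^TE_{\widebar m_k+1}\mathcal V_{\widebar m_k+1}^T,
\]
with $N_k:=T_{\widebar m_k}Y_{\widebar m_k}+Y_{\widebar m_k}T_{\widebar m_k}^T-Y_{\widebar m_k}B_{\widebar m_k}B_{\widebar m_k}^TY_{\widebar m_k}+E_1\pmb{\gamma\gamma}^TE_1^T$, i.e. the residual of the projected equation \eqref{eq.kprojected} evaluated at $Y_{\widebar m_k}$. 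Since $\widetilde Y_{\widebar m_{k+1}}$ solves \eqref{eq.kprojected} exactly, the in-space block of $L_{k+1}$ vanishes and only the tail survives,
\[
L_{k+1}=\mathcal V_{\widebar m_{k+1}+1}\big(E_{\widebar m_{k+1}+1}^T\underline T_{\widebar m_{k+1}}\big)\widetilde Y_{\widebar m_{k+1}}V_{\widebar m_{k+1}}^T+V_{\widebar m_{k+1}}\widetilde Y_{\widebar m_{k+1}}\underline T_{\widebar m_{k+1}}^TE_{\widebar m_{k+1}+1}\mathcal V_{\widebar m_{k+1}+1}^T,
\]
which is just \eqref{res.projected} of Proposition~\ref{Prop_projectedeqs} read with $\widetilde Y_{\widebar m_{k+1}}$ in place of $Y_{\widebar m_{k+1}}$. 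Finally $Z_kBB^TZ_k=V_{\widebar m_{k+1}}M_kV_{\widebar m_{k+1}}^T$ with $M_k:=(\widetilde Y_{\widebar m_{k+1}}-\mbox{diag}(Y_{\widebar m_k},O))B_{\widebar m_{k+1}}B_{\widebar m_{k+1}}^T(\widetilde Y_{\widebar m_{k+1}}-\mbox{diag}(Y_{\widebar m_k},O))$, a matrix supported entirely on $\mbox{Range}(V_{\widebar m_{k+1}})$, with no tail.

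Given these three forms, the norm-type coefficients follow from the isometry $\|V_{\widebar m_j}WV_{\widebar m_j}^T\|_F=\|W\|_F$ and from the mutual orthogonality in $\langle\cdot,\cdot\rangle_F$ of the blocks above (one in-space block and two symmetric tail blocks carrying the factor $\mathcal V_{\widebar m_\cdot+1}$, orthogonal to $V_{\widebar m_\cdot}$): Pythagoras gives $\alpha_k=\|N_k\|_F^2+2\|Y_{\widebar m_k}\underline T_{\widebar m_k}^TE_{\widebar m_k+1}\|_F^2$, and $\beta_k$, $\delta_k$ are read off at once. For the cross terms I exploit two support facts: $Z_kBB^TZ_k$ has no component outside $\mbox{Range}(V_{\widebar m_{k+1}})$, whereas $L_{k+1}$ is orthogonal to every matrix $V_{\widebar m_{k+1}}WV_{\widebar m_{k+1}}^T$ by the Galerkin condition; hence $\zeta_k=\langle L_{k+1},Z_kBB^TZ_k\rangle_F=0$. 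The same dichotomy yields $\gamma_k$: if $\widebar m_{k+1}>\widebar m_k$ then $\mathcal{R}(X_k)$, being supported on $\mbox{Range}(V_{\widebar m_k+1})\subseteq\mbox{Range}(V_{\widebar m_{k+1}})$, is orthogonal to $L_{k+1}$, so $\gamma_k=0$; if $\widebar m_{k+1}=\widebar m_k$ the two matrices share the tail direction $\mathcal V_{\widebar m_k+1}$ and only the two matching symmetric pieces survive, giving $2\langle E_{\widebar m_k+1}^T\underline T_{\widebar m_k}Y_{\widebar m_k},E_{\widebar m_{k+1}+1}^T\underline T_{\widebar m_{k+1}}\widetilde Y_{\widebar m_{k+1}}\rangle_F$. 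For $\epsilon_k=\langle\mathcal{R}(X_k),Z_kBB^TZ_k\rangle_F$ only the part of $\mathcal{R}(X_k)$ lying in $\mbox{Range}(V_{\widebar m_{k+1}})$ contributes: when $\widebar m_{k+1}=\widebar m_k$ that is just $\langle N_k,M_k\rangle_F$ (the zero block of $M_k$ being absent); when $\widebar m_{k+1}>\widebar m_k$ one must additionally include the two symmetric rank-$2q$ tail blocks of $\mathcal{R}(X_k)$, which now sit at block row/column $\widebar m_k+1$ inside $V_{\widebar m_{k+1}}$, and pairing them against the appropriate leading blocks of $M_k$ (extracted by $[I_{2q\widebar m_k},O]$ and $[I_{2q(\widebar m_k+1)},O]$) reproduces the two-summand expression in the statement.

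I expect the only genuine difficulty to be combinatorial: in the case $\widebar m_{k+1}>\widebar m_k$, keeping track of which leading blocks of $M_k$ pair with $N_k$ and which with the $\underline T_{\widebar m_k}$-tail pieces of $\mathcal{R}(X_k)$, and inserting the correct restriction/embedding operators $[I,O]$, $[I;O]$ so that the resulting projector-sandwiched Frobenius inner products match the (lengthy) formula for $\epsilon_k$. Everything else is a routine consequence of orthonormality of the basis and of the Galerkin-induced absence of an in-space block in $L_{k+1}$; an induction on $k$ (as in Proposition~\ref{Prop_projectedeqs}) is not strictly needed once the representations of Corollary~\ref{Cor1} are available, but may be kept for uniformity.
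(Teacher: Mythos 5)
Your proposal follows essentially the same route as the paper's own proof in the Appendix: expand $\mathcal{R}(X_k)$, $L_{k+1}$ and $Z_kBB^TZ_k$ via the Arnoldi relation into an in-space block plus symmetric tail blocks, then use orthogonality of $\mathcal{V}_{\widebar m_\cdot+1}$ to $V_{\widebar m_\cdot}$ (Pythagoras for the norms, vanishing cross terms for $\gamma_k$, $\zeta_k$, and the case split $\widebar m_{k+1}=\widebar m_k$ versus $\widebar m_{k+1}>\widebar m_k$ for $\gamma_k$ and $\epsilon_k$). The argument is correct and matches the paper's, including the identification of the only delicate point — tracking the restriction/embedding matrices $[I,O]$, $[I;O]$ in the second summand of $\epsilon_k$.
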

See the Appendix for the proof.

Proposition~\ref{polynomial_coefficients_computation} shows
how only matrices of size (at most) $2q(\widebar m_{k+1}+1)$
are actually involved in the computation of the coefficients \eqref{polynomial_coefficients} and we just  
need information that is available
in the current subspace $\mathbf{EK}_{\widebar m_{k+1}}^\square(A,C^T)$ to define  $p_k(\lambda)$ without any 
backward transformations to $\RR^n$.

By exploiting the expressions in Proposition~\ref{polynomial_coefficients_computation} we are now able to show the existence of a local minimizer $\lambda_k\in(0,2]$ of $p_k(\lambda)$.
\begin{Prop}\label{existence_lambda}
 If $\widebar m_{k+1}>\widebar m_k$, the polynomial $p_k(\lambda)$ has a local minimizer $\lambda_k\in(0,2]$.
\end{Prop}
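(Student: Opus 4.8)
The plan is to use the special form that $p_k$ takes when $\widebar m_{k+1}>\widebar m_k$, together with a Cauchy--Schwarz estimate of the derivative of $p_k$ at the two endpoints $\lambda=0$ and $\lambda=2$. First I would invoke Proposition~\ref{polynomial_coefficients_computation}: since $\widebar m_{k+1}>\widebar m_k$ one has $\gamma_k=0$ and $\zeta_k=0$, so \eqref{polynomial_linesearch} reduces to
$$
p_k(\lambda)=(1-\lambda)^2\alpha_k+\lambda^2\beta_k+\lambda^4\delta_k-2\lambda^2(1-\lambda)\epsilon_k .
$$
We may assume $\alpha_k=\|\mathcal R(X_k)\|_F^2>0$, otherwise $X_k$ already solves \eqref{eq.Riccati} and there is nothing to prove. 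Differentiating gives
$$
p_k'(\lambda)=-2(1-\lambda)\alpha_k+2\lambda\beta_k+4\lambda^3\delta_k-2(2\lambda-3\lambda^2)\epsilon_k ,
$$
hence $p_k'(0)=-2\alpha_k<0$ and $p_k'(2)=2\alpha_k+4\beta_k+32\delta_k+16\epsilon_k$.

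The key step is to show $p_k'(2)\geq 0$. Recalling from \eqref{polynomial_coefficients} that $\alpha_k=\|\mathcal R(X_k)\|_F^2$, $\delta_k=\|Z_kBB^TZ_k\|_F^2$ and $\epsilon_k=\langle\mathcal R(X_k),Z_kBB^TZ_k\rangle_F$, the Cauchy--Schwarz inequality yields $\epsilon_k\geq-\sqrt{\alpha_k\delta_k}$, so
$$
p_k'(2)\;\geq\; 2\alpha_k+4\beta_k+32\delta_k-16\sqrt{\alpha_k\delta_k}\;=\;2\bigl(\sqrt{\alpha_k}-4\sqrt{\delta_k}\bigr)^2+4\beta_k\;\geq\;0 .
$$
Since $p_k$ is continuous on the compact interval $[0,2]$, it attains there a minimum at some $\lambda_k$; because $p_k'(0)<0$ we have $\lambda_k\neq 0$, so $\lambda_k\in(0,2]$ and moreover $p_k(\lambda_k)<p_k(0)=\|\mathcal R(X_k)\|_F^2$. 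If $\lambda_k\in(0,2)$ then $p_k'(\lambda_k)=0$ and $p_k\geq p_k(\lambda_k)$ on a full neighbourhood of $\lambda_k$, so $\lambda_k$ is a local minimizer of $p_k$ and we are done; this also legitimizes taking the step size as in \eqref{stepsizeselection}.

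It remains to dispose of the boundary case $\lambda_k=2$. There the right-endpoint optimality condition forces $p_k'(2)\leq0$, which with the inequality above gives $p_k'(2)=0$; then the displayed chain of (in)equalities can only hold with $\beta_k=0$, with Cauchy--Schwarz tight ($\epsilon_k=-\sqrt{\alpha_k\delta_k}$), and with $\sqrt{\alpha_k}=4\sqrt{\delta_k}$, i.e. $\alpha_k=16\delta_k$ and $\epsilon_k=-4\delta_k$. Substituting these back into the simplified $p_k$ and simplifying (equivalently, checking $p_k'(\lambda)=4\delta_k(\lambda-2)^3$) gives $p_k(\lambda)=\delta_k(\lambda-2)^4$ with $\delta_k>0$ (as $\delta_k=0$ would force $\alpha_k=0$), so $\lambda=2$ is the global — hence local — minimizer, closing the proof. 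I expect the main obstacle to be precisely establishing $p_k'(2)\geq0$ (recognizing the completed square after Cauchy--Schwarz) and then handling this degenerate endpoint situation; once the coefficient identities of Proposition~\ref{polynomial_coefficients_computation} are available, everything else is elementary calculus.
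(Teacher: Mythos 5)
Your proof is correct and follows essentially the same route as the paper: invoke Proposition~\ref{polynomial_coefficients_computation} to get $\gamma_k=\zeta_k=0$, then verify $p_k'(0)=-2\alpha_k<0$ and $p_k'(2)\geq 0$. The only difference is in how the latter is established --- where you bound $2\alpha_k+32\delta_k+16\epsilon_k$ from below via Cauchy--Schwarz and the completed square $2(\sqrt{\alpha_k}-4\sqrt{\delta_k})^2$, the paper recognizes this quantity directly as $2\|\mathcal{R}(X_k)+4Z_kBB^TZ_k\|_F^2$; your extra treatment of the degenerate endpoint $\lambda_k=2$ is additional rigor the paper omits.
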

\begin{proof}
 If $\widebar m_{k+1}>\widebar m_k$, by exploiting the 
 expressions in Proposition~\ref{polynomial_coefficients_computation}, the polynomial
 $p_k(\lambda)$ in \eqref{polynomial_linesearch} can be written as
 $$ p_k(\lambda)=
(1-\lambda)^2\alpha_k+\lambda^2\beta_k+\lambda^4\delta_k-2\lambda^2(1-\lambda)\epsilon_k,
$$
whose first derivative is
 $$ p'_k(\lambda)=
-2(1-\lambda)\alpha_k+2\lambda\beta_k+4\lambda^3\delta_k-4\lambda\epsilon_k+6\lambda^2\epsilon_k.
$$
Therefore, $p'_k(0)=-2\alpha_k<0$ as $\alpha_k=\|\mathcal{R}(X_k)\|_F^2>0$. Notice that if $\alpha_k=0$,
this means that $X_k$ is the exact solution to \eqref{eq.Riccati} and we do not need to compute any step-size $\lambda_k$.

Moreover,
$$p'_k(2)=2\alpha_k+4\beta_k+32\delta_k+16\epsilon_k\geq 2\alpha_k+32\delta_k+16\epsilon_k=2\|\mathcal{R}(X_k)+4Z_kBB^TZ_k\|_F^2,$$
as $\beta=\|L_{k+1}\|^2_F\geq 0$. Since $\|\mathcal{R}(X_k)+4Z_kBB^TZ_k\|_F^2\geq 0$,
also $p'_k(2)\geq 0$ and there exists a local minimizer $\lambda_k$ of $p_k$ in $(0,2]$.
\begin{flushright}
 $\square$
\end{flushright}

\end{proof}

In our numerical experience it is very rare to have $\widebar m_{k+1}=\widebar m_k$. Indeed, it is unlikely that the space used for solving the $k$-th Lyapunov equation in the Newton sequence contains enough spectral information to solve also the $(k+1)$-th one. This may happen for the very first couple of Lyapunov equations, i.e., it has happened that $\widebar m_2=\widebar m_1$. Since for the subsequent equations we have to expand the space anyway, we suggest performing an extra iteration when 
$\widebar m_{k+1}=\widebar m_k$, so that $\widebar m_{k+1}>\widebar m_k$, and then compute $\lambda_k$ as in \eqref{stepsizeselection}. 

Once $\lambda_k$ is computed, defining $Y_{\widebar m_{k+1}}:=(1-\lambda_k)\cdot\mbox{diag}(Y_{\widebar m_k},O_{2q(\widebar m_{k+1}-\widebar m_{k})})
+\lambda_k\widetilde Y_{\widebar m_{k+1}}$, we can cheaply 
evaluate the residual norm of the new approximate solution 
$X_{k+1}:=V_{\widebar m_{k+1}}Y_{\widebar m_{k+1}}V_{\widebar m_{k+1}}^T$ to the Riccati equation by
\begin{align}\label{Riccati_residualnorm}
 \|\mathcal{R}(X_{k+1})\|_F^2=& \|\mathcal{R}(V_{\widebar m_{k+1}}Y_{\widebar m_{k+1}}V_{\widebar m_{k+1}}^T)\|_F^2 \nonumber\\
 =&\|T_{\widebar m_{k+1}}Y_{\widebar m_{k+1}}+Y_{\widebar m_{k+1}}T_{\widebar m_{k+1}}^T-Y_{\widebar m_{k+1}}B_{\widebar m_{k+1}}B_{\widebar m_{k+1}}^TY_{\widebar m_{k+1}}+E_1\pmb{\gamma\gamma}^TE_1^T\|_F^2\\
 &+2\|Y_{\widebar m_{k+1}}\underline{T}_{\widebar m_{k+1}}^TE_{\widebar m_{k+1}+1}\|_F^2. \notag
\end{align}
Moreover, this value can be used in the next iteration as $\alpha_{k+1}$ if necessary.

The complete implementation\footnote{Many subscripts have been omitted to make the algorithm more readable.} of our new iterative framework is summarized in Algorithm~\ref{PNK_EK_algorithm} where the residual norms of the Riccati operator $\|\mathcal{R}(\cdot)\|_F$ are cheaply computed as in \eqref{Riccati_residualnorm}.
\setcounter{AlgoLine}{0}
\begin{algorithm}
\caption{Projected Newton-Kleinman method with extended Krylov (PNK\_EK).\label{PNK_EK_algorithm}}
\SetKwInOut{Input}{input}\SetKwInOut{Output}{output}
\Input{$A\in\mathbb{R}^{n\times n},$ $A<0$, $B\in\mathbb{R}^{n\times p}$, $C\in\mathbb{R}^{q\times n}$, $m_{\max}$, $\epsilon>0$, $\widebar \eta\in(0,1)$ $\alpha\in(0,1-\widebar \eta).$}
\Output{$P_{k+1}\in\mathbb{R}^{n\times t}$, $t\ll n$, $P_{k+1}P_{k+1}^T=X_{k+1}\approx X$ approximate solution to \eqref{eq.Riccati}.}
\BlankLine
   \nl Set $Y_0=O_{2q}$, $\widebar m=1$ and $k=0$\\
  \nl  Perform economy-size QR, $[C^T,A^{-1}C^T]=[\mathcal{V}_1^{(1)},\mathcal{V}_1^{(2)}][
  {\pmb \gamma}, {\pmb \theta}]$, ${\pmb \gamma},{\pmb \theta}\in\RR^{2q\times q}$\\                                                                                          \nl Set $V_1= [\mathcal{V}_1^{(1)},\mathcal{V}_1^{(2)}]$ \\ 
  \nl Select $\eta_0\in(0,\widebar \eta)$\\
  \For{$m=1, 2,\dots,$ till $m_{\max}$}{
  \nl Compute next basis block $\mathcal{V}_{m+1}$ as in \cite{Simoncini2007} and set $V_{m+1}=[V_{{m}},\mathcal{V}_{m+1}]$ \\
  \nl  Update $T_{m}=V_{m}^TAV_{m}$ as in \cite{Simoncini2007} and $B_{m}=V_{m}^TB$  \\
  \nl Set $Q_m:=T_{m}-\mbox{diag}(Y_k,O_{2q(m-\widebar m)})B_mB_m^T$ \\
  \nl Solve $$Q_m\widetilde Y+
  \widetilde YQ_m^T=-\mbox{diag}(Y_k,O_{2q(m-\widebar m)})B_mB_m^T\mbox{diag}(Y_k,O_{2q(m-\widebar m)})-E_1\pmb{\gamma\gamma}^TE_1^T$$\label{line_alg_innersolve} 
  \If{$\sqrt{2}\|\widetilde Y\underline{T}_{m}^TE_{m+1}\|_F\leq\eta_k\|\mathcal{R}(V_{\widebar m}Y_kV_{\widebar m}^T)\|_F$\label{Riccati_res1}}{
  \nl Compute the coefficients \eqref{polynomial_coefficients} as in Proposition~\ref{polynomial_coefficients_computation}\\
  \nl Compute $\lambda_k$ as in \eqref{stepsizeselection}\\
  \nl Set $Y_{k+1}=(1-\lambda_k)\cdot\mbox{diag}(Y_k,O_{2q(m-\widebar m)})+\lambda_k\widetilde Y$\\ 
  \If{$\|\mathcal{R}(V_{m}Y_{k+1}V_{m}^T)\|_F<\epsilon\cdot\|C^TC\|_F$\label{Riccati_res2}}{ 
\nl \textbf{Break} and go to \textbf{15} }
  \nl Set $k=k+1$ and $\widebar m=m$ \\
  \nl Select $\eta_k\in(0,\widebar \eta)$
  }
  }
  \nl Factorize $Y_{k+1}$ and retain $\widehat Y_{k+1}\in\mathbb{R}^{2mq\times t}$, $t\leq 2mq$\\
  \nl Set $P_{k+1}=V_{m}\widehat Y_{k+1}$\\
\end{algorithm}
%
%
Moreover, as suggested in \cite{Benner2016}, the parameter
$\eta_k$ is given by $\eta_k=1/(k^3+1)$ or $\eta_k=\min\{0.1,0.9\cdot\|\mathcal{R}(V_mY_kV_m^T)\|_F\}$. These values lead to superlinear convergence and quadratic convergence,  respectively.

To reduce the computational efforts of Algorithm~\ref{PNK_EK_algorithm}, one can solve the projected equation in line~\ref{line_alg_innersolve} only periodically, say every $d\geq 1$ iterations. From our numerical experience, we think that this strategy may pay off if implemented only for large $k$, e.g., $k>3$, when $\eta_k\|\mathcal{R}(V_mY_kV_m^T)\|_F$ is small and a quite large space is in general necessary to reach the accuracy prescribed for the current Lyapunov equation. For small $k$, very few iterations are sufficient for the solution of the related equations and performing line~\ref{line_alg_innersolve}, and thus checking the Lyapunov residual norm, only periodically can lead to the execution of unnecessary iterations with a consequent waste of computational efforts in the solution of the linear systems for the basis generation.
However, in all the reported results in section~\ref{Numerical examples} we solve the projected equation at each iteration, i.e., $d=1$.

If the coefficient matrix $A$ is neither negative definite nor stable, we need an initial guess $X_0$ 
such that $A-X_0BB^T$ is stable. Such an $X_0$ exists thanks to Assumption~\ref{main_assumption} and the first
equation to be solved in the Newton sequence \eqref{Newton_step} is 
\begin{equation}\label{firstNewtoneq_X0}
 (A-X_0BB^T)X_1+X_1(A-X_0BB^T)^T=-X_0BB^TX_0-C^TC.
\end{equation}
Once again,
in order to apply a projection method to equation \eqref{firstNewtoneq_X0}, we need to suppose that the matrix
$A-X_0BB^T$ is negative definite, or at least that its projected version is stable.

Supposing that such an $X_0$ is given and low-rank, i.e., 
$X_0=S_0S_0^T$, the same argument of Theorem~\ref{Theorem1}
 shows that the $(k+1)$-th iterate of the Newton-Kleinman method
can be approximated by a matrix $X_{k+1}=S_{k+1}S_{k+1}^T$ such that 
$\mbox{Range}(S_{k+1})\subseteq\mathbf{EK}_{\widebar m_{k+1}}^\square(A,[C^T,S_0])$.
Similarly, for the Newton-Kleinman method equipped with a line search, with the notation of Corollary~\ref{Cor1}, we can show that 
$X_{k+1}=P_{k+1}P_{k+1}^T$ is such that 
$\mbox{Range}(P_{k+1})\subseteq\mathbf{EK}_{\widebar m_{k+1}}^\square(A,[C^T,S_0])$.
Therefore, we can still use our new projection framework and the only modification to Algorithm~\ref{PNK_EK_algorithm} consists in replacing the starting block $C^T$ with $[C^T,S_0]$ and setting $Y_0=V_1^TX_0V_1$.

If the instability of $A$ is due to $\ell$ eigenvalues, $\ell\ll n$, with positive real part, a low-rank stabilizing $X_0$ such that $\text{rank}(X_0)\leq\ell$ can be computed, e.g., by following the procedure presented in
\cite[Section 2.7]{Baensch2015}.

\subsection{The rational Krylov subspace}\label{The Rational Krylov subspace}

The same results stated in Theorem~\ref{Theorem1} and Corollary~\ref{Cor1} can be shown also when the rational Krylov subspace~\eqref{def.rational} is employed as approximation space. The proofs follow the same line of the proofs of Theorem~\ref{Theorem1} and Corollary~\ref{Cor1}. The only technical difference is the presence of the shifts 
$\mathbf{s}=[s_2,\ldots,s_m]^T$. To show that $\mathbf{K}_{m_{k+1}}^\square(A-V_{\widebar m_k}\Theta_kB^T,[C^T,V_{\widebar m_k}\Theta_k],\mathbf{s}_{k+1})\subseteq \mathbf{K}_{\widebar m_{k+1}}^\square(A,C^T, \mathbf{\widebar s}_{k+1})$ for a sufficiently large $\widebar m_{k+1}$, the $m_{k+1}-1$ shifts
collected in $\mathbf{s}_{k+1}$ must constitute a subset of the shifts in $\mathbf{\widebar s}_{k+1}$. More precisely, 
if
$\mathbf{\widebar s}_{k}\in\mathbb{C}^{\widebar m_k-1}$ is such that $\mathbf{K}_{m_{k}}^\square(A-V_{\widebar m_{k-1}}\Theta_{k-1}B^T,[C^T,V_{\widebar m_{k-1}}\Theta_{k-1}],\mathbf{s}_{k})\subseteq \mathbf{K}_{\widebar m_{k}}^\square(A,C^T, \mathbf{\widebar s}_{k})$,
$\mathbf{\widebar s}_{1}=\mathbf{s}_{1}$, we must have
$\mathbf{\widebar s}_{k+1}=[\mathbf{\widebar s}_{k}^T,\mathbf{s}_{k+1}^T,s_{\widebar m_k+m_{k+1}-1},\ldots,s_{\widebar m_{k+1}}]^T\in\mathbb{C}^{\widebar m_{k+1}-1}$, $s_{\widebar m_k+m_{k+1}-1},\ldots,s_{\widebar m_{k+1}}\in\mathbb{C}$.

Provided the matrices $V_{\widebar m_k}$
and $V_{\widebar m_{k+1}}$ are such that $\text{Range}(V_{\widebar m_k})=\mathbf{K}_{\widebar m_{k}}^\square(A,C^T, \mathbf{\widebar s}_{k})$ and $\text{Range}(V_{\widebar m_{k+1}})=\mathbf{K}_{\widebar m_{k+1}}^\square(A,C^T, \mathbf{\widebar s}_{k+1})$, also for the rational Krylov subspace approach the projection of the $(k+1)$-th Lyapunov equation~\eqref{Newton_step} of the Newton-Kleinman scheme can be written as in \eqref{eq.kprojected}.
However, the computation of the residual norm requires a different formulation than the one stated in Proposition~\ref{Prop_projectedeqs} as the Arnoldi relation  \eqref{Arnoldi_rel} no longer holds. Indeed, 
for the rational Krylov subspace $\mathbf{K}_{\widebar m_{k+1}}^\square(A,C^T, \mathbf{\widebar s}_{k+1})$, we have
\begin{equation}
\begin{array}{rll}
AV_{\widebar m_{k+1}}& = & V_{\widebar m_{k+1}}T_{\widebar m_{k+1}}+\mathcal{V}_{\widebar m_{k+1}+1}E^T_{\widebar m_{k+1}+1}\underline{H}_{\widebar m_{k+1}}(\mbox{diag}(s_2,\ldots,s_{\widebar m_{k+1}+1})\otimes I_q)H_{\widebar m_{k+1}}^{-1} \\
&&\\
&&-(I-V_{\widebar m_{k+1}}V_{\widebar m_{k+1}}^T)A\mathcal{V}_{\widebar m_{k+1}+1}E^T_{\widebar m_{k+1}+1}\underline{H}_{\widebar m_{k+1}}H_{\widebar m_{k+1}}^{-1},
\end{array}
 \end{equation}
where the matrix $\underline{H}_{\widebar m_{k+1}}\in\mathbb{R}^{q(\widebar m_{k+1}+1)\times q\widebar m_{k+1}}$
collects the orthonormalization coefficients stemming from the orthogonalization steps and $H_{\widebar m_{k+1}}\in\mathbb{R}^{q\widebar m_{k+1}\times q\widebar m_{k+1}}$
is its principal square submatrix. See, e.g., \cite{Ruhe1994,Druskin2011}. Nevertheless, the residual norm can be still computed at low cost as it is shown in the next proposition.
\begin{Prop}\label{rational_Prop}
 Consider $V_{\widebar m_{k+1}}\in\RR^{n\times qm_{k+1}}$, $\mbox{Range}(V_{\widebar m_{k+1}})=\mathbf{K}_{\widebar m_{k+1}}^\square(A,C^T,\mathbf{s})$, and let $Y_{\widebar m_{k+1}}$ be the solution of the projected equation~\eqref{eq.kprojected}. Then
\begin{multline}
\|(A-X_{k}BB^T)(V_{\widebar m_{k+1}}Y_{\widebar m_{k+1}}V_{\widebar m_{k+1}}^T)+
(V_{\widebar m_{k+1}}Y_{\widebar m_{k+1}}V_{\widebar m_{k+1}}^T)(A-X_{k}BB^T)^T+X_{k}BB^TX_{k}+C^TC\|_F=\\
=\|F_{\widebar m_{k+1}}JF_{\widebar m_{k+1}}^T\|_F,  
\end{multline}
where $F_{\widebar m_{k+1}}$ is the $2q\times 2q$ upper triangular matrix in the ``skinny'' QR factorization of 
$$U_{\widebar m_{k+1}}=[V_{\widebar m_{k+1}}Y_{\widebar m_{k+1}}H_{\widebar m_{k+1}}^{-T}\underline{H}_{\widebar m_{k+1}}^{T}E_{\widebar m_{k+1}+1},\mathcal{V}_{\widebar m_{k+1}+1}s_{\widebar m_{k+1}+1}-(I-V_{\widebar m_{k+1}}V_{\widebar m_{k+1}}^T)A\mathcal{V}_{\widebar m_{k+1}+1}],$$
and
$$
J=\begin{bmatrix}
                     O_q & I_q \\
                     I_q & O_q \\
                     \end{bmatrix}.
                     $$
\end{Prop}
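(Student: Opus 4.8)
The plan is to express the Lyapunov residual
$\mathcal{R}_{\text{Lyap}} := (A-X_kBB^T)(V Y V^T) + (V Y V^T)(A-X_kBB^T)^T + X_kBB^TX_k + C^TC$
(writing $V=V_{\widebar m_{k+1}}$, $Y=Y_{\widebar m_{k+1}}$ for brevity) in a factored form whose Frobenius norm can be read off a small QR factorization. First I would use the identity $X_k = V\,\mbox{diag}(Y_{\widebar m_k},O)\,V^T$ together with the rational Arnoldi relation displayed just before the proposition to rewrite $(A-X_kBB^T)VYV^T$. The key point is that $(A-X_kBB^T)V = AV - X_kBB^TV$, and since $\mbox{Range}(X_kB)\subseteq\mbox{Range}(V)$, the term $X_kBB^TV$ is already in the span of $V$; hence modulo a $VYV^T$-type contribution it only affects the ``interior'' part of the residual, which is annihilated by the Galerkin/projected equation \eqref{eq.kprojected}. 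So the residual is governed entirely by the rank-deficient ``boundary'' term coming from $AV$, exactly as in the standard K-PIK residual argument of \cite{Simoncini2007,Druskin2011}.

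Next I would substitute the rational Arnoldi relation
$AV = V T + \mathcal{V}_{\widebar m_{k+1}+1}E_{\widebar m_{k+1}+1}^T\underline H(\mbox{diag}(s_2,\dots)\otimes I_q)H^{-1} - (I-VV^T)A\mathcal{V}_{\widebar m_{k+1}+1}E_{\widebar m_{k+1}+1}^T\underline H H^{-1}$
into $\mathcal{R}_{\text{Lyap}}$. The $VTV^T$ contributions combine with $X_k$-terms and $C^TC = VE_1\pmb{\gamma\gamma}^TE_1^TV^T$ to give exactly the left-hand side of the projected equation \eqref{eq.kprojected} sandwiched by $V,V^T$, which vanishes because $Y$ solves it. What survives is a sum of two rank-$q$ pieces and their transposes: schematically $\mathcal{V}_{\widebar m_{k+1}+1}\,g^T\,VY^T? $ plus its transpose, where the relevant vector block turns out to be $V Y H^{-T}\underline H^T E_{\widebar m_{k+1}+1}$ on one side and $\mathcal{V}_{\widebar m_{k+1}+1}s_{\widebar m_{k+1}+1} - (I-VV^T)A\mathcal{V}_{\widebar m_{k+1}+1}$ on the other. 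Collecting these into the single tall matrix $U_{\widebar m_{k+1}}=[\,V Y H^{-T}\underline H^T E_{\widebar m_{k+1}+1},\ \mathcal{V}_{\widebar m_{k+1}+1}s_{\widebar m_{k+1}+1}-(I-VV^T)A\mathcal{V}_{\widebar m_{k+1}+1}\,]$, the entire residual is $U_{\widebar m_{k+1}} J U_{\widebar m_{k+1}}^T$ with the symmetric off-diagonal selector $J$, since the structure ``first block times second block transpose, plus transpose'' is precisely $U J U^T$.

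Finally, taking a skinny QR factorization $U_{\widebar m_{k+1}} = W F_{\widebar m_{k+1}}^T$ with $W$ having orthonormal columns and $F_{\widebar m_{k+1}}\in\RR^{2q\times 2q}$ upper triangular, orthogonal invariance of the Frobenius norm gives
$\|U_{\widebar m_{k+1}} J U_{\widebar m_{k+1}}^T\|_F = \|W F_{\widebar m_{k+1}}^T J F_{\widebar m_{k+1}} W^T\|_F = \|F_{\widebar m_{k+1}} J F_{\widebar m_{k+1}}^T\|_F$
(using $J^T=J$ and $\|WMW^T\|_F=\|M\|_F$), which is the claimed formula. I expect the main obstacle to be the bookkeeping in the second step: carefully tracking which terms land inside $\mbox{Range}(V)$ and are killed by the projected equation versus which become the boundary contributions, and in particular verifying that the $X_kBB^T$-correction and the oblique projector $(I-VV^T)A\mathcal{V}_{\widebar m_{k+1}+1}$ term combine to give exactly the two columns of $U_{\widebar m_{k+1}}$ as written — this requires being precise about the $H^{-1}$, $\underline H$ factors appearing in the non-orthogonal rational Arnoldi recurrence, unlike the cleaner extended-Krylov case of Proposition~\ref{Prop_projectedeqs}.
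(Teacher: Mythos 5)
Your argument is correct and is precisely the one the paper relies on: the paper's proof simply defers to \cite[Proposition 4.2]{Druskin2011}, whose reasoning is exactly your computation (interior terms annihilated by the projected equation, the surviving boundary terms collected as $U_{\widebar m_{k+1}}JU_{\widebar m_{k+1}}^T$, then a skinny QR and orthogonal invariance of the Frobenius norm), here applied verbatim with the closed-loop matrix $A-X_kBB^T$ since $\mbox{Range}(X_kB)\subseteq\mbox{Range}(V_{\widebar m_{k+1}})$. Only a notational slip: with $F_{\widebar m_{k+1}}$ upper triangular the skinny QR should read $U_{\widebar m_{k+1}}=WF_{\widebar m_{k+1}}$ (not $WF_{\widebar m_{k+1}}^T$), which is what yields $\|U_{\widebar m_{k+1}}JU_{\widebar m_{k+1}}^T\|_F=\|F_{\widebar m_{k+1}}JF_{\widebar m_{k+1}}^T\|_F$.
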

\begin{proof}
 The proof is the same of \cite[Proposition 4.2]{Druskin2011}.
 \begin{flushright}
 $\square$
\end{flushright}
\end{proof}

Also the derivation of the efficient computation of the line search coefficients in Proposition~\ref{polynomial_coefficients_computation} exploits the Arnoldi relation~\eqref{Arnoldi_rel} so that new expressions are needed if the rational Krylov subspace is employed. 
\begin{Prop}\label{polynomial_coefficients_computation_rational}
  Let the matrix $\widetilde F_{\widebar m_{k+1}}$ be the $2q\times 2q$ upper triangular matrix in the ``skinny'' QR factorization of 
$$\widetilde U_{\widebar m_{k+1}}=[V_{\widebar m_{k+1}}\widetilde Y_{\widebar m_{k+1}}H_{\widebar m_{k+1}}^{-T}\underline{H}_{\widebar m_{k+1}}^{T}E_{\widebar m_{k+1}+1},\mathcal{V}_{\widebar m_{k+1}+1}s_{\widebar m_{k+1}+1}-(I-V_{\widebar m_{k+1}}V_{\widebar m_{k+1}}^T)A\mathcal{V}_{\widebar m_{k+1}+1}],$$
and the columns of $G_{\widebar m_k}\in\RR^{n\times 2q}$ be an orthogonal basis for the range of $U_{\widebar m_k}$.
If the rational Krylov subspace is employed in the solution of \eqref{eq.Riccati},
 then the coefficients in \eqref{polynomial_coefficients} are such that
 \begin{align*}
   \alpha_k
    =&\left\|T_{\widebar m_k}Y_{\widebar m_k}+Y_{\widebar m_k}T_{\widebar m_k}^T-Y_{\widebar m_k}B_{\widebar m_k}B_{\widebar m_k}^TY_{\widebar m_k}+E_1\pmb{\gamma\gamma}^TE_1^T\right\|_F^2+\left\|F_{\widebar m_{k}}JF_{\widebar m_{k}}^T\right\|_F^2,\\
    \\
    \beta_k=&2\left\|\widetilde F_{\widebar m_{k+1}}J\widetilde F_{\widebar m_{k+1}}^T\right\|_F^2,\\
    \\
 \gamma_k=& \left\{ \begin{array}{l}
\left\langle F_{\widebar m_k}JF_{\widebar m_k}^T,
 \widetilde F_{\widebar m_{k+1}}J\widetilde F_{\widebar m_{k+1}}^T\right\rangle_F, \quad \mbox{if } \widebar m_{k+1}=\widebar m_{k}, \\
0, \quad \mbox{if } \widebar m_{k+1}>\widebar m_{k},\\ 
 \end{array}\right.\\    
 \\
 \delta_k=&
\left\|\left(\widetilde Y_{\widebar m_{k+1}}-\mbox{diag}(Y_{\widebar m_{k}},O_{2q(\widebar m_{k+1}-\widebar m_{k})})\right)B_{\widebar m_{k+1}}B_{\widebar m_{k+1}}^T
\left(\widetilde Y_{\widebar m_{k+1}}
-\mbox{diag}(Y_{\widebar m_{k}},O_{2q(\widebar m_{k+1}-\widebar m_{k})})\right)\right\|^2_F,\\
 \end{align*}
$$
 \epsilon_k =\left\{
\begin{array}{l}
\begin{array}{l}
 \left\langle T_{\widebar m_{k}}Y_{\widebar m_{k}},
 +Y_{\widebar m_{k}}T_{\widebar m_{k}}^T
  -Y_{\widebar m_{k}}B_{\widebar m_{k}}B_{\widebar m_{k}}^TY_{\widebar m_{k}}
  +E_1\pmb{\gamma\gamma}^TE_1^T,\right.\\
 \left.\left(\widetilde Y_{\widebar m_{k+1}}-Y_{\widebar m_{k}}\right)B_{\widebar m_{k+1}}B_{\widebar m_{k+1}}^T\left(\widetilde Y_{\widebar m_{k+1}}- Y_{\widebar m_{k}}\right)\right\rangle_F\\
\end{array} \quad \mbox{if } \widebar m_{k+1}= \widebar m_{k},
 
 \\
\\

\begin{array}{l}
 \left\langle T_{\widebar m_{k}}Y_{\widebar m_{k}}
  +Y_{\widebar m_{k}}T_{\widebar m_{k}}^T
 -Y_{\widebar m_{k}}B_{\widebar m_{k}}B_{\widebar m_{k}}^TY_{\widebar m_{k}}
   +E_1\pmb{\gamma\gamma}^TE_1^T,\right.\\
   
\left[I_{2q\widebar m_k},O_{2q\widebar m_k\times 2q(\widebar m_{k+1}-\widebar m_k)}\right] \left(\widetilde Y_{\widebar m_{k+1}}-\mbox{diag}(Y_{\widebar m_{k}},O_{2q(\widebar m_{k+1}-\widebar m_{k})})\right)B_{\widebar m_{k+1}}B_{\widebar m_{k+1}}^T\left(\widetilde Y_{\widebar m_{k+1}} \right. \\
\left. \left. -\mbox{diag}(Y_{\widebar m_{k}},O_{2q(\widebar m_{k+1}-\widebar m_{k})})\right)\left[I_{2q\widebar m_k};O_{2q(\widebar m_{k+1}-\widebar m_k)\times 2q\widebar m_k}\right]\right\rangle_F\\
 
 +\left\langle  F_{\widebar m_k}JF_{\widebar m_k}^T,G_{\widebar m_k}^TV_{\widebar m_{k+1}}\left(\widetilde Y_{\widebar m_{k+1}}-\mbox{diag}(Y_{\widebar m_{k}},O_{2q(\widebar m_{k+1}-\widebar m_{k})})\right)B_{\widebar m_{k+1}}B_{\widebar m_{k+1}}^T\left(\widetilde Y_{\widebar m_{k+1}} \right.\right. \\
 \left.\left. - \mbox{diag}(Y_{\widebar m_{k}},O_{2q(\widebar m_{k+1}-\widebar m_{k})})\right)V_{\widebar m_{k+1}}^TG_{\widebar m_{k}}\right\rangle_F,\\
\end{array} \, \mbox{if } \widebar m_{k+1}> \widebar m_{k},
\end{array}\right.
$$
and
$$
 \zeta_k=0.
$$

\end{Prop}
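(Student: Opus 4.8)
The plan is to reproduce the argument used for Proposition~\ref{polynomial_coefficients_computation} in the Appendix, replacing the polynomial Arnoldi relation with the rational one of section~\ref{The Rational Krylov subspace} and, accordingly, the Lyapunov residual expression $\sqrt2\,\|Y_{\widebar m}\underline T_{\widebar m}^TE_{\widebar m+1}\|_F$ with the rational residual expression $\|F_{\widebar m}JF_{\widebar m}^T\|_F$ of Proposition~\ref{rational_Prop}. The backbone is the decomposition $\mathcal R(X_k)=\mathcal R_{\mathrm{in}}+\mathcal R_\perp$ of the Riccati residual at $X_k=V_{\widebar m_k}Y_{\widebar m_k}V_{\widebar m_k}^T$ into its Galerkin part and its orthogonal complement. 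Using $C^T=V_1\pmb\gamma$, $B_{\widebar m_k}=V_{\widebar m_k}^TB$, $T_{\widebar m_k}=V_{\widebar m_k}^TAV_{\widebar m_k}$, and the fact that $X_kBB^TX_k$ has column and row spaces in $\mbox{Range}(V_{\widebar m_k})$, one gets
\[
\mathcal R_{\mathrm{in}}=V_{\widebar m_k}\bigl(T_{\widebar m_k}Y_{\widebar m_k}+Y_{\widebar m_k}T_{\widebar m_k}^T-Y_{\widebar m_k}B_{\widebar m_k}B_{\widebar m_k}^TY_{\widebar m_k}+E_1\pmb{\gamma\gamma}^TE_1^T\bigr)V_{\widebar m_k}^T ,
\]
while $\mathcal R_\perp$ depends only on the action of $A$ on $V_{\widebar m_k}Y_{\widebar m_k}V_{\widebar m_k}^T$ and therefore, by the very computation underlying Proposition~\ref{rational_Prop}, equals $U_{\widebar m_k}JU_{\widebar m_k}^T=G_{\widebar m_k}F_{\widebar m_k}JF_{\widebar m_k}^TG_{\widebar m_k}^T$, where $F_{\widebar m_k}$ is the triangular factor of the skinny QR of $U_{\widebar m_k}$. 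From this splitting the formulas for the coefficients~\eqref{polynomial_coefficients} will be extracted term by term.

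The four ``easy'' coefficients drop out directly. Since $\mathcal R_{\mathrm{in}}$ and $\mathcal R_\perp$ are mutually orthogonal in the Frobenius inner product, $\alpha_k=\|\mathcal R(X_k)\|_F^2=\|\mathcal R_{\mathrm{in}}\|_F^2+\|\mathcal R_\perp\|_F^2$, and $\|\mathcal R_\perp\|_F=\|F_{\widebar m_k}JF_{\widebar m_k}^T\|_F$ yields the stated expression; likewise $\beta_k=\|L_{k+1}\|_F^2$ is read off by applying Proposition~\ref{rational_Prop} to the projected equation~\eqref{eq.kprojected}, whose \emph{exact} solution is $\widetilde Y_{\widebar m_{k+1}}$, so that the Galerkin part of $L_{k+1}$ vanishes. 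For $\delta_k$ one writes $Z_k=\widetilde X_{k+1}-X_k=V_{\widebar m_{k+1}}\bigl(\widetilde Y_{\widebar m_{k+1}}-\mbox{diag}(Y_{\widebar m_k},O_{2q(\widebar m_{k+1}-\widebar m_k)})\bigr)V_{\widebar m_{k+1}}^T$, hence $Z_kBB^TZ_k=V_{\widebar m_{k+1}}\widehat Z_kV_{\widebar m_{k+1}}^T$ with $\widehat Z_k=\bigl(\widetilde Y_{\widebar m_{k+1}}-\mbox{diag}(Y_{\widebar m_k},O)\bigr)B_{\widebar m_{k+1}}B_{\widebar m_{k+1}}^T\bigl(\widetilde Y_{\widebar m_{k+1}}-\mbox{diag}(Y_{\widebar m_k},O)\bigr)$, and orthonormality of $V_{\widebar m_{k+1}}$ gives $\delta_k=\|\widehat Z_k\|_F^2$. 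Finally, writing $L_{k+1}=\widetilde U_1\widetilde U_2^T+\widetilde U_2\widetilde U_1^T$ with $V_{\widebar m_{k+1}}^T\widetilde U_2=0$ (the second block of $\widetilde U_{\widebar m_{k+1}}$ carries the projector $I-V_{\widebar m_{k+1}}V_{\widebar m_{k+1}}^T$), one sees that $\langle L_{k+1},M\rangle_F=0$ for every symmetric $M$ whose column space lies in $\mbox{Range}(V_{\widebar m_{k+1}})$; taking $M=Z_kBB^TZ_k$ gives $\zeta_k=0$.

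The coefficients $\gamma_k$ and $\epsilon_k$ carry the case distinction and are where the rational setting genuinely differs. The key remark is that, because a rational Krylov step satisfies $(A-s_{\widebar m_k+1}I)\mathcal V_{\widebar m_k+1}\in\mbox{Range}(V_{\widebar m_k})$, one has $A\mathcal V_{\widebar m_k+1}\in\mbox{Range}(V_{\widebar m_k+1})$; therefore, when $\widebar m_{k+1}>\widebar m_k$ the whole block $U_{\widebar m_k}$ --- both its $V_{\widebar m_k}Y_{\widebar m_k}H_{\widebar m_k}^{-T}\underline H_{\widebar m_k}^TE_{\widebar m_k+1}$ part and its $\mathcal V_{\widebar m_k+1}s_{\widebar m_k+1}-(I-V_{\widebar m_k}V_{\widebar m_k}^T)A\mathcal V_{\widebar m_k+1}$ part --- lies in $\mbox{Range}(V_{\widebar m_{k+1}})$, and so does all of $\mathcal R(X_k)$. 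Pairing $\mathcal R(X_k)$ against $L_{k+1}$ and using the $F$-orthogonality above gives $\gamma_k=0$; pairing it against $Z_kBB^TZ_k=V_{\widebar m_{k+1}}\widehat Z_kV_{\widebar m_{k+1}}^T$ and using $\mathcal R(X_k)=\mathcal R_{\mathrm{in}}+U_{\widebar m_k}JU_{\widebar m_k}^T$ produces two terms: the first, after $V_{\widebar m_k}^TV_{\widebar m_{k+1}}=[I_{2q\widebar m_k},O]$, is the inner product of $T_{\widebar m_k}Y_{\widebar m_k}+Y_{\widebar m_k}T_{\widebar m_k}^T-Y_{\widebar m_k}B_{\widebar m_k}B_{\widebar m_k}^TY_{\widebar m_k}+E_1\pmb{\gamma\gamma}^TE_1^T$ with the leading $2q\widebar m_k\times2q\widebar m_k$ block of $\widehat Z_k$, and the second, after substituting $U_{\widebar m_k}=G_{\widebar m_k}F_{\widebar m_k}$ and using $V_{\widebar m_{k+1}}^TV_{\widebar m_{k+1}}=I$, is $\langle F_{\widebar m_k}JF_{\widebar m_k}^T,\,G_{\widebar m_k}^TV_{\widebar m_{k+1}}\widehat Z_kV_{\widebar m_{k+1}}^TG_{\widebar m_k}\rangle_F$ --- exactly the two summands in the stated formula for $\epsilon_k$. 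When instead $\widebar m_{k+1}=\widebar m_k$, the block $\mathcal V_{\widebar m_k+1}$ is orthogonal to $\mbox{Range}(V_{\widebar m_k})$, so $U_{\widebar m_k}JU_{\widebar m_k}^T$ has zero $F$-inner product with $Z_kBB^TZ_k\in\mbox{Range}(V_{\widebar m_k})$ (first branch of $\epsilon_k$), whereas $U_{\widebar m_k}JU_{\widebar m_k}^T$ and $L_{k+1}=\widetilde U_{\widebar m_{k+1}}J\widetilde U_{\widebar m_{k+1}}^T$ share the same ``new'' direction and pair to $\langle F_{\widebar m_k}JF_{\widebar m_k}^T,\widetilde F_{\widebar m_{k+1}}J\widetilde F_{\widebar m_{k+1}}^T\rangle_F$ (first branch of $\gamma_k$).

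I expect the main obstacle to be exactly this bookkeeping in the treatment of $\gamma_k$ and $\epsilon_k$: unlike the polynomial Arnoldi relation, the rational one does not leave the basis invariant, so one must identify precisely which portion of the $k$-th orthogonal residual $U_{\widebar m_k}JU_{\widebar m_k}^T$ survives the projection onto $\mbox{Range}(V_{\widebar m_{k+1}})$, which is what forces the coordinate-change matrix $G_{\widebar m_k}^TV_{\widebar m_{k+1}}$ into the statement. Once the inclusion $A\mathcal V_{\widebar m_k+1}\in\mbox{Range}(V_{\widebar m_k+1})$ and the $F$-orthogonality of $L_{k+1}$ to $\mbox{Range}(V_{\widebar m_{k+1}})$ are available, the remaining manipulations are term-by-term identical to those in the Appendix proof of Proposition~\ref{polynomial_coefficients_computation}.
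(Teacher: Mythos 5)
Your overall strategy---splitting $\mathcal R(X_k)$ into its Galerkin part $\mathcal R_{\mathrm{in}}$ plus $U_{\widebar m_k}JU_{\widebar m_k}^T$, invoking the rational Arnoldi relation and Proposition~\ref{rational_Prop} in place of \eqref{Arnoldi_rel} and \eqref{res.projected}---is exactly the adaptation the paper intends, and your treatments of $\alpha_k$, $\delta_k$, $\zeta_k$ and of both branches of $\epsilon_k$ are sound. The gap is in $\gamma_k$ for the main case $\widebar m_{k+1}>\widebar m_k$, where your argument rests on the claim that $(A-s_{\widebar m_k+1}I)\mathcal V_{\widebar m_k+1}\in\mbox{Range}(V_{\widebar m_k})$, hence $A\mathcal V_{\widebar m_k+1}\in\mbox{Range}(V_{\widebar m_k+1})$ and the block $W_k:=\mathcal V_{\widebar m_k+1}s_{\widebar m_k+1}-(I-V_{\widebar m_k}V_{\widebar m_k}^T)A\mathcal V_{\widebar m_k+1}$ lies in $\mbox{Range}(V_{\widebar m_{k+1}})$. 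This inclusion is false: after orthogonalization one has $\mathcal V_{\widebar m_k+1}h_{\widebar m_k+1,\widebar m_k}=(A-s_{\widebar m_k+1}I)^{-1}w-V_{\widebar m_k}c$ with $w\in\mbox{Range}(V_{\widebar m_k})$ and $c\neq 0$ in general, so $(A-s_{\widebar m_k+1}I)\mathcal V_{\widebar m_k+1}$ carries a term $-AV_{\widebar m_k}c$ that escapes $\mbox{Range}(V_{\widebar m_{k+1}})$. Equivalently, the rational Arnoldi relation at level $\widebar m_{k+1}$ gives $(I-V_{\widebar m_{k+1}}V_{\widebar m_{k+1}}^T)A\mathcal V_{\widebar m_k+1}=W_{k+1}E_{\widebar m_{k+1}+1}^T\underline{H}_{\widebar m_{k+1}}H_{\widebar m_{k+1}}^{-1}E_{\widebar m_k+1}$, which is nonzero in general since the last block row of $H_{\widebar m_{k+1}}^{-1}$ is generically dense. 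Hence $\mathcal R(X_k)$ is \emph{not} supported in $\mbox{Range}(V_{\widebar m_{k+1}})$ and the $F$-orthogonality of $L_{k+1}$ to that range does not finish the job: expanding the four cross terms, everything cancels except $\gamma_k=2\,\mbox{trace}\bigl(U_1^T\widetilde U_1\,W_{k+1}^TW_k\bigr)$ with $U_1=V_{\widebar m_k}Y_{\widebar m_k}H_{\widebar m_k}^{-T}\underline{H}_{\widebar m_k}^TE_{\widebar m_k+1}$, $\widetilde U_1=V_{\widebar m_{k+1}}\widetilde Y_{\widebar m_{k+1}}H_{\widebar m_{k+1}}^{-T}\underline{H}_{\widebar m_{k+1}}^TE_{\widebar m_{k+1}+1}$ and $W_{k+1}^TW_k=-W_{k+1}^TA\mathcal V_{\widebar m_k+1}$. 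The vanishing of this quantity is precisely what must be proved, and neither your inclusion nor the orthogonality $W_{k+1}\perp\mbox{Range}(V_{\widebar m_{k+1}})$ yields it; note that the paper's own one-line proof is silent on exactly this term, so the whole burden of the rational adaptation sits here.

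Two further points you should not pass over silently. First, Proposition~\ref{rational_Prop} gives $\|L_{k+1}\|_F=\|\widetilde F_{\widebar m_{k+1}}J\widetilde F_{\widebar m_{k+1}}^T\|_F$, so your derivation yields $\beta_k=\|\widetilde F_{\widebar m_{k+1}}J\widetilde F_{\widebar m_{k+1}}^T\|_F^2$, which is half the value in the statement; you need to either account for the factor $2$ or flag the discrepancy. Second, in the branch $\widebar m_{k+1}=\widebar m_k$ the two residual factors do not ``share the same new direction'' in the sense required: $\mbox{Range}(U_{\widebar m_k})\neq\mbox{Range}(\widetilde U_{\widebar m_{k+1}})$ (they share $W_k$ but not the first block), so $\langle U_{\widebar m_k}JU_{\widebar m_k}^T,\widetilde U_{\widebar m_{k+1}}J\widetilde U_{\widebar m_{k+1}}^T\rangle_F$ reduces to $\langle F_{\widebar m_k}JF_{\widebar m_k}^T,\widetilde F_{\widebar m_{k+1}}J\widetilde F_{\widebar m_{k+1}}^T\rangle_F$ only after inserting the change-of-basis factor $G_{\widebar m_k}^T\widetilde G_{\widebar m_{k+1}}$, which is not the identity; a direct expansion instead gives $2\,\mbox{trace}(U_1^T\widetilde U_1W_k^TW_k)$, and the passage to the stated formula needs justification.
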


\begin{proof}
 The proof follows the same line of the proof of Proposition~\ref{polynomial_coefficients_computation}.
 In the latter we deeply exploit the orthogonality of 
 $\mathcal{V}_{\widebar m_{k+1}+1}$ with respect to 
 $V_{\widebar m_{k+1}}$. Here we do the same noticing the space spanned by 
 $\mathcal{V}_{\widebar m_{k+1}+1}E^T_{\widebar m_{k+1}+1}\underline{H}_{\widebar m_{k+1}}(\mbox{diag}(s_2,\ldots,s_{\widebar m_{k+1}+1})\otimes I_q)H_{\widebar m_{k+1}}^{-1} 
-(I-V_{\widebar m_{k+1}}V_{\widebar m_{k+1}}^T)A\mathcal{V}_{\widebar m_{k+1}+1}E^T_{\widebar m_{k+1}+1}\underline{H}_{\widebar m_{k+1}}H_{\widebar m_{k+1}}^{-1}$ is orthogonal to $\mbox{Range}(V_{\widebar m_{k+1}}).$ \hfill $\square$


\end{proof}

Even though the coefficients are computed in a different manner, Proposition~\ref{existence_lambda} still holds as $\gamma_k=\zeta_k=0$ for $\widebar m_{k+1}>\widebar m_k$. Therefore, the exact line search \eqref{stepsizeselection} can be carried out also when the rational Krylov subspace is employed.

The projected Newton-Kleinman method with rational Krylov as approximation subspace differs from Algorithm~\ref{PNK_EK_algorithm} only in the basis construction,
the update of the matrix $T_m$ and the computation of the residual norms and the line search coefficients. See Algorithm~\ref{PNK_RK_algorithm}.

\setcounter{AlgoLine}{0}
\begin{algorithm}
\caption{Projected Newton-Kleinman method with rational Krylov (PNK\_RK).\label{PNK_RK_algorithm}}
\SetKwInOut{Input}{input}\SetKwInOut{Output}{output}
\Input{$A\in\mathbb{R}^{n\times n},$ $A<0$, $B\in\mathbb{R}^{n\times p}$, $C\in\mathbb{R}^{q\times n}$, $m_{\max}$, $\epsilon>0$, $\widebar\eta\in(0,1)$, $\alpha\in(0,1-\widebar\eta),$ $\{s_2,\ldots,s_{m_{\max}}\}.$}
\Output{$P_{k+1}\in\mathbb{R}^{n\times t}$, $t\ll n$, $P_{k+1}P_{k+1}^T=X_{k+1}\approx X$ approximate solution to \eqref{eq.Riccati}.}
\BlankLine
   \nl Set $Y_0=O_{q}$, $\widebar m=1$ and $k=0$\\
  \nl  Perform economy-size QR of $C^T$, $C^T=V_1 {\pmb \gamma}$. Set $\mathcal{V}_1\equiv V_1$ \\ 
  \nl Select $\eta_0\in(0,\widebar \eta)$\\
  \For{$m=1, 2,\dots,$ till $m_{\max}$}{
  \nl Compute next basis block $\mathcal{V}_{m+1}$ as in \cite{Druskin2011} and set $V_{m+1}=[V_{{m}},\mathcal{V}_{m+1}]$ \\
  \nl Collect the orthonormalization coefficients in 
  $\underline{H}_m\in\RR^{q(m+1)\times qm}$ \\
  \nl  Update $T_{m}=V_{m}^TAV_{m}$ as in \cite{Druskin2011} and $B_{m}=V_{m}^TB$  \\
  \nl Set $Q_m:=T_{m}-\mbox{diag}(Y_k,O_{q(m-\widebar m)})B_mB_m^T$ \\
  \nl Solve $$Q_m\widetilde Y+
  \widetilde YQ_m^T=-\mbox{diag}(Y_k,O_{q(m-\widebar m)})B_mB_m^T\mbox{diag}(Y_k,O_{q(m-\widebar m)})-E_1\pmb{\gamma\gamma}^TE_1^T$$ 
  \nl  Perform economy-size QR, $\widetilde U=\widetilde G\widetilde F$ of
  $$\widetilde U=[V_m\widetilde YH_m^{-T}\underline{H}_{m}^{T}E_{m+1},\mathcal{V}_{m+1}s_{m+1}-(I-V_{m}V_{m}^T)A\mathcal{V}_{m+1}]$$
  
  \If{$\|\widetilde FJ\widetilde F^T\|_F\leq\eta_k\|\mathcal{R}(V_{\widebar m}Y_kV_{\widebar m}^T)\|_F$}{
  \nl Compute the coefficients \eqref{polynomial_coefficients} as in Proposition~\ref{polynomial_coefficients_computation_rational}\\
  \nl Compute $\lambda_k$ as in \eqref{stepsizeselection}\\
  \nl Set $Y_{k+1}=(1-\lambda_k)\cdot\mbox{diag}(Y_k,O_{q(m-\widebar m)})+\lambda_k\widetilde Y$\\ 
  \If{$\|\mathcal{R}(V_{m}Y_{k+1}V_{m}^T)\|_F<\epsilon\cdot\|C^TC\|_F$}{ 
\nl \textbf{Break} and go to \textbf{16} }
  \nl Set $k=k+1$ and $\widebar m=m$ \\
  \nl Select $\eta_k\in(0,\widebar\eta)$
  }
  }
  \nl Factorize $Y_{k+1}$ and retain $\widehat Y_{k+1}\in\mathbb{R}^{mq\times t}$, $t\leq mq$\\
  \nl Set $P_{k+1}=V_{m}\widehat Y_{k+1}$\\
\end{algorithm}

In practice the shifts defining the rational Krylov subspace can be computed on the fly following the approach presented in \cite[Section 2]{Druskin2011}. This strategy
requires two values $s_0^{(1)}$ and $s_0^{(2)}$, and their complex conjugates, which define an approximation, possibly not very accurate, of 
the spectral region of $-(A-X_0BB^T)$.
At the $m$-th iteration, if $\lambda_1,\ldots,\lambda_{qm}$ denote the eigenvalues of $T_{m}-\mbox{diag}(Y_k,O_{q(m-\widebar m)})B_mB_m^T$,
the $(m+1)$-th shift is computed as
$$s_{m+1}=\argmax_{s\in\mathfrak{S}_m}\frac{1}{|r_m(s)|},$$
where $r_m(s)=\prod_{j=1}^{qm}\prod_{i=2}^m\frac{s-\lambda_j}{s-s_i}$ and $\mathfrak{S}_m$ denotes the convex hull of $\{-\lambda_1,\ldots,-\lambda_{qm}, s_0^{(1)},\widebar s_0^{(1)}, s_0^{(2)}, \widebar s_0^{(2)}\}$. See \cite[Section 2]{Druskin2011} for further details.

The employment of the eigenvalues of $T_{m}-\mbox{diag}(Y_k,O_{q(m-\widebar m)})B_mB_m^T$ in the computation of $s_{m+1}$ is natural in our framework as, at the $m$-th iteration, we are actually trying to solve a Lyapunov equation of the  same form of \eqref{Newton_step}.
We think that this is somehow related to the analysis Simoncini presented in \cite{Simoncini2016a} where a ``pure'' rational Krylov subspace method for the solution of \eqref{eq.Riccati} is studied. In \cite{Simoncini2016a}, if $V_m$ denotes the orthonormal basis of $\mathbf{K}_m^\square(A,C^T,\mathbf{s})$, the eigenvalues of $V_m^TAV_m-Y(V_m^TB)(B^TV_m)$ are employed for computing the $(m+1)$-th shift where $Y$ denotes the solution of the projection of \eqref{eq.Riccati} onto the current subspace, namely $Y$ is such that
$$(V_m^TAV_m)Y+Y(V_m^TAV_m)^T-Y(V_m^TB)(B^TV_m)Y+(V_m^TC^T)(CV_m)=0.
$$
It may be interesting to study the connection between the two approaches and this may help to better understand the convergence properties of projection methods for algebraic Riccati equations. Indeed, to the best of our knowledge, no a-priori result about the stabilizing property of the numerical solution computed by pure projection techniques is available in the literature. See, e.g., \cite[Section 1]{Simoncini2014}. The only result about this topic we are aware of is \cite[Corollary 4.3]{Simoncini2016a}. By exploiting certain perturbation theory results, Simoncini shows that the approximate solution computed by projection methods is stabilizing if it essentially achieves a sufficiently small error\footnote{The result in \cite[Corollary 4.3]{Simoncini2016a} is very general and it is not restricted to projection methods. It can be actually applied to any solution procedure.}. However, neither the error matrix nor the norm of the inverse of the closed-loop Lyapunov operator needed in the estimate presented in \cite[Corollary 4.3]{Simoncini2016a} are computable. See \cite{Simoncini2016a} for further details. Nevertheless, projection methods often produce a solution $X$ such that $A-XBB^T$ is stable in practice. See, e.g., \cite[Section 4]{Jbilou2003}. 


\section{Enhanced convergence results}\label{Convergence results}
The convergence of the inexact Newton-Kleinman method has been proved under some suitable assumptions in, e.g., \cite{Benner2016}. The main drawback of \cite[Theorem 10]{Benner2016} is that the authors have to assume the matrix $A-X_{k+1}BB^T$ to be stable for $k\geq k_0$. We show that, in our framework, $A-X_{k+1}BB^T$ is stable 
by construction and the sequence
  $\{X_k\}_{k\geq 0}$ computed by the projected Newton-Kleinman method is well-defined if $A<0$, $\|L_{k+1}\|_F$ is sufficiently small, and 
$BB^T$ and $C^TC$ fulfill certain conditions.
 In the following theorem, given a matrix $D\in\mathbb{R}^{n\times n}$, $\text{Ker}(D)$ denotes the kernel of $D$, namely $\text{Ker}(D)=\{y\in\mathbb{R}^n\text{ s.t. }Dy=0\}$.
%
%
%
%

 \begin{theorem}\label{Theorem_conv2}
 Assume $A<0$ and suppose that  either
 \begin{itemize}
  \item[(i)] $\text{Ker}(C^TC)\subseteq \text{Ker}(BB^T)$ and  $\|L_{k+1}\|_F\leq \sigma_q^2(C),  
  $
 \end{itemize}
or
 \begin{itemize}
  \item[(ii)] $BB^T\leq C^TC$, i.e., $BB^T-C^TC$ is negative semidefinite,
  and  
  $
 \|L_{k+1}\|_F\leq \sigma_p^2(B),  
  $
 \end{itemize}
where $\sigma_q(C)$, $\sigma_p(B)$ denote the smallest 
singular value of $C$ and $B$ respectively. Then, for all $\lambda_k\in(0,2]$, $A-X_{k+1}BB^T$ is stable and the sequence
  $\{X_k\}_{k\geq 0}$ computed by the projected Newton-Kleinman method is well-defined.

 \end{theorem}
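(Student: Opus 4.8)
The plan is to argue by induction on $k$, establishing at each step that $A_k:=A-X_kBB^T$ is stable and that $X_k$ is symmetric positive semidefinite; well-posedness of the whole iteration then comes for free, since as soon as every $A_k$ is stable the compressed coefficient matrix of the $(k+1)$-th projected Lyapunov equation has no pair of opposite eigenvalues and the projected problem is uniquely solvable. The base case $k=0$ is immediate: $X_0=O\succeq0$ and $A_0=A<0$. Throughout I write $Z_k=\widetilde X_{k+1}-X_k$, $A_{k+1}=A_k-\lambda_kZ_kBB^T$ and $X_{k+1}=X_k+\lambda_kZ_k$.

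The first step is to record a Lyapunov identity for $A_{k+1}$. Starting from $L_{k+1}=\mathcal{R}'[X_k](Z_k)+\mathcal{R}(X_k)=A_kZ_k+Z_kA_k^T+\mathcal{R}(X_k)$ together with the quartic expansion $\mathcal{R}(X_k+\lambda_kZ_k)=(1-\lambda_k)\mathcal{R}(X_k)+\lambda_kL_{k+1}-\lambda_k^2Z_kBB^TZ_k$ from section~\ref{Implementation details}, a direct computation gives
$$A_{k+1}X_{k+1}+X_{k+1}A_{k+1}^T=\mathcal{R}(X_{k+1})-X_{k+1}BB^TX_{k+1}-C^TC=:-W_{k+1},$$
so that
$$W_{k+1}=C^TC+X_{k+1}BB^TX_{k+1}+\lambda_k^2Z_kBB^TZ_k-(1-\lambda_k)\mathcal{R}(X_k)-\lambda_kL_{k+1}.$$
Hence $X_{k+1}$ solves a Lyapunov equation with coefficient $A_{k+1}$ and right-hand side $-W_{k+1}$, and the whole question is reduced to the sign of $W_{k+1}$ and of $X_{k+1}$. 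For the latter I would use that, in the projected scheme, $\widetilde X_{k+1}=V_{\widebar m_{k+1}}\widetilde Y_{\widebar m_{k+1}}V_{\widebar m_{k+1}}^T$ with $\widetilde Y_{\widebar m_{k+1}}$ solving a projected Lyapunov equation whose right-hand side is negative semidefinite and whose coefficient is the compression of $A_k$, giving $\widetilde X_{k+1}\succeq0$; then $X_{k+1}=(1-\lambda_k)X_k+\lambda_k\widetilde X_{k+1}\succeq0$ for $\lambda_k\in(0,1]$, and for $\lambda_k\in(1,2]$ I would write $X_{k+1}=\widetilde X_{k+1}+(\lambda_k-1)Z_k$ and use the smallness of $\|L_{k+1}\|_F$ to keep it positive semidefinite.

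Next I would show that detectability transfers to the closed loop. Each of the hypotheses forces $\mbox{Range}(B)\subseteq\mbox{Range}(C^T)$ — immediately under (ii), and under (i) because $\mbox{Ker}(C^TC)\subseteq\mbox{Ker}(BB^T)$ is equivalent to it. Hence $BB^TX_{k+1}=C^TG_{k+1}$ for some $G_{k+1}$, so $A_{k+1}^T=A^T-C^TG_{k+1}$ is a state-feedback perturbation of $(A^T,C^T)$; since $(C,A)$ is detectable by Assumption~\ref{main_assumption}, $(A^T,C^T)$ is stabilizable, stabilizability is feedback-invariant, and therefore $A_{k+1}$ has no left eigenvector $w$ with $Cw=0$ and eigenvalue in the closed right half-plane. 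Stability of $A_{k+1}$ then follows by a Lyapunov-inertia argument: if $w$ were a left eigenvector of $A_{k+1}$ with $\mathrm{Re}\,\mu\geq0$, pairing the identity above with $w$ yields $2(\mathrm{Re}\,\mu)\,w^*X_{k+1}w=-w^*W_{k+1}w$; with $X_{k+1}\succeq0$ and $W_{k+1}\succeq0$ this forces $w^*W_{k+1}w=0$, and if $\mbox{Ker}(W_{k+1})\subseteq\mbox{Ker}(C)$ then $Cw=0$, a contradiction, closing the induction.

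It remains — and this is the crux — to prove $W_{k+1}\succeq0$ with $\mbox{Ker}(W_{k+1})\subseteq\mbox{Ker}(C)$, and here the quantitative hypotheses enter. Writing $P$ for the orthogonal projector onto $\mbox{Range}(C^T)$, on that subspace $C^TC\succeq\sigma_q^2(C)P$ (resp.\ one exploits $BB^T\preceq C^TC$), while $-\lambda_kL_{k+1}\succeq-2\|L_{k+1}\|_F I$ and the term $-(1-\lambda_k)\mathcal{R}(X_k)$ is controlled by the information carried along the induction (an upper bound on $\mathcal{R}(X_k)$ when $\lambda_k\leq1$, a lower bound when $\lambda_k\in(1,2]$). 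Under (i) the threshold $\|L_{k+1}\|_F\leq\sigma_q^2(C)$ keeps $C^TC-\lambda_kL_{k+1}$ strictly positive on $\mbox{Range}(C^T)$, and on $\mbox{Ker}(C^TC)$ the perturbation $BB^T$ vanishes so that no positivity is lost there; under (ii) the inequality $BB^T\preceq C^TC$ together with $\|L_{k+1}\|_F\leq\sigma_p^2(B)$ plays the analogous role. Either way one gets $W_{k+1}\succeq0$ with $\mbox{Range}(C^T)\subseteq\mbox{Range}(W_{k+1})$. I expect this last estimate, together with the $\lambda_k\in(1,2]$ bookkeeping in the positivity of $X_{k+1}$, to be the main difficulty: one must absorb the \emph{indefinite} residual $L_{k+1}$ and a possibly large line-search step while retaining enough positivity of $W_{k+1}$ both on $\mbox{Range}(C^T)$ and on its orthogonal complement — which is exactly what the two alternative hypotheses are engineered to guarantee.
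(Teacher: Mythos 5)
Your overall strategy (induction on $k$, a Lyapunov identity for $A-X_{k+1}BB^T$, and a left-eigenvector argument) points in the right direction, but the specific identity you choose creates a gap that the hypotheses cannot close. You take $X_{k+1}$ itself as the Gramian, which forces the right-hand side $-W_{k+1}$ to contain the term $-(1-\lambda_k)\mathcal{R}(X_k)$. The Riccati residual $\mathcal{R}(X_k)$ is indefinite and its norm is not controlled by $\sigma_q^2(C)$ or $\sigma_p^2(B)$ — the hypotheses (i)/(ii) bound only $\|L_{k+1}\|_F$ — so the required inequality $W_{k+1}\succeq 0$ has no visible proof; your sketch at this point ("controlled by the information carried along the induction") is exactly where the argument breaks. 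A second problem with the same choice of Gramian: for $\lambda_k\in(1,2]$ you need $X_{k+1}=(1-\lambda_k)X_k+\lambda_k\widetilde X_{k+1}\succeq 0$, which is a difference of positive semidefinite matrices and need not be positive semidefinite; the "smallness of $\|L_{k+1}\|_F$" does not rescue this. The paper's proof sidesteps both issues by writing the Lyapunov identity for $A-X_{k+1}BB^T$ with Gramian $\widetilde X_{k+1}=X_k+Z_k$, which \emph{is} positive semidefinite by the projection construction (stable projected coefficient, negative semidefinite projected right-hand side); the resulting right-hand side is $-C^TC-X_{k+1}BB^TX_{k+1}+L_{k+1}-\tfrac{2-\lambda_k}{\lambda_k}(X_{k+1}-X_k)BB^T(X_{k+1}-X_k)$, with no $\mathcal{R}(X_k)$ term. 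Then the only possibly positive scalar $z^*L_{k+1}z$ is absorbed by $z^*C^TCz$ (or $z^*BB^Tz$) using the singular-value bound, all terms vanish, in particular $z^*(X_{k+1}-X_k)B=0$, and $z$ becomes an unstable left eigenvector of $A-X_kBB^T$, contradicting the inductive hypothesis. No detectability transfer and no kernel condition on $W_{k+1}$ are needed.

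There is a second, independent gap: you assert that well-posedness of the whole iteration "comes for free" once every $A_k$ is stable. It does not, because stability of $A_k$ does not imply stability of its compression $V_{\widebar m_k+\ell}^T A_k V_{\widebar m_k+\ell}$ onto an expanding Krylov subspace, and it is precisely these projected matrices that must be stable for each projected Lyapunov equation in Algorithms~\ref{PNK_EK_algorithm}--\ref{PNK_RK_algorithm} to be uniquely solvable (and for $\widetilde Y_{\widebar m_{k+1}}\succeq 0$, which your own argument also uses). The paper therefore repeats the entire eigenvector argument at the projected level, using $V_{\widebar m_{k+1}+\ell}^TL_{k+1}V_{\widebar m_{k+1}+\ell}$ and the projected versions of conditions (i)/(ii); this half of the theorem is missing from your proposal.
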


 \begin{proof}
 We first assume that $A-X_kBB^T$ and $V_{\widebar m_{k}+ \ell}^T(A-X_kBB^T)V_{\widebar m_{k}+ \ell}$ are stable for any $\ell\geq 0$ and we show that this, along with the hypotheses above, implies the stability of both $A-X_{k+1}BB^T$ and $V_{\widebar m_{k+1}+ \ell}^T(A-X_{k+1}BB^T)V_{\widebar m_{k+1}+ \ell}$ for any $\ell\geq0$.
 
 The matrix $\widetilde X_k=X_k+Z_k=V_{\widebar m_{k+1}}\widetilde Y_{\widebar m_{k+1}}V_{\widebar m_{k+1}}^T$
 in \eqref{Newton_step_inexact} is symmetric positive semidefinite. Indeed, $\widetilde Y_{\widebar m_{k+1}}$ is symmetric positive semidefinite as it is the solution of the projected equation \eqref{eq.kprojected} where the coefficient matrix $Q_{\widebar m_{k+1}}=V_{\widebar m_{k+1}}^T(A-X_kBB^T)V_{\widebar m_{k+1}}$ is stable by assumption as $V_{\widebar m_{k+1}}^T(A-X_kBB^T)V_{\widebar m_{k+1}}=V_{\widebar m_{k}+\widebar \ell}^T(A-X_kBB^T)V_{\widebar m_{k}+\widebar \ell}$ for a certain $\widebar \ell\geq 0$, and the right-hand side is negative semidefinite.
 
 Rearranging equation \eqref{Newton_step_inexact}, we can write
 \begin{align}\label{eq_Theorem_conv}
\left(A-X_{k+1}BB^T\right)(X_k+Z_k)+(X_k+Z_k)\left(A-X_{k+1}BB^T\right)^T=&-C^TC-X_{k+1}BB^TX_{k+1}\notag\\
&+L_{k+1}-\frac{2-\lambda_k}{\lambda_k}(X_{k+1}-X_{k})BB^T(X_{k+1}-X_{k}),  
 \end{align}
 where we have exploited the equality $\lambda_k Z_k=X_{k+1}-X_k$.
 
 We now suppose that there exists an eigenpair $(\vartheta,z)$ of $A-X_{k+1}BB^T$ such that $\text{Re}(\vartheta)\geq 0$ and $\|z\|_F=1$. We suppose $z$ to be a left eigenvector of $A-X_{k+1}BB^T$, namely
 $$z^*(A-X_{k+1}BB^T)=\vartheta z^*.$$
  We first notice that $z\notin\text{Ker}(BB^T)$ otherwise 
 $\vartheta$ would be a point in $W(A)$ as we would have
 $$\vartheta=z^*(A-X_{k+1}BB^T)z=z^*Az,$$
  contradicting the negative definiteness of $A$.
 
 Pre and postmultiply \eqref{eq_Theorem_conv} by $z^*$ and $z$ respectively, we get
 \begin{equation}\label{eq_convergence2}
 2\cdot\text{Re}(\vartheta)z^*(X_k+Z_k)z=z^*Dz,  
 \end{equation}
 where $D:=-C^TC-X_{k+1}BB^TX_{k+1}+L_{k+1}-\frac{2-\lambda_k}{\lambda_k}(X_{k+1}-X_{k})BB^T(X_{k+1}-X_{k})$.
 The left-hand side in the above equation is nonnegative since $\text{Re}(\vartheta)\geq0$ and $X_k+Z_k$ is positive semidefinite so that $z^*Dz\geq0$. 
 Since
 $$
 z^*Dz=-z^*C^TCz+z^*L_{k+1}z-z^*X_{k+1}BB^TX_{k+1}z-\frac{2-\lambda_k}{\lambda_k}z^*(X_{k+1}-X_{k})BB^T(X_{k+1}-X_{k})z,
 $$
  the only scalar that can assume a positive value in the above expression is $z^*L_{k+1}z$ as $C^TC$, $X_{k+1}BB^TX_{k+1}$, and 
  $(X_{k+1}-X_k)BB^T(X_{k+1}-X_k)$ are all positive semidefinite and $\lambda_k\in(0,2]$. 
  
   If the set of assumptions \emph{(i)} holds, $z^*C^TCz\neq 0$ and 
  thanks to the symmetry of $L_{k+1}$ we can write
 $$z^*L_{k+1}z\leq\|L_{k+1}\|_2\leq\|L_{k+1}\|_F\leq\sigma_q^2(C)\leq 
 z^*C^TCz,$$
 where $\|\cdot\|_2$ denotes the spectral norm.

As a result, we have
 \begin{align*}
z^*Dz&=z^*(-C^TC+L_{k+1})z-z^*X_{k+1}BB^TX_{k+1}z-\frac{2-\lambda_k}{\lambda_k}z^*(X_{k+1}-X_{k})BB^T(X_{k+1}-X_{k})z\\  
&=\pi_1+\rho +\frac{2-\lambda_k}{\lambda_k}\varphi,
 \end{align*}
 and the scalars $\pi_1:=z^*(-C^TC+L_{k+1})z$, $\rho:=-z^*X_{k+1}BB^TX_{k+1}z$, $\varphi:=-z^*(X_{k+1}-X_{k})BB^T(X_{k+1}-X_{k})z$ are all nonpositive meaning they are all necessarily zero as the left-hand side in~\eqref{eq_convergence2} is nonnegative.
 
 Similarly, if the assumptions in \emph{(ii)} hold, we can write
 $$z^*L_{k+1}z\leq\|L_{k+1}\|_2\leq\|L_{k+1}\|_F\leq\sigma_p^2(B)\leq 
 z^*BB^Tz,$$
 and
 \begin{align*}
z^*Dz=&z^*(-C^TC+BB^T)z+z^*(L_{k+1}-BB^T)z-z^*X_{k+1}BB^TX_{k+1}z\\
&-\frac{2-\lambda_k}{\lambda_k}z^*(X_{k+1}-X_{k})BB^T(X_{k+1}-X_{k})z 
=\pi_2+\omega_2+\rho +\frac{2-\lambda_k}{\lambda_k}\varphi,
 \end{align*}
 and again all the scalars in the above expression are nonpositive thanks to the assumption $BB^T\leq C^TC$.
 
 In particular, in both cases we have $\varphi=-z^*(X_{k+1}-X_{k})BB^T(X_{k+1}-X_{k})z=0$, which means
 $z^*(X_{k+1}-X_{k})B= 0$ so that 
 $$z^*X_{k+1}B=z^*X_{k}B.$$

 To conclude,
 $$\vartheta z^* =z^*\left(A-X_{k+1}BB^T\right)=z^*\left(A-X_{k}BB^T\right),
 $$
 and $\vartheta$ is thus an eigenvalue of $A-X_{k}BB^T$ which contradicts the stability of $A-X_{k}BB^T$.
 
 It is easy to show that the stability of $A-X_{k+1}BB^T$ is necessarily maintained when this matrix is projected onto the current subspace, namely $V_{\widebar m_{k+1}}^T(A-X_{k+1}BB^T)V_{\widebar m_{k+1}}$ is stable since $V_{\widebar m_{k+1}}^TL_{k+1}V_{\widebar m_{k+1}}=O$ . However, to ensure the sequence $\{X_k\}_{k\geq 0}$ to be well-defined, this property has to be maintained also when the space is expanded, i.e., the matrix 
$V_{\widebar m_{k+1}+\ell}^T(A-X_{k+1}BB^T)V_{\widebar m_{k+1}+\ell}$ has to be stable for any $\ell\geq0$. 

If $Q:=V_{\widebar m_{k+1}+\ell}^T(A-X_{k+1}BB^T)V_{\widebar m_{k+1}+\ell}$,
 projecting~\eqref{eq_Theorem_conv} onto $\text{Range}(V_{\widebar m_{k+1}+\ell})$ yields
\begin{align*}
Q \text{diag}(\widetilde Y_{\widebar m_{k+1}},O_r)+\text{diag}(\widetilde Y_{\widebar m_{k+1}},O_r)Q^T=&-V_{\widebar m_{k+1}+\ell}^TC^TCV_{\widebar m_{k+1}+\ell}+V_{\widebar m_{k+1}+\ell}^TL_{k+1}V_{\widebar m_{k+1}+\ell}\\
&-\text{diag}(Y_{\widebar m_{k+1}},O_r)B_{\widebar m_{k+1}+\ell}B_{\widebar m_{k+1}+\ell}^T\text{diag}(Y_{\widebar m_{k+1}},O_r)\\
&-\frac{2-\lambda_k}{\lambda_k}(\text{diag}(Y_{\widebar m_{k+1}},O_r)-\text{diag}(Y_{\widebar m_{k}},O_{r+s}))B_{\widebar m_{k+1}+\ell}\cdot \\
&\cdot B_{\widebar m_{k+1}+\ell}^T(\text{diag}(Y_{\widebar m_{k+1}},O_r)-\text{diag}(Y_{\widebar m_{k}},O_{r+s})),
\end{align*}
where $r=\text{dim}(\text{Range}(V_{\widebar m_{k+1}+\ell}))-\text{dim}(\text{Range}(V_{\widebar m_{k+1}}))$ and 
$s=\text{dim}(\text{Range}(V_{\widebar m_{k+1}}))-\text{dim}(\text{Range}(V_{\widebar m_{k}}))$\footnote{The value of $r$ and $s$ depends on the adopted approximation space. For the extended Krylov subspace approach we have $r=2\ell q$ and $s=2q$ whereas $r=\ell q$ and $s=q$ if the rational Krylov subspace method is employed.}.
The same arguments as before demonstrate the stability of
$V_{\widebar m_{k+1}+\ell}^T(A-X_{k+1}BB^T)V_{\widebar m_{k+1}+\ell}$. Indeed, $\text{diag}(\widetilde Y_{\widebar m_{k+1}},O_r)$ is still symmetric positive semidefinite and for any eigenvector $w$ of $V_{\widebar m_{k+1}+\ell}^T(A-X_{k+1}BB^T)V_{\widebar m_{k+1}+\ell}$ associated to an eigenvalue $\omega$ with nonnegative real part, it must hold $BB^T
V_{\widebar m_{k+1}+\ell}w\neq 0$, otherwise
$$\omega=w^*V_{\widebar m_{k+1}+\ell}^T(A-X_{k+1}BB^T)V_{\widebar m_{k+1}+\ell}w=w^*V_{\widebar m_{k+1}+\ell}^TAV_{\widebar m_{k+1}+\ell}w,$$
which again contradicts the negative definiteness of $A$.

Moreover, if \emph{(i)} holds, then 
$$\text{Ker}(V_{\widebar m_{k+1}+\ell}^TC^TCV_{\widebar m_{k+1}+\ell})\subseteq\text{Ker}(V_{\widebar m_{k+1}+\ell}^TBB^TV_{\widebar m_{k+1}+\ell}),$$
and for any vector $z\in\mathbb{R}^{\text{dim}(\text{Range}(V_{\widebar m_{k+1}+\ell}))}$ such that $\|V_{\widebar m_{k+1}+\ell}z\|_F=1$, we have
 $$z^*V_{\widebar m_{k+1}+\ell}^TL_{k+1}V_{\widebar m_{k+1}+\ell}z\leq\|L_{k+1}\|_F\leq\sigma_q^2(C)\leq 
 z^*V_{\widebar m_{k+1}+\ell}^TC^TCV_{\widebar m_{k+1}+\ell}z.$$
 Similarly, if \emph{(ii)} holds, then $V_{\widebar m_{k+1}+\ell}^TBB^TV_{\widebar m_{k+1}+\ell}\leq V_{\widebar m_{k+1}+\ell}^TC^TCV_{\widebar m_{k+1}+\ell}$ and 
 $$z^*V_{\widebar m_{k+1}+\ell}^TL_{k+1}V_{\widebar m_{k+1}+\ell}z\leq\|L_{k+1}\|_F\leq\sigma_p^2(B)\leq 
 z^*V_{\widebar m_{k+1}+\ell}^TBB^TV_{\widebar m_{k+1}+\ell}z.$$

 \begin{flushright}
 $\square$
\end{flushright}
 \end{proof}

 Notice that the assumptions \emph{(i)} and \emph{(ii)} in Theorem~\ref{Theorem_conv2} are not difficult to meet in practice. For instance, in case of Riccati equations coming from balancing based model order reduction techniques for linear systems, the matrices $BB^T$, $C^TC$ are often related to the (discrete) indicating functions of two portions of the domain of interest $\Omega$: $\Omega_B$, where the controller is active, and $\Omega_C$, where we want to observe the output. Therefore, if $\Omega_B\subseteq\Omega_C$, then  $\text{Ker}(C^TC)\subseteq\text{Ker}(BB^T)$. Moreover, the condition $BB^T\leq C^TC$  can be sometimes obtained by rescaling the input and/or the output function(s) of the underlined system.
 In both cases \emph{(i)} and\emph{(ii)}, Algorithm~\ref{PNK_EK_algorithm} and \ref{PNK_RK_algorithm} can be easily modified to attain a sufficiently small $\|L_{k+1}\|_F$. This can be done by expanding the approximation space until $\|L_{k+1}\|_F\leq\min\{ \sigma_q^2(C),\eta_k\|\mathcal{R}(X_k)\|_F\}$ or $\|L_{k+1}\|_F\leq\min\{\sigma_p^2(B),\eta_k\|\mathcal{R}(X_k)\|_F\}$ depending on which condition in Theorem~\ref{Theorem_conv2} has to be fulfilled.
 
 We would like to underline the fact that, to the best of our knowledge, our routines are the only low-rank methods for the solution of large-scale Riccati equations that are guaranteed to compute a stabilizing numerical solution if the coefficient matrices $A$, $B$ and $C$ fulfill the assumptions in  
 Theorem~\ref{Theorem_conv2}.
 The other solution schemes available in the literature ensure the computation of a stabilizing solution only \emph{asymptotically}. For instance, pure projection techniques provide a stabilizing solution if the employed approximation spaces span the whole $\mathbb{R}^n$ since the computed solution coincide with the exact stabilizing solution $X$ in this case. However, this scenario is not numerically feasible. See also the discussion at the end of section~\ref{The Rational Krylov subspace}. Similarly, it can be shown that RADI \cite{Benner2018} converges to the exact stabilizing solution $X$ by exploiting its equivalence with ILRSI \cite{Lin2015}. However, this does not guarantee that the actual computed low-rank approximate solution is stabilizing in practice.

We conclude this section by showing that the exact line search we perform leads to a monotonic decrease in $\|\mathcal{R}(X_k)\|_F$.
 
 \begin{Cor}
  With the assumptions of Theorem~\ref{Theorem_conv2}, if the sequence of step sizes $\lambda_k$ is uniformly bounded from below, then the residual norms $\|\mathcal{R}(X_k)\|_F$ decrease monotonically to zero.
 \end{Cor}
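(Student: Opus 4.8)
The plan is to establish the two halves of the statement separately: first that $\{\|\mathcal{R}(X_k)\|_F\}_{k\ge0}$ is (strictly) decreasing, and then that it tends to $0$. The second half is where the hypothesis $\lambda_k\ge\lambda_{\min}>0$ really enters.

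\emph{Monotone decrease.} By Theorem~\ref{Theorem_conv2} (under its assumptions) every closed-loop matrix $A-X_kBB^T$, and every projected matrix $V_{\widebar m_k+\ell}^T(A-X_kBB^T)V_{\widebar m_k+\ell}$, is stable; hence each projected Lyapunov equation \eqref{eq.kprojected} is uniquely solvable, the quartic $p_k(\lambda)=\|\mathcal{R}(X_k+\lambda Z_k)\|_F^2$ is well defined, and so is the whole sequence $\{X_k\}$. Arranging, as described after Proposition~\ref{existence_lambda}, that $\widebar m_{k+1}>\widebar m_k$ at every step, Proposition~\ref{polynomial_coefficients_computation} gives $\gamma_k=\zeta_k=0$, so $p_k'(0)=-2\alpha_k=-2\|\mathcal{R}(X_k)\|_F^2<0$ unless $X_k$ already solves \eqref{eq.Riccati}. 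Since $\lambda_k=\argmin_{(0,2]}p_k$, for all sufficiently small $\lambda>0$ we get $p_k(\lambda_k)\le p_k(\lambda)<p_k(0)$, i.e.\ $\|\mathcal{R}(X_{k+1})\|_F<\|\mathcal{R}(X_k)\|_F$. A bounded-below decreasing sequence converges, so $\|\mathcal{R}(X_k)\|_F\to r_\infty\ge0$.

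\emph{A uniform bound on $\|Z_kBB^TZ_k\|_F$.} Write $\alpha_k,\beta_k,\delta_k,\epsilon_k$ as in \eqref{polynomial_coefficients}. Since $\widebar m_{k+1}>\widebar m_k$, the minimiser $\lambda_k$ is a critical point of $p_k$, so the derivative formula from the proof of Proposition~\ref{existence_lambda} yields
\[
4\lambda_k^3\delta_k=2(1-\lambda_k)\alpha_k-2\lambda_k\beta_k+(4\lambda_k-6\lambda_k^2)\epsilon_k .
\]
Bounding the right-hand side with $\lambda_k\in(0,2]$, $|4\lambda_k-6\lambda_k^2|\le16$, $\beta_k\le\widebar\eta^2\alpha_k$ (from \eqref{residual_step_line_search} and $\eta_k<\widebar\eta$) and the Cauchy--Schwarz estimate $|\epsilon_k|\le\sqrt{\alpha_k\delta_k}$, and using $\lambda_k\ge\lambda_{\min}$ on the left, one obtains $4\lambda_{\min}^3\delta_k\le2(1+2\widebar\eta^2)\alpha_k+16\sqrt{\alpha_k\delta_k}$. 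Solving this quadratic inequality for $\sqrt{\delta_k}$ gives $\|Z_kBB^TZ_k\|_F\le C\,\|\mathcal{R}(X_k)\|_F$ for an explicit constant $C=C(\lambda_{\min},\widebar\eta)$ (the degenerate case $Z_kB=0$ is trivial). I expect this step to be the main obstacle: the sign-indefiniteness of $\epsilon_k$ is what forces the quadratic- rather than a linear-inequality argument, and it is exactly here that ``$\lambda_k$ bounded below'' gets converted into a usable estimate; the rest is routine.

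\emph{Conclusion.} Insert this into the identity $\mathcal{R}(X_k+\lambda Z_k)=(1-\lambda)\mathcal{R}(X_k)+\lambda L_{k+1}-\lambda^2Z_kBB^TZ_k$: for $\lambda\in(0,1]$,
\[
\|\mathcal{R}(X_k+\lambda Z_k)\|_F\le\bigl(1-\lambda(1-\widebar\eta)+\lambda^2C\bigr)\|\mathcal{R}(X_k)\|_F ,
\]
which is $\le(1-\lambda\alpha)\|\mathcal{R}(X_k)\|_F$ whenever $0<\lambda\le\underline\lambda:=\min\{1,(1-\widebar\eta-\alpha)/C\}>0$. Because $\lambda_k$ minimises $p_k$ over $(0,2]\supseteq(0,\underline\lambda]$, evaluating at $\underline\lambda$ gives $\|\mathcal{R}(X_{k+1})\|_F\le(1-\underline\lambda\alpha)\|\mathcal{R}(X_k)\|_F$ with $\underline\lambda\alpha<1$ (since $\underline\lambda\le1$ and $\alpha<1-\widebar\eta<1$). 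Iterating, $\|\mathcal{R}(X_k)\|_F\le(1-\underline\lambda\alpha)^k\|\mathcal{R}(X_0)\|_F\to0$, so $r_\infty=0$ and the convergence is in fact geometric. (Without a uniform lower bound on the $\lambda_k$, the same argument still gives the monotone decrease, but $\underline\lambda$ may depend on $k$ and the decay to $0$ can no longer be guaranteed at a fixed rate.)
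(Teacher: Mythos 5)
Your proof is correct, but it takes a genuinely different route from the paper's. The paper disposes of the corollary in three lines by appealing to the convergence proof of the exact-line-search Newton iteration in \cite[Theorem 7]{Benner1998}: the only missing hypothesis there --- boundedness of the iterates $X_k$, originally obtained from controllability of $(A,B)$ --- is supplied via \cite[Lemma 2.3]{Guo1999} once stabilizability (Assumption~\ref{main_assumption}) and the stability of every $A-X_kBB^T$ (Theorem~\ref{Theorem_conv2}) are in hand. You instead give a self-contained argument whose key technical step is absent from the paper: you extract the bound $\|Z_kBB^TZ_k\|_F\leq C(\lambda_{\min},\widebar\eta)\,\|\mathcal{R}(X_k)\|_F$ directly from the stationarity condition $p_k'(\lambda_k)=0$ of the exact line search, using $\lambda_k\geq\lambda_{\min}$, the elementary bound $|4\lambda-6\lambda^2|\leq 16$ on $(0,2]$, and Cauchy--Schwarz for $\epsilon_k$, with no appeal whatsoever to boundedness of the iterates. (Invoking stationarity is legitimate: since $p_k'(0)<0$ and $p_k'(2)\geq 0$ by Proposition~\ref{existence_lambda}, any minimizer of $p_k$ over $(0,2]$ is necessarily a critical point.) Feeding this bound into the sufficient-decrease estimate and evaluating $p_k$ at the fixed point $\underline\lambda=\min\{1,(1-\widebar\eta-\alpha)/C\}$ then yields the contraction factor $1-\underline\lambda\alpha$, so you in fact prove geometric convergence of $\|\mathcal{R}(X_k)\|_F$ to zero --- slightly more than the corollary claims --- at the price of assuming, as the paper itself recommends after Proposition~\ref{existence_lambda}, that the space is always expanded so that $\widebar m_{k+1}>\widebar m_k$ and hence $\gamma_k=\zeta_k=0$. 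Your version buys self-containedness, an explicit rate, and transparent handling of the inexactness term $\beta_k=\|L_{k+1}\|_F^2$; the paper's version is shorter and leans on the established boundedness machinery instead of on the first-order optimality of the step size.
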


 \begin{proof}
  The proof directly comes from the proof of \cite[Theorem 7]{Benner1998}. In \cite[Theorem 7]{Benner1998} the authors assume $(A,B)$ to be a controllable pair to ensure the sequence of iterates $X_k$ to be bounded.
  In \cite[Lemma 2.3]{Guo1999} Guo and Laub show that $\{X_k\}_{k\geq0}$  is bounded 
  if $(A,B)$ is only stabilizable provided  $A-X_kBB^T$ is stable for all $k$. In Assumption~\ref{main_assumption} we have assumed $(A,B)$ to be stabilizable 
 while the stability of $A-X_kBB^T$ has been shown in Theorem~\ref{Theorem_conv2}.
 \begin{flushright}
 $\square$
\end{flushright}
  \end{proof}

\section{Numerical examples}\label{Numerical examples}
In this section we compare Algorithm~\ref{PNK_EK_algorithm} and \ref{PNK_RK_algorithm} with
state-of-the-art methods for the solution of large-scale algebraic Riccati equations. In particular, our new procedures are compared with the inexact Newton-Kleinman method with ADI as inner solver (iNK+ADI) \cite{Benner2016}, the Newton-Kleinman method with Galerkin acceleration (NK+GP) \cite{Benner2010}, projection methods with extended (EKSM) and rational (RKSM) Krylov subspace and RADI \cite{Benner2018}. The two variants of the Newton-Kleinman method are available in the M-M.E.S.S. package \cite{Saak2016}. A Matlab implementation of projection methods for Riccati equations can be found on the web page of Simoncini\footnote{{\tt http://www.dm.unibo.it/\textasciitilde simoncin/software.html}} while we thank Jens Saak for providing us with the RADI code\footnote{A Matlab implementation will be available in the next version of the M-M.E.S.S. package.}.

The performances of the algorithms are compared in terms of memory requirements and computational time. For the former we report the maximum number of vectors of length $n$ that need to be stored. For instance, in our framework the storage demand consists in the  dimension of the computed subspace. The same for the ``pure'' projection procedures. For the other methods, the memory requirements amount to the number of columns of the low-rank factors of all the current iterates
and we thus check this value at each step, before any low-rank truncation is performed. See \cite{Benner2016, Benner2010, Benner2018} and\cite[Table 3]{Benner2018a} for further details. We also report the rank of the computed solution, the relative residual norm achieved with such a solution and the number of (outer) iterations
performed to converge.
In iNK+ADI and in NK+GP, ADI is employed as inner solver for the Lyapunov equations stemming from the Newton scheme. We thus report also the maximum number of ADI iterations as this value is strictly related to the memory consumption of iNK+ADI and NK+GP. See \cite[Table 3]{Benner2018a}. See also, e.g., \cite{Li2004} for further details about ADI.

The RADI method, as its linear counterpart ADI, requires the computation of effective shifts. In \cite{Benner2018} several kinds of shifts $s_j$ are proposed and the performance achieved with different $s_j$'s seems to be highly problem dependent.
However, the residual Hamiltonian shifts, denoted in \cite[Section 5]{Benner2018} by ``Ham, $\ell=2p$''\footnote{In our notation it would be ``Ham, $\ell=2q$'' as $p=\text{rank}(C^T)$ in \cite{Benner2018}.}, provide very good performance 
in all the experiments reported in 
\cite{Benner2018}. We thus employ the same RADI shifts.
When ADI is used as inner solver in iNK+ADI and in NK+GP, the default setting of the M-M.E.S.S. package is used for computing the ADI shifts\footnote{{\tt opts.shifts.method='projection'}, {\tt opts.shifts.l0=max(6,size(B,2))}.}.

In \cite{Benner2016}, two values for the forcing parameter $\eta_k$ are proposed: $\eta_k=1/(1+k^3)$ and $\eta_k=\max\{0.1,0.9\cdot\|\mathcal{R}(V_mY_kV_m)\|_F\}$. These values lead to a superlinear and a quadratic convergence of the Newton scheme respectively. However, in all our numerical experiments, we notice a remarkable increment in both the computational time and the memory requirements of our new procedures when $\eta_k=\max\{0.1,0.9\cdot\|\mathcal{R}(V_mY_kV_m)\|_F\}$ is employed. Indeed, the quadratic convergence obtained is in terms of the number $k$ of Lyapunov equations we need to solve. Even though we have to solve fewer equations, each of them requires to be more accurately solved and, in general, this means that a larger subspace has to be generated and more computational efforts are thus demanded. Therefore, in all the reported experiments, $\eta_k=1/(1+k^3)$.

The tolerance for the final relative residual norm is always set to $10^{-8}$.

All results were obtained with Matlab R2017b \cite{MATLAB} on a Dell machine with 2GHz
processors and 128 GB of RAM.

\begin{num_example}\label{Ex.1}
{\rm
 In the first example we consider a matrix $A$ in \eqref{eq.Riccati} stemming from the centered finite difference discretization of the 3D Laplacian $\mathcal{L}(u)=\Delta u$ on the unit cube with zero Dirichlet boundary condition. In particular, if $T=1/(n_0-1)^2\cdot\mbox{tridiag}(1,-2,1)$ denotes the matrix representing the discrete operator associated to the 1D Laplacian, then 
 $$A=T\otimes I_{n_0}\otimes I_{n_0}+ I_{n_0}\otimes T\otimes I_{n_0}+ I_{n_0}\otimes I_{n_0}\otimes T.$$
 The matrix $A$ is thus symmetric negative definite and we can set $X_0=O$.
 Since all the methods we compare require solving many linear systems with $A$ - or a shifted $A$ - we reorder the entries of this matrix by the Matlab function {\tt amd}.
 
The low-rank matrices $B\in\RR^{n\times p}$ and $C\in\RR^{q\times n}$ have random entries that have been scaled by the mesh size $1/(n_0-1)^2$ to match the magnitude of the components of $A$. In particular,
$B=1/(n_0-1)^2\cdot\mathtt{rand}(n,p)$ where $n=n_0^3$. Similarly for $C$. 

In Table \ref{tab1} we report the results for $n=125000$ and different values of $p$ and $q$.
 
 \begin{table}[!ht]
 \centering
 \caption{Example \ref{Ex.1}. Results for different values of $p$ and $q$. \label{tab1}}
\begin{tabular}{|r| r r r r r|}
 \hline
 & It. (max inner It.)  & Mem. & rank($X$) & Rel. Res & Time (secs) \\
 \hline
 &\multicolumn{5}{c|}{{$p=q=1$}} \\
\hline
 PNK\_EK &  15 (-) & 32 &18  &4.18e-10 &10.19  \\

 PNK\_RK &  17 (-) & 19&17& 7.35e-12&28.10 \\

 EKSM  &  13 (-)  &28 & 24&4.16e-9&10.23\\

 RKSM  &  12 (-)  &14 & 12 &4.58e-9&20.26\\
 
  iNK+ADI  & 2 (8)   &61 & 13 &6.29e-9&27.85\\

    NK+GP  & 2 (32)   &106 & 25 &3.07e-15&120.72\\

 RADI  &  12 (-) &22 & 12 &4.86e-9&20.55\\

\hline 

&\multicolumn{5}{c|}{{$p=q=10$}} \\
\hline
 PNK\_EK &  14 (-)  & 300 &186 &1.36e-9 &37.62  \\

 PNK\_RK & 15 (-)  & 170&150& 1.13e-9&33.65 \\

 EKSM  & 13 (-)   &280 & 234 &5.15e-9&35.85\\

 RKSM  & 14 (-)   &160 & 140 &6.93e-9&31.13\\
 
  iNK+ADI  & 3 (17)    &1020 & 177 &6.23e-10&99.51\\

    NK+GP  & 2 (35)   &1360 &237 &3.60e-11&226.52\\

    RADI  &  14 (-)  &780 & 140 &2.98e-9&33.79\\
    
\hline

&\multicolumn{5}{c|}{{$p=10$, $q=1$}} \\
\hline
 PNK\_EK &  14 (-)   & 30 &16  &1.36e-10 &9.96  \\

 PNK\_RK & 12 (-)  & 14&12& 3.02e-9&21.46 \\

 EKSM  &  11 (-) &24 & 19&3.93e-9&9.53\\

 RKSM  &  11 (-) &13 &11 &4.62e-9&19.74\\
 
  iNK+ADI  & 3 (14)   &537 & 18 &2.45e-10&53.71\\

    NK+GP  & 2 (30)   &702 & 21 &2.06e-11&139.58\\

   RADI  &  12 (-) &40 & 12 &1.60e-9&22.91\\
 
\hline 

\end{tabular}
 \end{table}

 In this example, the iNK+ADI and the NK+GP are not very competitive in terms of both memory requirements and computational time when compared to the other methods. 
 
 The procedures based on the extended Krylov subspace, i.e., PNK\_EK and EKSM, are very fast. Indeed, they need few iterations to converge and the precomputation of the LU factors\footnote{The time for the computation of the LU factorization is included in all the reported results, also for the next examples.} of $A$ makes the linear solves very cheap. Very few iterations are needed also in PNK\_RK and RKSM but these are computationally more expensive due to the presence of different shifts in the linear systems. The gains coming from the precomputation of the LU factors are less outstanding in the case $p=q=10$ to the point that PNK\_RK and RKSM turn out to be faster than the corresponding procedures based on the extended Krylov subspace. This is mainly due to the cost of the inner solves. Indeed, the solution of the projected equation\footnote{This amounts to a Lyapunov equation in case of PNK\_EK and PNK\_RK, and a Riccati equation in case of EKSM and RKSM.} grows cubically with the space dimension and in PNK\_EK and EKSM a quite large space is constructed when $p=q=10$.
 
 The methods based on the rational Krylov subspace demand little storage and provide a very low-rank solution. This is typical also when projection methods are applied to Lyapunov equations. 
 
For all the tested values of $p$ and $q$, both PNK\_EK and PNK\_RK implicitly solve six Lyapunov equations of the Newton scheme \eqref{Newton_step}. 

 The RADI method is very competitive in terms of computational time and its performance is very similar to the ones achieved by PNK\_RK and RKSM. However, it is more memory consuming in general.
 
 For $p=q=1$, we also compare PNK\_EK with the inexact Newton-Kleinman method where each Lyapunov equations of the scheme is solved by K-PIK \cite{Simoncini2007}.
 Such a procedure is called iNK+K-PIK in the following and it solves the $(k+1)$-th Lyapunov equation \eqref{Newton_step_inexact} by projection onto the extended Krylov subspace
 $\mathbf{EK}_m^\square(A-X_kBB^T,[C^T,X_kB])$. In both PNK\_EK and iNK+K-PIK we need to solve six Lyapunov equations to achieve $\|\mathcal{R}(X_{k+1})\|_F/\|C^TC\|_F\leq10^{-8}$ and the relative residual norms produced by the two methods have a very similar trend. See Figure~\ref{Ex.1_Fig.1}. 
 
 \begin{figure}
  \centering    
  \caption{Example~\ref{Ex.1}. Relative residual norms produced by iNK+K-PIK and PNK\_EK for $p=q=1$.} 
  \label{Ex.1_Fig.1}
  	\begin{tikzpicture}
    \begin{semilogyaxis}[width=0.8\linewidth, height=.27\textheight,
      legend pos = north east,
      xlabel = $k$, ylabel = Riccati relative residual norm,xmin=0, xmax=6, ymax = 1e1]
      \addplot+[thick] table[x index=0, y index=1]  {result_Laplacian.dat};
       \addplot+ [thick]table[x index=0, y index=2]  {result_Laplacian.dat};
        \legend{iNK+K-PIK,PNK\_EK};
    \end{semilogyaxis}          
  \end{tikzpicture}
  
\end{figure}
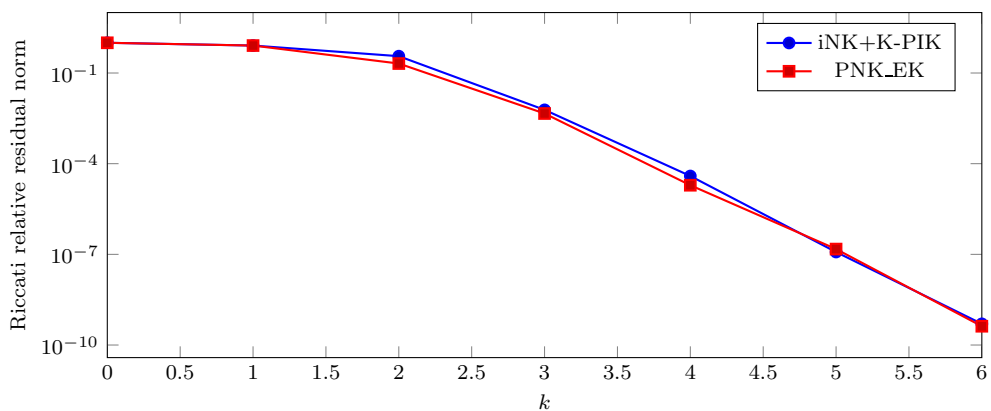

 We want to compare the dimension of the subspaces constructed by iNK+K-PIK to solve the $k+1$ equations of the Newton scheme \eqref{Newton_step_inexact} with the corresponding dimension of $\mathbf{EK}_m^\square(A,C^T)$, i.e., with $2qm=\text{dim}(\mathbf{EK}_m^\square(A,C^T))$ such that $\sqrt{2}\|\widetilde Y \underline{T}_m^TE_{m+1}\|_F\leq \eta_k\|\mathcal{R}(V_{\widebar m}Y_kV_{\widebar m})\|_F$ in Algorithm~\ref{PNK_EK_algorithm} for $k=0,\ldots,5$.
 The results are reported in Table~\ref{tab_2}. 

  \begin{table}[!ht]
 \centering
 \caption{Example \ref{Ex.1}, $p=q=1$. Comparison between the memory consumption of iNK+K-PIK and PNK\_EK. \label{tab_2}}
\begin{tabular}{l| r r r r r r }
 
 & \multicolumn{6}{c}{{$k$}}\\

 & 0 & 1 & 2& 3 & 4& 5 \\

\hline
& & & & & & \\[\dimexpr-\normalbaselineskip+2pt]
 
 $\text{dim}(\mathbf{EK}_m^\square(A-X_kBB^T, [C^T,X_kB]))$ (It.)  &  4 (2) & 8 (2) & 8 (2) & 12 (3) & 24 (6) &36 (9)\\

 $\text{dim}(\mathbf{EK}_m^\square(A,C^T))$ (It.) & 4 (2) & 6 (3) & 8 (4) & 12 (6) & 20 (10)  & 30 (15) \\
\end{tabular}
 \end{table}
 
 If $k>0$, the dimension of $\mathbf{EK}_m^\square(A-X_kBB^T,[C^T,X_kB])$ grows faster than $\text{dim}(\mathbf{EK}_m^\square(A,C^T))$ as four new basis vectors are added to the current space at each iteration instead of only two. This may lead to some redundancy in $\mathbf{EK}_m^\square(A-X_kBB^T,[C^T,X_kB])$ and, at least for this example, a smaller subspace can be constructed to achieve the same level of accuracy in the solution of the $(k+1)$-th equation. For instance, in the solution of the second Lyapunov equation ($k=1$) only one iteration of K-PIK is not sufficient to achieve the prescribed level of accuracy and a second iteration is performed 
 so that the algorithm necessarily ends up constructing a subspace of dimension 8. On the other hand, since only two basis vectors are added to $\mathbf{EK}_m^\square(A,C^T)$ at each iteration, PNK\_EK manages to realize that a space of dimension 6 contains already enough spectral information to solve the second equation. 
 Moreover, the final dimension of $\mathbf{EK}_m^\square(A,C^T)$ is much smaller than the one predicted by Corollary~\ref{Cor1}. Indeed, for this example, the latter amounts to 52 but a subspace of dimension 15 is sufficient to solve the Riccati equation.
 
 Notice that iNK+K-PIK and PNK\_EK are not comparable from a computational time perspective. Indeed, iNK+K-PIK constructs $\mathbf{EK}_m^\square(A-X_kBB^T,[C^T,X_kB])$ from scratch for all $k=0,\ldots,5$ and the computation of the last space $\mathbf{EK}_m^\square(A-X_5BB^T,[C^T,X_5B])$ is more expensive than the overall PNK\_EK procedure.

%

 We conclude this example by showing that the matrix $A-X_kBB^T$ and its projection computed by PNK\_EK are stable for all $k$ as predicted by Theorem~\ref{Theorem_conv2}. To this end we consider a smaller problem, $n=1000$, to be able to allocate the full iterates $X_k$'s. $B$ is as before with $p=1$ and $C^T=B$ so that $\text{Ker}(C^TC)=\text{Ker}(BB^T)$. For this problem setting we need eleven iterations to achieve the prescribed accuracy and six Lyapunov equations are implicitly solved. In Figure~\ref{Ex.1_Fig.2} (left) we plot the real part of the eigenvalues of $A-X_kBB^T$ for all $k=0,\ldots,6$ whereas on the right we report the scaled real part of the rightmost eigenvalue of $V_{m}^T(A-X_kBB^T)V_{m}$ for $k=0,\ldots,5$ and $m=1,\ldots,11$. As predicted by Theorem~\ref{Theorem_conv2}, all the matrices $A-X_kBB^T$ and their projected counterparts are stable. Very similar results are obtained in the case of PNK\_RK and we decide not to report them here.
 
 \begin{figure}   \caption{Example~\ref{Ex.1}. Left: Real part of the eigenvalues of $A-X_kBB^T$, $X_k$ computed by PNK\_EK ($X_0=O$), for all $k=0,\ldots,6$, $n=1000$ and $p=q=1$. The square indicates zero. Right: Scaled real part of the rightmost eigenvalue of $V_{m}^T(A-X_kBB^T)V_{m}$ for $k=0,\ldots,5$ and $m=1,\ldots,11$.} 
  \label{Ex.1_Fig.2}
	\begin{center}
	\begin{tabular}{cc}
		{	\begin{tikzpicture}
    \begin{axis}[width=0.7\linewidth, height=.27\textheight,
      legend pos = north east,
      xlabel = $\text{Re}(\lambda_j(A-X_kBB^T))$, ylabel = $k$,xmin=-0.15,xmax=0.005]
      \addplot+[mark=o, color=blue, mark options={solid}, only marks] table[x index=3, y index=0]  {result_Laplacian_eigenvalues_1.dat};
      \addplot+[mark=square, color=red] table[x index=0, y index=0]  {result_Laplacian_eigenvalues_1.dat};
      \addplot+[mark=o, color=blue, mark options={solid}, only marks] table[x index=4, y index=1]  {result_Laplacian_eigenvalues_1.dat};
      \addplot+[mark=square, color=red] table[x index=0, y index=1]  {result_Laplacian_eigenvalues_1.dat};
      \addplot+[mark=o, color=blue, mark options={solid}, only marks] table[x index=5, y index=2]  {result_Laplacian_eigenvalues_1.dat};
      \addplot+[mark=square, color=red] table[x index=0, y index=2]  {result_Laplacian_eigenvalues_1.dat};
      
      \addplot+[mark=o, color=blue, mark options={solid}, only marks] table[x index=5, y index=1]  {result_Laplacian_eigenvalues_2.dat};
      \addplot+[mark=square, color=red] table[x index=0, y index=1]  {result_Laplacian_eigenvalues_2.dat};
      
      \addplot+[mark=o, color=blue, mark options={solid}, only marks] table[x index=6, y index=2]  {result_Laplacian_eigenvalues_2.dat};
      \addplot+[mark=square, color=red] table[x index=0, y index=2]  {result_Laplacian_eigenvalues_2.dat};
       \addplot+[mark=o, color=blue, mark options={solid}, only marks] table[x index=7, y index=3]  {result_Laplacian_eigenvalues_2.dat};
       \addplot+[mark=square, color=red] table[x index=0, y index=3]  {result_Laplacian_eigenvalues_2.dat};
     
       \addplot+[mark=o, color=blue, mark options={solid}, only marks] table[x index=8, y index=4]  {result_Laplacian_eigenvalues_2.dat};
       \addplot+[mark=square, color=red] table[x index=0, y index=4]  {result_Laplacian_eigenvalues_2.dat};
     
    \end{axis}          
  \end{tikzpicture}
}		&
 		{ 
 		\setlength\tabcolsep{0.8pt}
 		\begin{tabular}{cccccccc}
 		\vspace{-7cm}
 		&&&&&&&\\
 		 &&\multicolumn{6}{c}{{$k$}}\\
 		  \multicolumn{2}{c|}{{$\times10^{-3}$}}& 0 & 1 & 2 & 3 & 4 & 5 \\
 		 \hline
 		 &\multicolumn{1}{c|}{{1}} & -3.2 &&&&&\\
 		 & \multicolumn{1}{c|}{{2}} & &-10&&&&\\
 		 & \multicolumn{1}{c|}{{3}} & &&-8.4&&&\\
 		 & \multicolumn{1}{c|}{{4}} & &&-6.1&&&\\
 		 & \multicolumn{1}{c|}{{5}} & &&&-5.9&&\\
 		$m$ & \multicolumn{1}{c|}{{6}} & &&&-5.9&&\\
 		 & \multicolumn{1}{c|}{{7}} & &&&&-5.9&\\
 		 & \multicolumn{1}{c|}{{8}} & &&&&-5.9&\\
 		 & \multicolumn{1}{c|}{{9}} & &&&&&-5.9\\
 		 & \multicolumn{1}{c|}{{10}} & &&&&&-5.9\\
 		 & \multicolumn{1}{c|}{{11}} & &&&&&-5.9
 		\end{tabular}

}\\
	\end{tabular}
	\end{center}
  
\end{figure}
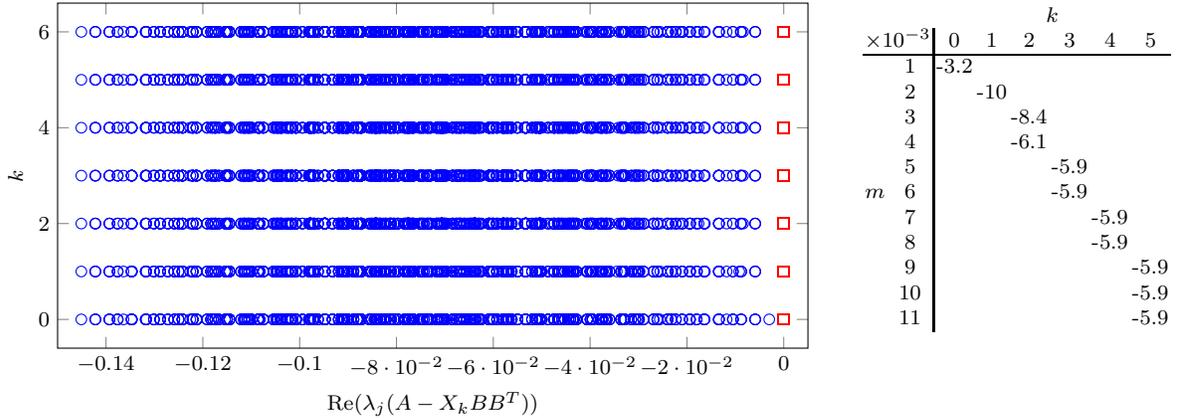

 }
\end{num_example}

\begin{num_example}\label{Ex.2}
 {\rm 
 We now consider the matrix $T\in\RR^{n\times n}$, $n=109460$, denominated {\tt lung} in the UF Sparse Matrix Collection \cite{Davis2011}. This unsymmetric matrix 
 has been used in \cite[Example 6]{Benner2018} as coefficient matrix of the Riccati equation \eqref{eq.Riccati}.
  However, $T$ is \emph{anti-stable}, i.e., the spectrum of $T$ is contained in the open right half plane. We thus consider $-T$ to our purpose. Even though $-T$ is stable, it is indefinite as 
  $\widebar \lambda:=\max_j(\lambda_j(-T-T^T)/2)>0$. Since we are not aware of any low-rank $X_0$ such that $-T-X_0BB^T<0$, we prefer to shift $-T$ and consider the negative definite matrix $A:=-T-(\widebar \lambda+1) I$. Moreover, the entries of $A$ have been reordered by means of the Matlab function {\tt symrcm}.  

  In Table \ref{tab2} we report the results for different values of $p$ and $q$.
  
 \begin{table}[!ht]
 \centering
 \caption{Example \ref{Ex.2}. Results for different values of $p$ and $q$. \label{tab2}}
\begin{tabular}{|r| r r r r r|}
 \hline
 & It. (max inner it.)  & Mem. & rank($X$) & Rel. Res & Time \\
 \hline
 &\multicolumn{5}{c|}{{$p=q=1$}} \\
\hline
 PNK\_EK & 36 (-)   & 74 &50  &9.82e-9 &3.69  \\

 PNK\_RK & 29 (-)  & 31&29& 7.62e-9&7.25 \\

 EKSM  & 36 (-)  &74 &60 &9.83e-9&3.40\\

 RKSM  & 29 (-)   &31 & 29 &9.07e-9&7.36\\
 
  iNK+ADI  & 4 (25)   &94 & 31 &8.01e-9&8.38\\

    NK+GP  & 2 (64)   &170 & 55 &4.40e-15&23.12\\

     RADI  & 33 (-)   &43 &33 &5.61e-9&5.81\\

\hline 

&\multicolumn{5}{c|}{{$p=q=10$}} \\
\hline
 PNK\_EK & 26 (-)   & 540 &369  &3.97e-11 &24.24  \\

 PNK\_RK & 28 (-)  &300&280& 2.07e-9&25.72 \\

 EKSM  &    24 (-)& 500& 433 & 5.19e-9 & 24.90\\

 RKSM  &  26 (-) &280 &260 &6.01e-9&22.53\\
 
  iNK+ADI  & 4 (30)   &1280 & 318 &5.06e-9&40.75\\
  
    NK+GP  & 2 (82)   &2300 &388 &1.08e-14&107.35\\

    RADI  & 31 (-)   &950 &310 &2.03e-9&24.54\\

\hline

&\multicolumn{5}{c|}{{$p=10$, $q=1$}} \\
\hline
 PNK\_EK & 48 (-)    & 98 &46  &1.02e-10 &5.62  \\

 PNK\_RK & 30 (-)  & 32&30& 3.72e-9&7.89 \\

 EKSM  &  33 (-)  &68 &55 &9.42e-9&3.36\\

 RKSM  & 28 (-)  &30 &28&8.80e-9&7.28\\
 
  iNK+ADI  & 4 (26)   &669 &34 &7.65e-9&24.80\\

    NK+GP  &  2 (62)  &1054 & 55 &2.69e-14&52.36\\

 RADI  &  27 (-)  &55 &27 &6.10e-9&8.16\\
    
\hline 

\end{tabular}
 \end{table}

PNK\_EK and EKSM are still among the fastest methods, especially for small $q$, and PNK\_RK, RKSM and RADI exhibit similar results, particularly in terms of computational time. For all the tested values of $p$ and $q$, both PNK\_EK and PNK\_RK implicitly solve four Lyapunov equations.
 
 We would like to underline how the computational cost of our new procedures does not really depend on $p$. More precisely, in Algorithm \ref{PNK_EK_algorithm} and \ref{PNK_RK_algorithm} we only solve $q$ linear systems per iteration, similarly to what is done in EKSM and RKSM. This does not hold for iNK+ADI, RADI and NK+GP.
 Indeed, in these methods, linear systems of the form $(A+\theta_jI+UV^T)Z=W$, $W\in\mathbb{R}^{n\times \ell}$, $U,V\in\mathbb{R}^{n\times p}$, have to be solved at each (inner) iteration. The number of columns $\ell$ of the right-hand side $W$ depends on the selected method. In particular, for iNK+ADI and NK+GP, $\ell=p+q$ so that, by employing the SMW formula, we solve $2p+q$ linear system at each inner iteration. In RADI, $W\in\mathbb{R}^{n\times q}$ and $p+q$ linear systems are solved at each iteration. See, e.g., \cite{Benner2018a} for more details.
  Therefore, if $p$ is large compared to $q$, the computational cost of iNK+ADI, RADI and NK+GP may dramatically increase while it remains almost constant in PNK\_EK and PNK\_RK.
 For instance, if we compare the performance of PNK\_RK for the cases $p=q=1$ and $p=10$, $q=1$ we obtain a similar number of iterations and basically the same computational time. On the other hand, the time of iNK+ADI and NK+GP is more than the double when $p=10$, $q=1$ compared to the case $p=q=1$. Also the computational time of RADI increases when $p=10$ and $q=1$ even though we perform fewer iterations compared to the case $p=q=1$.

 }

\end{num_example}

\section{Conclusions}\label{Conclusions}
A novel and effective approach for solving large-scale algebraic Riccati equations has been developed. The inexact Newton-Kleinman method has been combined with projection techniques that rely on timely approximation spaces like the extended and the rational Krylov subspaces. In our approach, only one approximation space is constructed as in the ``pure'' projection methods for matrix equations making our algorithm very efficient. The projected Newton-Kleinman procedures PNK\_EK and PNK\_RK perform very similarly to EKSM and RKSM respectively, in terms of both memory requirements and computational time. The numerical results show how our new algorithms are very competitive also with state-of-the-art procedures which are not based on projection.
Moreover, if $X$ is the computed solution, the stability of $A-XBB^T$ is ensured in our new framework if certain reasonable assumptions on the coefficient matrices hold.
Indeed, Theorem~\ref{Theorem_conv2} describes a large family of equations for which our schemes certainly compute a stabilizing solution whereas,
  to the best of our knowledge, no results available in the literature show that any other low-rank method for large-scale Riccati equations 
  is guaranteed to do the same. 
Furthermore, in our setting, a monotonic decrease in the Riccati residual norm has been shown thanks to the exact line search we perform.

 Both PNK\_EK and PNK\_RK can be easily modified to address the solution of generalized Riccati equations. However, preliminary results show that our routines are not competitive in terms of computational time with other state-of-the-art schemes. We believe that the performances of the projected Newton-Kleinman method applied to generalized Riccati equations can be largely improved if, e.g., nonstandard inner products are employed. This will be the topic of future works.

Another research direction is the solution of nonsymmetric Riccati equations \cite{Benner2016b,Bini2008}. Our new algorithms can be easily adapted to handle nonsymmetric problems and the solution process only require the construction of a \emph{right} and a \emph{left} subspace, in agreement with standard procedures for Sylvester equations. See, e.g., \cite[Section 4.4.1]{Simoncini2016}.


\section*{Acknowledgments}
We wish to thank Valeria Simoncini, Stefano Massei, Leonardo Robol and Patrick K\"{u}rschner for insightful conversations about earlier versions of the manuscript. Their helpful suggestions are greatly appreciated.
 We also thank Zvonimir Bujanovi\'c for certain clarifications about RADI and the two anonymous reviewers for their constructive remarks. 

 The author is a member of the Italian INdAM Research group GNCS.


\section*{Appendix}

Here we report the proof of Proposition~\ref{polynomial_coefficients_computation}.

\begin{proof}
 In this proof we only need the Arnoldi relation 
 \begin{equation}\label{Arnoldi_rel}
  AV_{\widebar m_{k+1}}=V_{\widebar m_{k+1}}T_{\widebar m_{k+1}}+\mathcal{V}_{\widebar m_{k+1}+1}E_{\widebar m_{k+1}+1}^T\underline{T}_{\widebar m_{k+1}},
 \end{equation}
 and the cyclic property of the trace operator, i.e.,
 $\mbox{trace}(AB)=\mbox{trace}(BA)$ for $A$ and $B$ matrices of conformal dimensions.

 We have
 $$\begin{array}{rll}
    \alpha_k&=&\|\mathcal{R}(X_k)\|^2_F=\|\mathcal{R}(V_{\widebar m_k}Y_{\widebar m_k}V_{\widebar m_k}^T)\|^2_F 
    \\
    
    &&\\
    
    &=& \|AV_{\widebar m_k}Y_{\widebar m_k}V_{\widebar m_k}^T+V_{\widebar m_k}Y_{\widebar m_k}V_{\widebar m_k}^TA^T-V_{\widebar m_k}Y_{\widebar m_k}V_{\widebar m_k}^TBB^TV_{\widebar m_k}Y_{\widebar m_k}V_{\widebar m_k}^T+C^TC\|_F^2 \\
    
    &&\\
    
    &=&\|V_{\widebar m_k}T_{\widebar m_k}Y_{\widebar m_k}V_{\widebar m_k}^T+V_{\widebar m_k}Y_{\widebar m_k}T_{\widebar m_k}^TV_{\widebar m_k}^T
    -V_{\widebar m_k}Y_{\widebar m_k}V_{\widebar m_k}^TBB^TV_{\widebar m_k}Y_{\widebar m_k}V_{\widebar m_k}^T+C^TC \\
    && +\mathcal{V}_{\widebar m_k+1}E_{\widebar m_k+1}^T\underline{T}_{\widebar m_k}Y_{\widebar m_k}V_{\widebar m_k}^T+V_{\widebar m_k}Y_{\widebar m_k}\underline{T}_{\widebar m_k}^TE_{\widebar m_k+1}\mathcal{V}_{\widebar m_k+1}^T\|^2_F \\
    
    &&\\
    
    &=&\|V_{\widebar m_k}\left(T_{\widebar m_k}Y_{\widebar m_k}+Y_{\widebar m_k}T_{\widebar m_k}^T
    -Y_{\widebar m_k}B_{\widebar m_k}B_{\widebar m_k}^TY_{\widebar m_k}+E_1\pmb{\gamma\gamma}^TE_1^T\right)V_{\widebar m_k}^T \\
    && +\mathcal{V}_{\widebar m_k+1}E_{\widebar m_k+1}^T\underline{T}_{\widebar m_k}Y_{\widebar m_k}V_{\widebar m_k}^T+V_{\widebar m_k}Y_{\widebar m_k}\underline{T}_{\widebar m_k}^TE_{\widebar m_k+1}\mathcal{V}_{\widebar m_k+1}^T\|^2_F. \\
    
%
    \end{array}
$$
Since $\langle \mathcal{V}_{\widebar m_k+1}, V_{\widebar m_k}\rangle_F=0$ by construction, we have

 $$\begin{array}{rll}
    \alpha_k&=& \|T_{\widebar m_k}Y_{\widebar m_k}+Y_{\widebar m_k}T_{\widebar m_k}^T
    -Y_{\widebar m_k}B_{\widebar m_k}B_{\widebar m_k}^TY_{\widebar m_k}+E_1\pmb{\gamma\gamma}^TE_1^T\|_F^2 \\
    && +\|\mathcal{V}_{\widebar m_k+1}E_{\widebar m_k+1}^T\underline{T}_{\widebar m_k}Y_{\widebar m_k}V_{\widebar m_k}^T+V_{\widebar m_k}Y_{\widebar m_k}\underline{T}_{\widebar m_k}^TE_{\widebar m_k+1}\mathcal{V}_{\widebar m_k+1}^T\|^2_F \\
    
     &&\\
&=& \|T_{\widebar m_k}Y_{\widebar m_k}+Y_{\widebar m_k}T_{\widebar m_k}^T
    -Y_{\widebar m_k}B_{\widebar m_k}B_{\widebar m_k}^TY_{\widebar m_k}+E_1\pmb{\gamma\gamma}^TE_1^T\|_F^2+2\|Y_{\widebar m_k}\underline{T}_{\widebar m_k}^TE_{\widebar m_k+1}\|^2_F. \\
    \end{array}
$$
Moreover, recalling that
 $\|L_{k+1}\|_F=\sqrt{2}\|\widetilde Y_{\widebar m_{k+1}}\underline{T}_{\widebar m_{k+1}}^TE_{\widebar m_{k+1}+1}\|_F$, it holds
$$\beta_k=\|L_{k+1}\|_F^2=2\|\widetilde Y_{\widebar m_{k+1}}\underline{T}_{\widebar m_{k+1}}^TE_{\widebar m_{k+1}+1}\|_F^2.$$
Then
$$
\begin{array}{rll}
 \gamma_k&=&\langle \mathcal{R}(X_k),L_{k+1}\rangle_F\\
 
 &&\\
 
 &=&
 \langle AV_{\widebar m_k}Y_{\widebar m_k}V_{\widebar m_k}^T+V_{\widebar m_k}Y_{\widebar m_k}V_{\widebar m_k}^TA^T
  -V_{\widebar m_k}Y_{\widebar m_k}V_{\widebar m_k}^TBB^TV_{\widebar m_k}Y_{\widebar m_k}V_{\widebar m_k}^T+C^TC,\\
  &&(A-V_{\widebar m_k}Y_{\widebar m_k}V_{\widebar m_k}^TBB^T)
  V_{\widebar m_{k+1}}\widetilde Y_{\widebar m_{k+1}}V_{\widebar m_{k+1}}^T +V_{\widebar m_{k+1}}\widetilde Y_{\widebar m_{k+1}}V_{\widebar m_{k+1}}^T(A-V_{\widebar m_k}Y_{\widebar m_k}V_{\widebar m_k}^TBB^T)^T \\
 &&+
V_{\widebar m_k}Y_{\widebar m_k}V_{\widebar m_k}^TBB^TV_{\widebar m_k}Y_{\widebar m_k}V_{\widebar m_k}^T+C^TC\rangle_F\\
 
 &&\\
 
 &=& \langle V_{\widebar m_k}\left(T_{\widebar m_k}Y_{\widebar m_k}+Y_{\widebar m_k}T_{\widebar m_k}^T
    -Y_{\widebar m_k}B_{\widebar m_k}B_{\widebar m_k}^TY_{\widebar m_k}+E_1\pmb{\gamma\gamma}^TE_1^T\right)V_{\widebar m_k}^T \\
    && +\mathcal{V}_{\widebar m_k+1}E_{\widebar m_k+1}^T\underline{T}_{\widebar m_k}Y_{\widebar m_k}V_{\widebar m_k}^T+V_{\widebar m_k}Y_{\widebar m_k}\underline{T}_{\widebar m_k}^TE_{\widebar m_k+1}\mathcal{V}_{\widebar m_k+1}^T, \\
 &&\mathcal{V}_{\widebar m_{k+1}+1}E_{\widebar m_{k+1}+1}^T\underline{T}_{\widebar m_{k+1}}
 \widetilde Y_{\widebar m_{k+1}}V_{\widebar m_{k+1}}^T
  +V_{\widebar m_{k+1}}\widetilde Y_{\widebar m_{k+1}}\underline{T}_{\widebar m_{k+1}}^TE_{\widebar m_{k+1}+1}
\mathcal{V}_{\widebar m_{k+1}+1}^T\rangle_F\\

&&\\

 &=&\langle V_{\widebar m_k}\left(T_{\widebar m_k}Y_{\widebar m_k}+Y_{\widebar m_k}T_{\widebar m_k}^T
    -Y_{\widebar m_k}B_{\widebar m_k}B_{\widebar m_k}^TY_{\widebar m_k}+E_1\pmb{\gamma\gamma}^TE_1^T\right)V_{\widebar m_k}^T,\\
 &&\mathcal{V}_{\widebar m_{k+1}+1}E_{\widebar m_{k+1}+1}^T\underline{T}_{\widebar m_{k+1}}
 \widetilde Y_{\widebar m_{k+1}}V_{\widebar m_{k+1}}^T  +V_{\widebar m_{k+1}}\widetilde Y_{\widebar m_{k+1}}\underline{T}_{\widebar m_{k+1}}^TE_{\widebar m_{k+1}+1}
\mathcal{V}_{\widebar m_{k+1}+1}^T\rangle_F\\
    &&+ \langle\mathcal{V}_{\widebar m_k+1}E_{\widebar m_k+1}^T\underline{T}_{\widebar m_k}Y_{\widebar m_k}V_{\widebar m_k}^T+V_{\widebar m_k}Y_{\widebar m_k}\underline{T}_{\widebar m_k}^TE_{\widebar m_k+1}\mathcal{V}_{\widebar m_k+1}^T, \\
 &&\mathcal{V}_{\widebar m_{k+1}+1}E_{\widebar m_{k+1}+1}^T\underline{T}_{\widebar m_{k+1}}
 \widetilde Y_{\widebar m_{k+1}}V_{\widebar m_{k+1}}^T
  +V_{\widebar m_{k+1}}\widetilde Y_{\widebar m_{k+1}}\underline{T}_{\widebar m_{k+1}}^TE_{\widebar m_{k+1}+1}
\mathcal{V}_{\widebar m_{k+1}+1}^T\rangle_F\\

%
\end{array}
$$
The first inner product in the above expression is zero due to the orthogonality of $V_{\widebar m_{k}}$ and $\mathcal{V}_{\widebar m_{k+1}+1}$. The same happens also to the second term if $\widebar m_{k+1}>\widebar m_{k}$.
If $\widebar m_{k+1}=\widebar m_{k}$ instead, we have
$$
\begin{array}{rll}
 \gamma_k &=& \langle\mathcal{V}_{\widebar m_k+1}E_{\widebar m_k+1}^T\underline{T}_{\widebar m_k}Y_{\widebar m_k}V_{\widebar m_k}^T+V_{\widebar m_k}Y_{\widebar m_k}\underline{T}_{\widebar m_k}^TE_{\widebar m_k+1}\mathcal{V}_{\widebar m_k+1}^T, \\
 &&\mathcal{V}_{\widebar m_{k+1}+1}E_{\widebar m_{k+1}+1}^T\underline{T}_{\widebar m_{k+1}}
 \widetilde Y_{\widebar m_{k+1}}V_{\widebar m_{k+1}}^T
  +V_{\widebar m_{k+1}}\widetilde Y_{\widebar m_{k+1}}\underline{T}_{\widebar m_{k+1}}^TE_{\widebar m_{k+1}+1}
\mathcal{V}_{\widebar m_{k+1}+1}^T\rangle_F\\

&&\\

&=&\langle \mathcal{V}_{\widebar m_k+1}E_{\widebar m_k+1}^T\underline{T}_{\widebar m_k}Y_{\widebar m_k}V_{\widebar m_k}^T,\mathcal{V}_{\widebar m_{k+1}+1}E_{\widebar m_{k+1}+1}^T\underline{T}_{\widebar m_{k+1}}
 \widetilde Y_{\widebar m_{k+1}}V_{\widebar m_{k+1}}^T\rangle_F \\
&&+\langle V_{\widebar m_k}Y_{\widebar m_k}\underline{T}_{\widebar m_k}^TE_{\widebar m_k+1}\mathcal{V}_{\widebar m_k+1}^T,V_{\widebar m_{k+1}}\widetilde Y_{\widebar m_{k+1}}\underline{T}_{\widebar m_{k+1}}^TE_{\widebar m_{k+1}+1}
\mathcal{V}_{\widebar m_{k+1}+1}^T\rangle_F\\
&&\\
&=&2 \langle E_{\widebar m_k+1}^T\underline{T}_{\widebar m_k}Y_{\widebar m_k}, 
 E_{\widebar m_{k+1}+1}^T\underline{T}_{\widebar m_{k+1}}
 \widetilde Y_{\widebar m_{k+1}}
\rangle_F.
\end{array}
$$
Recalling that 
$$Z_k=\widetilde X_{k+1}-X_k=V_{\widebar m_{k+1}}\widetilde Y_{\widebar m_{k+1}}V_{\widebar m_{k+1}}^T-
V_{\widebar m_{k}}Y_{\widebar m_{k}}V_{\widebar m_{k}}^T=
V_{\widebar m_{k+1}}(\widetilde Y_{\widebar m_{k+1}}-\mbox{diag}(Y_{\widebar m_{k}},O_{2q(\widebar m_{k+1}-\widebar m_{k})}))V_{\widebar m_{k+1}}^T,$$
we have 
\begin{align*}
   \delta_k=&\|Z_kBB^TZ_k\|_F^2\\
   =&\|V_{\widebar m_{k+1}}\left(\widetilde Y_{\widebar m_{k+1}}-\mbox{diag}(Y_{\widebar m_{k}},O_{2q(\widebar m_{k+1}-\widebar m_{k})})\right)V_{\widebar m_{k+1}}^TB
    B^TV_{\widebar m_{k+1}}\left(\widetilde Y_{\widebar m_{k+1}}- \mbox{diag}(Y_{\widebar m_{k}},O_{2q(\widebar m_{k+1}-\widebar m_{k})})\right)V_{\widebar m_{k+1}}^T\|^2_F \\
=&
\|\left(\widetilde Y_{\widebar m_{k+1}}-\mbox{diag}(Y_{\widebar m_{k}},O_{2q(\widebar m_{k+1}-\widebar m_{k})})\right)B_{\widebar m_{k+1}}B_{\widebar m_{k+1}}^T
\left(\widetilde Y_{\widebar m_{k+1}}-\mbox{diag}(Y_{\widebar m_{k}},O_{2q(\widebar m_{k+1}-\widebar m_{k})}\right)\|^2_F.
\end{align*}
Moreover,
$$
\begin{array}{rll}
 \epsilon_k&=&\langle \mathcal{R}(X_k),Z_kBB^TZ_k\rangle_F \\
 
 &&\\
 &=& \langle V_{\widebar m_k}\left(T_{\widebar m_k}Y_{\widebar m_k}+Y_{\widebar m_k}T_{\widebar m_k}^T
    -Y_{\widebar m_k}B_{\widebar m_k}B_{\widebar m_k}^TY_{\widebar m_k}+E_1\pmb{\gamma\gamma}^TE_1^T\right)V_{\widebar m_k}^T \\
    && +\mathcal{V}_{\widebar m_k+1}E_{\widebar m_k+1}^T\underline{T}_{\widebar m_k}Y_{\widebar m_k}V_{\widebar m_k}^T+V_{\widebar m_k}Y_{\widebar m_k}\underline{T}_{\widebar m_k}^TE_{\widebar m_k+1}\mathcal{V}_{\widebar m_k+1}^T, \\
 &&V_{\widebar m_{k+1}}\left(\widetilde Y_{\widebar m_{k+1}}-\mbox{diag}(Y_{\widebar m_{k}},O_{2q(\widebar m_{k+1}-\widebar m_{k})})\right)B_{\widebar m_{k+1}}
    B_{\widebar m_{k+1}}^T\left(\widetilde Y_{\widebar m_{k+1}}- \mbox{diag}(Y_{\widebar m_{k}},O_{2q(\widebar m_{k+1}-\widebar m_{k})})\right)V_{\widebar m_{k+1}}^T\rangle_F\\

    &&\\
 &=& \langle V_{\widebar m_k}\left(T_{\widebar m_k}Y_{\widebar m_k}+Y_{\widebar m_k}T_{\widebar m_k}^T
    -Y_{\widebar m_k}B_{\widebar m_k}B_{\widebar m_k}^TY_{\widebar m_k}+E_1\pmb{\gamma\gamma}^TE_1^T\right)V_{\widebar m_k}^T, \\
 &&V_{\widebar m_{k+1}}\left(\widetilde Y_{\widebar m_{k+1}}-\mbox{diag}(Y_{\widebar m_{k}},O_{2q(\widebar m_{k+1}-\widebar m_{k})})\right)B_{\widebar m_{k+1}}
    B_{\widebar m_{k+1}}^T\left(\widetilde Y_{\widebar m_{k+1}}- \mbox{diag}(Y_{\widebar m_{k}},O_{2q(\widebar m_{k+1}-\widebar m_{k})})\right)V_{\widebar m_{k+1}}^T\rangle_F\\      
    && +\langle \mathcal{V}_{\widebar m_k+1}E_{\widebar m_k+1}^T\underline{T}_{\widebar m_k}Y_{\widebar m_k}V_{\widebar m_k}^T+V_{\widebar m_k}Y_{\widebar m_k}\underline{T}_{\widebar m_k}^TE_{\widebar m_k+1}\mathcal{V}_{\widebar m_k+1}^T, \\
 &&V_{\widebar m_{k+1}}\left(\widetilde Y_{\widebar m_{k+1}}-\mbox{diag}(Y_{\widebar m_{k}},O_{2q(\widebar m_{k+1}-\widebar m_{k})})\right)B_{\widebar m_{k+1}}
    B_{\widebar m_{k+1}}^T\left(\widetilde Y_{\widebar m_{k+1}}- \mbox{diag}(Y_{\widebar m_{k}},O_{2q(\widebar m_{k+1}-\widebar m_{k})})\right)V_{\widebar m_{k+1}}^T\rangle_F.\\
\end{array}
$$
If $\widebar m_{k+1}=\widebar m_{k}$, the second inner product above is zero while the first can be written as $\langle T_{\widebar m_k}Y_{\widebar m_k}+Y_{\widebar m_k}T_{\widebar m_k}^T
    -Y_{\widebar m_k}B_{\widebar m_k}B_{\widebar m_k}^TY_{\widebar m_k}+E_1\pmb{\gamma\gamma}^TE_1^T, (\widetilde Y_{\widebar m_{k+1}}-Y_{\widebar m_{k}})B_{\widebar m_{k+1}}
    B_{\widebar m_{k+1}}^T(\widetilde Y_{\widebar m_{k+1}}-Y_{\widebar m_{k}})\rangle_F$.
If $\widebar m_{k+1}>\widebar m_{k}$, also the second term in the above expression must be taken into account and this can be written as 
\begin{multline*}
 \langle V_{\widebar m_k+1}\left(E_{\widebar m_k+1}E_{\widebar m_k+1}^T\underline{T}_{\widebar m_k}[Y_{\widebar m_k},O_{2q\widebar m_k\times 2q}]+[Y_{\widebar m_k};O_{2q\times 2q\widebar m_k}]\underline{T}_{\widebar m_k}^TE_{\widebar m_k+1}E_{\widebar m_k+1}^T\right)V_{\widebar m_k+1}^T, \\
 V_{\widebar m_{k+1}}\left(\widetilde Y_{\widebar m_{k+1}}-\mbox{diag}(Y_{\widebar m_{k}},O_{2q(\widebar m_{k+1}-\widebar m_{k})})\right)B_{\widebar m_{k+1}}
    B_{\widebar m_{k+1}}^T\left(\widetilde Y_{\widebar m_{k+1}}- \mbox{diag}(Y_{\widebar m_{k}},O_{2q(\widebar m_{k+1}-\widebar m_{k})})\right)V_{\widebar m_{k+1}}^T\rangle_F,
\end{multline*}
that is
{\small 
\begin{multline*}
 \langle E_{\widebar m_k+1}E_{\widebar m_k+1}^T\underline{T}_{\widebar m_k}[Y_{\widebar m_k},O_{2q\widebar m_k\times 2q}]+[Y_{\widebar m_k};O_{2q\times 2q\widebar m_k}]\underline{T}_{\widebar m_k}^TE_{\widebar m_k+1}E_{\widebar m_k+1}^T, \\
 V_{\widebar m_k+1}^TV_{\widebar m_{k+1}}\left(\widetilde Y_{\widebar m_{k+1}}-\mbox{diag}(Y_{\widebar m_{k}},O_{2q(\widebar m_{k+1}-\widebar m_{k})})\right)B_{\widebar m_{k+1}}
    B_{\widebar m_{k+1}}^T\left(\widetilde Y_{\widebar m_{k+1}}- \mbox{diag}(Y_{\widebar m_{k}},O_{2q(\widebar m_{k+1}-\widebar m_{k})})\right)V_{\widebar m_{k+1}}^TV_{\widebar m_k+1}\rangle_F.
\end{multline*}
}
Since 
$$V_{\widebar m_k}^TV_{\widebar m_{k+1}}=[I_{2q\widebar m_k},O_{2q\widebar m_k\times 2q(\widebar m_{k+1}-\widebar m_k)}],$$ 
and
$$
V_{\widebar m_k+1}^TV_{\widebar m_{k+1}}=[I_{2q(\widebar m_k+1)},O_{2q(\widebar m_k+1)\times 2q(\widebar m_{k+1}-\widebar m_k-1)}],$$
we get the result for $\epsilon_k$.

To conclude,
$$
\begin{array}{rll}
 \zeta_k&=&\langle L_{k+1}, Z_kBB^TZ_k\rangle_F\\

 &&\\
 
 &=& \langle \mathcal{V}_{\widebar m_{k+1}+1}E_{\widebar m_{k+1}+1}^T\underline{T}_{\widebar m_{k+1}}\widetilde Y_{\widebar m_{k+1}}V_{\widebar m_{k+1}}^T+V_{\widebar m_{k+1}}\widetilde Y_{\widebar m_{k+1}}\underline{T}_{\widebar m_{k+1}}^TE_{\widebar m_{k+1}+1}\mathcal{V}_{\widebar m_{k+1}+1}^T, \\
 &&V_{\widebar m_{k+1}}\left(\widetilde Y_{\widebar m_{k+1}}-\mbox{diag}(Y_{\widebar m_{k}},O_{2q(\widebar m_{k+1}-\widebar m_{k})})\right)B_{\widebar m_{k+1}}
    B_{\widebar m_{k+1}}^T\left(\widetilde Y_{\widebar m_{k+1}}- \mbox{diag}(Y_{\widebar m_{k}},O_{2q(\widebar m_{k+1}-\widebar m_{k})})\right)V_{\widebar m_{k+1}}^T\rangle_F\\
&&\\

 &=&0,\\
\end{array}
$$
since $\mathcal{V}_{\widebar m_{k+1}+1}^TV_{\widebar m_{k+1}}=O_{2q\times 2q\widebar m_{k+1}}$.
\begin{flushright}
 $\square$
\end{flushright}
\end{proof}

\bibliography{LowRankRiccati}

\end{document}